\theoremstyle{plain}
\newtheorem{theorem}{Theorem}
\newtheorem{corollary}[theorem]{Corollary}
\newtheorem{lemma}[theorem]{Lemma}
\newtheorem*{conjecture}{Conjecture}
\newtheorem{proposition}[theorem]{Proposition}
\theoremstyle{definition}
\newtheorem{definition}[theorem]{Definition}
\newtheorem{example}[theorem]{Example}
\newtheorem{remark}[theorem]{Remark}
\let\orgdescriptionlabel\descriptionlabel
\renewcommand*{\descriptionlabel}[1]{%
  \let\orglabel\label
  \let\label\@gobble
  \phantomsection
  \edef\@currentlabel{#1}%
  \let\label\orglabel
  \orgdescriptionlabel{#1}%
}
\def\e{\varepsilon}
\newcommand{\Ze}{\mathbb{Z}_{e}^2}
\newcommand{\Zo}{\mathbb{Z}_{o}^2}
\newcommand{\R}{\mathbb{R}}
\newcommand{\I}{\mathcal{I}}
\newcommand{\Z}{\mathbb{Z}}
\newcommand{\N}{\mathbb{N}}
\newcommand{\A}{\mathcal{A}}
\newcommand{\D}{\mathcal{D}}
\newcommand{\mN}{\mathcal{N}}
\newcommand{\mP}{\mathcal{P}}
\newcommand{\weakcs}{\overset{*}{\rightharpoonup}}
\newcommand{\jv}{{\bf j}}
\newcommand{\x}{{\bf x}}
\newcommand{\y}{{\bf y}}
\newcommand{\iv}{{\bf i}}
\newcommand{\ev}{{\bf e}}
\DeclareMathOperator{\conv}{{\rm conv}} 
\DeclareMathOperator{\sgn}{{\rm sgn}}
\DeclareMathOperator{\supp}{{\rm supp}}
\newcommand{\argmin}[1]{\underset{#1}{\operatorname{argmin}}\;}
\newcommand{\argmax}[1]{\underset{#1}{\operatorname{argmax}}\;}
\newcommand{\mres}{\mathbin{\vrule height 1.6ex depth 0pt width
0.13ex\vrule height 0.13ex depth 0pt width 1.3ex}}
\numberwithin{equation}{section} 
\begin{document}

\title{Nucleation and growth of lattice crystals}

\author{Andrea Braides, Giovanni Scilla and Antonio Tribuzio}
\newcommand{\Addresses}{{
  \bigskip
  \footnotesize

A.~Braides, \textsc{Dipartimento di Matematica, Universit\`a di Roma ``Tor Vergata'', Via della Ricerca Scientifica 1, 00133 Roma, Italy}\par\nopagebreak
  \textit{E-mail address}, A.~Braides: \texttt{braides@mat.uniroma2.it}

\medskip

G.~Scilla, \textsc{Dipartimento di Matematica ed Applicazioni ``R. Caccioppoli'' , Universit\`{a} di Napoli Federico II, Via Cintia Monte Sant'Angelo, 80126 Napoli, Italy}\par\nopagebreak
  \textit{E-mail address}, G.~Scilla: \texttt{giovanni.scilla@unina.it}

\medskip

A.~Tribuzio, \textsc{Dipartimento di Matematica, Universit\`a di Roma ``Tor Vergata'', Via della Ricerca Scientifica 1, 00133 Roma, Italy}\par\nopagebreak
  \textit{E-mail address}, A.~Tribuzio: \texttt{tribuzio@mat.uniroma2.it}
}}

\date{}

\maketitle

\begin{abstract}\noindent
{A variational lattice model is proposed} to define an evolution of sets from a single point (nucleation)
following a criterion of ``maximization'' of the perimeter.
At a discrete level, the evolution has a ``checkerboard'' structure and its shape is affected by the choice of the norm defining the dissipation term.
For every choice of the scales, the convergence of the discrete scheme to a family of expanding sets with constant velocity is proved.
\end{abstract}
\noindent
{\bf Keywords:} discrete systems, nucleation, minimizing movements, geometric evolution, pinning, microstructure \\
\noindent
{\bf MSC(2010):} 35B27, 74Q10, 53C44, 49M25, 49J45.


\section{Introduction}

In this paper we propose a variational model for nucleation and growth of a set by {\em maximization} of its perimeter through an energy-dissipation balance at fixed time step. We follow an implicit Euler scheme used by Almgren, Taylor and Wang to prove existence of sets moving by mean curvature by {\em minimization} of the perimeter (see \cite{ATW83}). In that case, fixed a time step $\tau>0$, one can define iteratively the discrete orbits $E^\tau_k$ at fixed $\tau$ from an initial set $E_0$  as $E^\tau_0=E_0$ and $E^\tau_k$ as a solution of
\begin{equation}\label{minor}
\min\Bigl\{P(E)+{1\over \tau} D^p(E, E^\tau_{k-1})\Bigr\},\qquad D^p(E, F)=\int_{E\triangle F} {\rm dist}_p(x,\partial F)\,dx,
\end{equation}
where ${\rm dist}_p(x,\partial E)=\min\{\|x-y\|_p: y\in\partial E\}$, $p\in [1,\infty]$. The term $D^p$ is interpreted as a {\em dissipation}, and \eqref{minor} can be seen as a minimization of $P$ subject to a constraint due to the dissipation, which forces $E^\tau_k$ to be close to $E^\tau_{k-1}$ for $\tau$ small.
In \cite{ATW83} it is proved (in the case $p=2$) that the piecewise-constant interpolations  $E^\tau(t)=E^\tau_{\lfloor t/\tau\rfloor}$ converge to a decreasing family of sets $E(t)$ which move by mean curvature.

Such a scheme cannot be directly followed taking maximization of the perimeter as a driving mechanism, which would correspond to replacing $P$ with $-P$.
Indeed, we may have sets $E$ such that $E\triangle E_0$ has small measure (and hence with small dissipation) but with arbitrarily large perimeter, so that the minimum value for $k=1$ in \eqref{minor} is $-\infty$ and the scheme arrests at the first step.
In order to overcome this issue, we discretize our problem by introducing a spatial length scale $\e$.
For technical reasons explained below, we will examine only a two-dimensional setting, and for simplicity parameterize our problem on the lattices $\e\Z^2$.
We then restrict to sets that can be written as the union of squares of side length $\e$ and centers in $\e\Z^2$.
Within this class we shall consider the problem of {\em nucleation}; i.e., of motion from a minimal set, a single $\e$-square $E^\e_0$ (which we may suppose to be centered in $0$).
With fixed $\e$ and $\tau$, the discrete orbits are defined as $E^{\e,\tau}_0=E^\e_0$ and $E^{\e,\tau}_k$ as a solution of
\begin{equation}
\min\Bigl\{-P_\e(E)+{1\over \tau} D_\e^p(E, E^{\e,\tau}_{k-1}): E^{\e,\tau}_{k-1}\subseteq E\Bigr\},
\end{equation}
where $P_\e$ is the restriction of the perimeter functional to unions of $\e$-squares, and $D_\e^p$ is a discretization of the dissipation $D^p$ which, for every $E\supseteq F$, reduces to
$$
D^p_\e(E,F)=\e^2\sum_{i\in E\cap\e Z^2}{\rm dist}_p(i, (\e \Z^2\setminus F))\,.
$$

Note that we consider a {\em growing} family of sets with respect to inclusion.
With fixed $\tau=\tau_\e$  we will characterize  the cluster points $E(t)$ as $\e\to 0$ of the interpolated functions $E^\e(t)=E^{\e,\tau}_{\lfloor \e t/\tau\rfloor}$, which are the generalization to varying energies of the Almgren-Taylor-Wang scheme scaled in the time variable. Note the different scaling of the time variable, which is the one that better describes the evolution.
The  form of $E$ will depend on the interplay between $\e$ and $\tau$; more precisely,  
on the limit ratio $\alpha$ of $\e^2/\tau$ as $\e\to 0$. We remark that the chosen time scaling can be directly interpreted as giving
the {\em minimizing movements along the sequence $-\e P_\e$ at scale $\tau$}, which are defined in \cite{Bra13}. 
This scaling is also justified by the fact that the energies $-\e P_\e$ have a non-trivial $\Gamma$-limit.

We describe the case $0<\alpha<+\infty$, which is the most relevant. It is not restrictive to suppose that $\alpha\tau=\e^2$.
By the homogeneity properties of the perimeter and the dissipation, we note that $E^{\e,\tau}_k=\e A^\alpha_k$, where  $A^\alpha_0=q$ (the unit square centered in $0$), and we solve iteratively 
\begin{equation}\label{equa}
\min\Bigl\{-P_1(A)+{\alpha} D_1^p(A, A^{\alpha}_{k-1}): A^{\alpha}_{k-1}\subseteq A\Bigr\}.
\end{equation}

The first step is particularly meaningful, and consists in solving the minimum problem
\begin{equation}\label{equa}
\min\Bigl\{-P_1(A)+{\alpha} D_1^p(A, q): q\subseteq A\Bigr\}.
\end{equation}
We have

$\bullet$ the first set $A^\alpha_1$ is a part of the checkerboard of unit squares in $\R^2$ containing $0$ (which we call the {\em even checkerboard}). While this fact is clear `locally', the proof that the whole set is a single checkerboard requires a non-trivial covering argument, in which $\R^2$ is covered by sets in which the minimal set $A$ is (part of) the correct checkerboard. This argument can be avoided in the case $p=\infty$, which has been treated directly in \cite{BraSci14};

$\bullet$ since every square of the (even) checkerboard gives an independent contribution of energy and dissipation, a point $i\in\Z^2$ may belong to $A^\alpha_1$ if and only if ($i_1+i_2\in 2\Z$ and) the corresponding contribution is non positive; i.e.,
\begin{equation}\label{equap}
-4+\alpha\|i\|_p\le 0;
\end{equation}

$\bullet$ if $\alpha\not\in\{ 4/\|i\|_p: i\in\Z^2, i_1+i_2\in 2\Z\}$ then $A^\alpha_1$ is uniquely determined by \eqref{equap}, and it is the union of all squares in the even checkerboard with centers in the set $$\mathcal{N}^p_\alpha=\{i\in\Z^2\cap B^p_{4/\alpha}: i_1+i_2\in 2\Z\},
$$ where $B_r^p=\{x\in\R^2: \|x\|_p<r\}$. Note that $\mathcal{N}^p_\alpha=\{0\}$ if $\alpha>4$;

We consider only $\alpha$ with such a unique minimizer. The subset $\mathcal{N}^p_\alpha$ of $\Z^2$ will be called the {\em nucleus} of the process. Correspondingly, we have the continuum set $P^p_\alpha$ obtained as the convexification of $N^p_\alpha$. Note that $P^1_\alpha$ and $P^\infty_\alpha$ are always squares, but for the other $p$ the form of $P^p_\alpha$ does depend on $\alpha$. 

The most delicate argument in the study of the discrete scheme is the characterization of the sets $A^\alpha_k$ for $k>1$. Similarly to the case $k=1$ this is done by covering $\R^2$ with a family of small sets, mainly squares and rectangles, in each of which we prove that the minimal set is again the even checkerboard. In order to construct this covering we have to define the `edges' of the nucleus $\mathcal{N}^p_\alpha$, and consider separately the regions of $\R^2$ that project on those edges according to the $p$-distance. At this point we have a technical hypothesis to add; namely, that all such regions are infinite (which is satisfied if these edges enclose a convex shape but may not be the case for some exceptional values of $\alpha$). The complex construction of this covering is the reason why we limit our analysis to a two-dimensional setting.

With this characterization, using \eqref{equap} we immediately have that the centers of the squares in $A^\alpha_k$ are exactly the points $i\in\Z^2$ with $i_1+i_2\in2\Z$ and distance not greater than $4/\alpha$ from $A^\alpha_{k-1}$, so that
$$
A^\alpha_k\cap\Z^2=(A^\alpha_{k-1}\cap\Z^2)+ (A^\alpha_1\cap\Z^2).
$$
In a sense, every square in $A^\alpha_{k-1}$ acts as the `center' of a nucleus. Note in this step that if $A^\alpha_1$ were not unique, then we would have an `increasing non-uniqueness' of $A^\alpha_k$, which in particular may even not be the intersection of the square checkerboard with a convex region. 

Since the centers of the squares in $A^\alpha_k$ are obtained as sums of $k$ elements in $\mathcal{N}^p_\alpha$, a result on {\em Minkowsky sums} of sets shows then that the convex envelope of $A^\alpha_k\cap\Z^2$ is the convex envelope of $k \mathcal{N}^p_\alpha$, which is an interesting and not a trivial fact. At this point we can go back to the original problem and describe the discrete orbits.
$$
E^{\e,\tau}_k=\e A^\alpha_k= \e k P^p_\alpha,\qquad E^\e(t)=E^{\e,\tau}_{\lfloor\e t/\tau\rfloor}=\e \Bigl\lfloor {\alpha\over\e}\Bigr\rfloor P^p_\alpha.
$$
Letting $\e\to 0$ we then conclude that the desired evolution is a linear evolution of sets
$$
E(t)=\alpha t P^p_\alpha.
$$
Note that $P^p_\alpha=\{0\}$ and hence the evolution is {\em pinned} if $\alpha>4$. Moreover, remarking that  $\alpha P^p_\alpha\sim B^p_{4}$ for $\alpha$ small, we also recover the case $\alpha=0$, corresponding to the regime $\e^2<\!<\tau$, for which $E(t)= 4t B^p_1$.

\smallskip
We note that in \cite{BGN} the same discretization approach had been followed for the (positive) perimeter and non-trivial initial data. The resulting evolution therein is a discretized motion by square-crystalline curvature (see \cite{AT95}), which highlights the anisotropy of the lattice intervening in the perimeter part, while the effect of the dissipation is confined in the form of the mobility. In the present analysis the effect of the dissipation and of the perimeter parts are combined in the determination of the shape of the nucleus, but the perimeter term actually acts as an approximation of an area and is less relevant for small values of $\alpha$.  Note that our discretization approach can be regarded as a `backward' version of \cite{BGN} if the index $k$ is considered as parameterizing negative time (see \cite[Section 10.2]{Bra13}). Other analyses of minimizing movements on lattices related to the perimeter can be found in \cite{BraSci,Sci14,Ruf,Sci20}. We note that checkerboard, stripes and other structures arise in antiferromagnetic systems related to maximization of the perimeter (see \cite{Braides-Cicalese15} for a variational analysis in terms of $\Gamma$-convergence, and the wide literature in Statistical Mechanics, e.g.~\cite{Giuliani, Daneri}). Some cases in which microstructures on lattices are involved and produce interesting variants of motion by crystalline curvature are studied in \cite{BCY,BraSo15}. For an overview on geometric motion on planar lattices see the recent lecture notes \cite{BraSolNotes}.

Even though our interest is mainly in the analytical issues of this nucleation process, it is suggestive and interesting to connect this work with the process of biomineralization, where nucleation occurs via the formation of a small nucleus of a new phase inside the large volume of the old phase (see, e.g., \cite{DeV}). At very small size, adding even one more molecule increases the free energy of the system and this produces, on average, the dissolution of the nucleus. Above a threshold, when the contribution of the surface free energy becomes negligible, every addition of a molecule to the lattice lowers the free energy and allows for the growth of the nucleus. In this direction, lattice systems have been widely used as a simple model in simulations of complex phenomena, as the vapor-liquid nucleation (see, e.g., \cite[Section~8.9]{Kali}). From a completely different point of view, our structure results can be related to the investigation of the influences of environmental heterogeneities on the spatial self-organization of microbial communities (see, e.g., \cite{Ciccarese, MSM}); in particular, how interactions of different type (mutualism/commensalism) between competing neighboring genotypes and their mutual distance can produce spatial patterns of varying complexity and intermixing, as a random distribution, a spatial segregation or even a checkerboard, and how they may affect the collective behaviour and the rate of growth of the colony.

{\bf Outline of the paper.} 
In Section \ref{sec:notation} we fix some notation and recall some preliminaries in Discrete Geometry. We introduce the class of admissible sets that we will consider throughout the paper, and the notions of \emph{effective boundary} and \emph{discrete edge} of a set.  
In Section~\ref{sec:setting} we define perimeter energies $P_\varepsilon$ and, \emph{for a general norm} $\varphi$, dissipations $D_\varepsilon^\varphi$ we will deal with, together with the main functional $\mathcal{F}_{\e,\tau}^\varphi$. Correspondingly, we introduce the time-discrete minimization scheme for  a suitably scaled version of the energies $\mathcal{F}_{\e,\tau}^\varphi$ (Section~\ref{sec:choicescal}). 

The convergence analysis of this scheme at the regime $\varepsilon<\!<\!\tau$ is carried out in Section~\ref{sec:fastconvergence}. 
In Section~\ref{sec:scaledcheckbd} we address the problem of determining the solutions of scheme \eqref{equa} at the critical regime $\varepsilon=\alpha\tau$, under a monotonicity constraint on the discrete trajectories. 
We introduce here also a first restriction on the dissipations $D_\varepsilon^\varphi$, by requiring that $\varphi$ be an \emph{absolute norm}; i.e., $\varphi(\x)=\varphi(|x_1|,|x_2|)$. 
The explicit characterization of the first step $A^1_\alpha$ of the discrete evolution, provided with Proposition~\ref{firststep}, is based on a local analysis by means of the $2\times2$-square tilings introduced in Section~\ref{sec:bigsquares} and the key submodularity-type norm-inequality \eqref{normestimate}. 
In order to prove that an analogous structure result can be obtained for each step $A_\alpha^k$, $k\geq2$; i.e., for minimizers of the energy $\mathcal{F}_\alpha^\varphi(\cdot,A_\alpha^{k-1})$, we will assume that $\varphi$ is a \emph{symmetric} absolute \emph{normalized} norm (see Section~\ref{sec:norm}), complying with a technical assumption \ref{norm-derivative}, and that the competitors fulfill suitable geometric assumptions (see \eqref{monotone-edges}). 
The proof of this stability result, given with Propositon~\ref{steps}, is the content of Section~\ref{sec:proofsteps} and relies on a localization argument only reminiscent of that used in the proof of Proposition~\ref{firststep}, 
as we are forced to define a new covering outside every discrete edge 
contained in the effective discrete boundary of the current step $A_\alpha^{k-1}$. 
In Section~\ref{sec:nucleation}, with Theorem~\ref{thm:nucleation} we characterize the time-discrete flow $\{A_\alpha^k\}_{k\geq0}$ as a geometric iterative process, based on properties of Minkowski sums. 

In Section~\ref{sec:limitmotion} we describe the resulting limit evolutions and we prove the existence of a pinning threshold (see Definition~\ref{pin-def}). 
We conclude our analysis by exhibiting, in Section~\ref{sec:examples}, some examples where both the microscopic and the limit evolutions can be explicitly characterized. 
The closing Section~\ref{sec:conjectures} contains some conjectures on evolutions \emph{without the monotonicity constraint}. 

\section{Notation and preliminaries}\label{sec:notation}

 The generic point of $\R^2$ will be denoted by $\x=(x_1,x_2)$,  the Euclidean norm by $|\cdot|$ in any dimension.
 The space of subsets of $\R^2$ with finite perimeter endowed with the Hausdorff distance $d_{\mathcal{H}}$ is denoted by $\mathcal{X}$, and  the 1-dimensional Hausdorff measure by $\mathcal{H}^1$ (see for instance \cite{AFP}).

The function $\varphi:\R^2\to[0,+\infty)$ denotes any norm in the plane.
We use the standard notation for the $\ell^p$-\emph{norm}, for every $1\le p\le\infty$; that is,
$$
\|\x\|_p = \big(|x_1|^p+|x_2|^p\big)^{1\over p} \text{ if } 1\le p<\infty,
\quad
\|\x\|_\infty = \max\{|x_1|,|x_2|\} \text{ if } p=\infty,
$$
for every $\x\in\R^2$.
For every $r>0$, $B_r^\varphi(\x)=\{\y\in\R^2 \,:\, \varphi(\x-\y)<r\}$ is the open ball of radius $r$ and center $\x$ corresponding to the norm $\varphi$, while  $q_r(\x)=\x+[-r/2,r/2]^2$ is the \emph{$r$-square} of side-length $r$ centered at $\x$; when $\x=(0,0)$, we will use the shorthand $B_r^\varphi$ and $q_r$ in place of $B_r^\varphi(\x)$ and $q_r(\x)$, respectively.
For every $\x\in\R^2$, $E\subseteq\R^2$ we set $d^\varphi(\x,E)=\inf_{\y\in E} \varphi(\x-\y)$.
The segment connecting $\x_1,\x_2\in\R^2$ is denoted by  $[\x_1,\x_2]:=\big\{\y\in\R^2\,:\, \y=s\x_1+(1-s)\x_2,\, s\in[0,1]\big\}$.

\begin{definition}\label{angle}
Given two unit vectors ${\bf v}_1, {\bf v}_2\in\mathbb{S}^1$,  $\theta({\bf v}_2,{\bf v}_1)\in[-\pi,\pi]$ denotes the \emph{signed angle between ${\bf v}_1$ and ${\bf v}_2$}, defined as
$$
\theta({\bf v}_2,{\bf v}_1)=\big(\theta_2-\theta_1+\pi\, (\text{mod. } 2\pi)\big) - \pi,
$$
where $\theta_1$ and $\theta_2$ are the angles corresponding to the exponential representations of ${\bf v}_1$ and ${\bf v}_2$, respectively.
\end{definition}

Let $\Z^2$ be the standard square lattice. We consider the partition of $\Z^2$ given by $\Z^2=\Ze\cup\Zo$, where $\Ze=\big\{\iv\in\Z^2 \,:\, i_1+i_2\in 2\Z\big\}$ and $\Zo=(1,0)+\Ze$.

We will call a \emph{lattice set} any subset $\I\subseteq\Z^2$, and  $\#\I$ denotes its cardinality.
We also recall that the \emph{boundary} of a lattice set $\I$ is the set
$$\partial \I = \big\{\iv\in\I \,| \text{ there exists } \jv\in\Z^2\setminus\I \,:\, |\iv-\jv|=1 \big\}.$$
Given a lattice set $\I$, the \emph{convex hull} of $\I$ is the smallest convex subset of $\R^2$ containing $\I$, which is denoted by $\conv(\I)$.
A polygon whose vertices are points of the lattice is said a \emph{lattice polygon}.
The set $\conv(\I)$ is an example of a (convex) lattice polygon, for every $\I\subset\Z^2$.

Let $\e>0$ be a fixed parameter and consider the lattice $\e\Z^2$.
All the notation given above for subsets of $\Z^2$ extends also to subsets of $\e\Z^2$.
We identify any lattice set $\I\subset\e\Z^2$ with the subset $E(\I)$ of $\R^2$ given by the union of $\e$-squares centered at points of $\I$; namely,
$$E(\I):=\bigcup_{\iv\in \I}q_\e(\iv).$$
Accordingly, we define the class of \emph{admissible sets} as
\begin{equation}\label{unions}
\D_\e := \big\{E\subset\R^2 \,:\, E=E(\I) \mbox{ for some lattice set } \I\subseteq\e\Z^2\big\},
\end{equation}
and to each set $E\in\D_\e$ we associate the lattice set $Z_\e(E):=E\cap\e\Z^2$, the \emph{set of centers} of $E$.
When $\e=1$ we will simply write $\D$ and $Z(E)$ in place of $\D_1$ and $Z_1(E)$, respectively.

\begin{definition}[the classes of \emph{checkerboard sets}]\label{def:check}
We introduce the classes of \emph{even} and \emph{odd $\e$-checkerboard sets}
\begin{equation}\label{class1}
\A_\e^e = \left\{E\in\D_\e \,:\, Z_\e(E)\subseteq\e\Ze \right\},
\end{equation}
and analogously the class $\A_\e^o$ by requiring that $\I\subseteq\e\Zo$.
We refer to $E(\e\Ze)$ and $E(\e\Zo)$ as the \emph{even} and \emph{odd} $\e$-\emph{checkerboard}, respectively.
In the following we will write $\D$, $\A^e$, $\A^o$ in place of $\D_1$, $\A_1^e$, $\A_1^o$, and we will use the shorthand \emph{checkerboard set} (in place of ``$1$-checkerboard set'') to denote any set in $\A^e$ and $\A^o$.
\end{definition}

\subsection{Preliminaries on lattice geometry}\label{sec:lattice-convex}

For our purposes we fix some notation and introduce some basic definitions in lattice geometry that will be useful for the analysis performed in Subsection \ref{sec:step2}.

\begin{definition}\label{lattice-convex-def}
A lattice set $\I\subseteq\Ze$ is said to be $\Ze$-\emph{convex} if $\conv(\I)\cap\Ze=\I$.
Analogously, $\I\subseteq\Zo$ is $\Zo$-\emph{convex} if $\conv(\I)\cap\Zo=\I$.
Accordingly, we define the subclass $\A^e_{\rm conv}\subset\D$ as 
$$
\A^e_{\rm conv} = \{ E\in\D \,:\, Z(E) \text{ is } \Ze\text{-convex}\},
$$
and, analogously, the subclass $\A^o_{\rm conv}$ by requiring $Z(E)$ to be $\Zo$-convex.
We also set the class $\A_{\conv}:=\A_{\conv}^e\cup\A_{\conv}^o$.
\end{definition}

The notion of \emph{convex lattice set} has already been given for $\I\subset\Z^2$ (see for instance \cite{GGZ}).
Note that $\I$ is $\Ze$-convex if and only if there exists a convex set $K\subset\R^2$ such that $\I=K\cap\Ze$, and the same holds for $\Zo$-convex sets.

For every lattice set $\I\subseteq\Ze$ (or $\Zo$) there holds $\partial\I=\I$, since $\I$ consists of isolated points of $\Z^2$.
Since in the following we will deal with checkerboard sets we need a finer definition of boundary for such lattice sets.

\begin{figure}[h]
\centering
\def\svgwidth{300pt}
\begingroup%
  \makeatletter%
  \providecommand\color[2][]{%
    \errmessage{(Inkscape) Color is used for the text in Inkscape, but the package 'color.sty' is not loaded}%
    \renewcommand\color[2][]{}%
  }%
  \providecommand\transparent[1]{%
    \errmessage{(Inkscape) Transparency is used (non-zero) for the text in Inkscape, but the package 'transparent.sty' is not loaded}%
    \renewcommand\transparent[1]{}%
  }%
  \providecommand\rotatebox[2]{#2}%
  \newcommand*\fsize{\dimexpr\f@size pt\relax}%
  \newcommand*\lineheight[1]{\fontsize{\fsize}{#1\fsize}\selectfont}%
  \ifx\svgwidth\undefined%
    \setlength{\unitlength}{1039.31570435bp}%
    \ifx\svgscale\undefined%
      \relax%
    \else%
      \setlength{\unitlength}{\unitlength * \real{\svgscale}}%
    \fi%
  \else%
    \setlength{\unitlength}{\svgwidth}%
  \fi%
  \global\let\svgwidth\undefined%
  \global\let\svgscale\undefined%
  \makeatother%
  \begin{picture}(1,0.48809868)%
    \lineheight{1}%
    \setlength\tabcolsep{0pt}%
    \put(0,0){\includegraphics[width=\unitlength,page=1]{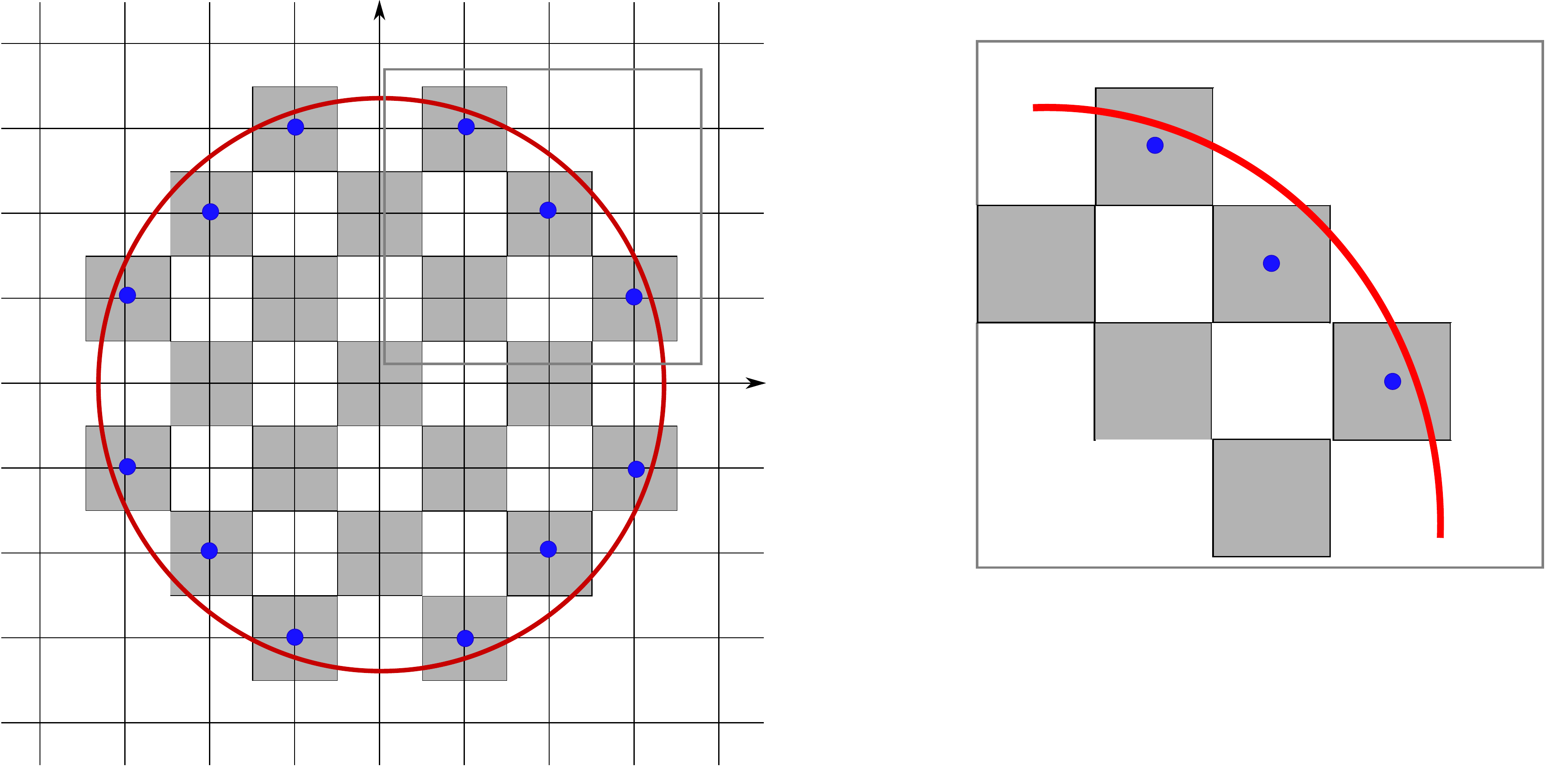}}%
    \put(0.8080797,0.41493648){\color[rgb]{0,0,0}\makebox(0,0)[lt]{\lineheight{1.25}\smash{\begin{tabular}[t]{l}$\jv^-$\end{tabular}}}}%
    \put(0.88714983,0.3341135){\color[rgb]{0,0,0}\makebox(0,0)[lt]{\lineheight{1.25}\smash{\begin{tabular}[t]{l}$\jv$\end{tabular}}}}%
    \put(0.94240543,0.24504332){\color[rgb]{0,0,0}\makebox(0,0)[lt]{\lineheight{1.25}\smash{\begin{tabular}[t]{l}$\jv^+$\end{tabular}}}}%
    \put(0,0){\includegraphics[width=\unitlength,page=2]{innerbound.pdf}}%
  \end{picture}%
\endgroup%

\caption{The (discrete) effective boundary of $E$ (in blue).}
\label{fig:innerbound}
\end{figure}

\begin{definition}\label{def:eff-boundary}
Let $\I\subset\Ze$ be a lattice set.
We define the \emph{effective (discrete) boundary} of $\I$ as
$$\partial^{\rm eff} \I=\big\{\jv\in \I \,:\, \text{there exists } {\jv_0}\in \Ze\setminus\I \text{ such that } |\jv-{\jv_0}|=\sqrt{2}\big\}.$$
The same definition is given for lattice sets $\I\subset\Zo$.
Let $E\in\A^e\cup\A^o$, we will write $\partial^{\rm eff}E=\partial^{\rm eff} Z(E)$, see Figure \ref{fig:innerbound}.
\end{definition}

\begin{figure}[h]
\centering
\includegraphics[scale=.7]{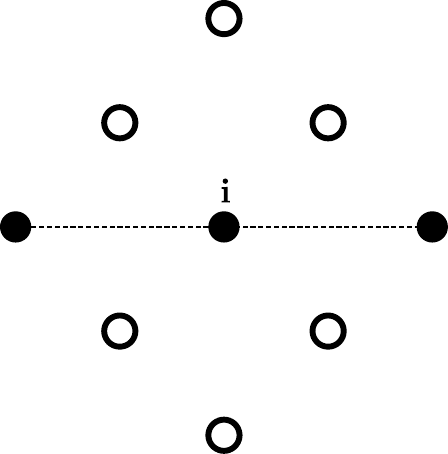}
\qquad
\includegraphics[scale=.7]{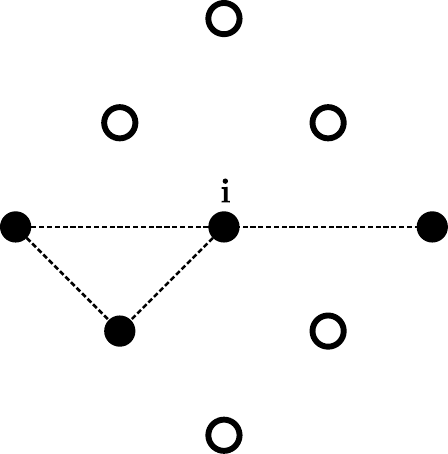}
\qquad
\includegraphics[scale=.7]{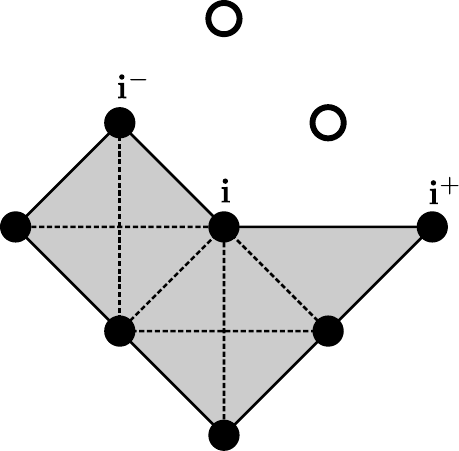}
\caption{The black dots are lattice points of $\I$.
The first two figures are different examples of ``degenerate'' $\iv$.
On the right an example of a non-degenerate $\iv$ and corresponding $\iv^-$ and $\iv^+$; in gray polygon $\mP$.}
\label{fig:non-deg}
\end{figure}

{Given $E\in\A^e\cup\A^o$, and consider $\iv\in\partial^{\rm eff}E$.
We set $\I=\{\jv\in Z(E)\,:\, \|\jv-\iv\|_1\le2\}$.
Then $\iv$ is said to be \emph{non-degenerate} if the set
$$
\bigcup_{\substack{\jv_1,\jv_2\in\I \\ |\jv_1-\jv_2|\le 2}}[\jv_1,\jv_2]
$$
is the boundary of a triangulation of a simple polygon $\mP$.
Then, we can define two boundary points $\iv^-, \iv^+\in\partial^{\rm eff}E$ as the vertices of $\mP$, respectively, preceding and following $\iv$ in the clockwise orientation of $\partial\mP$, as depicted in Figure \ref{fig:non-deg}.
We will say that $\iv^-$ \emph{precedes} $\iv$ and that $\iv^+$ \emph{follows} $\iv$.

In the sequel, we will often consider the following non-degeneracy condition on sets $E\in\A_{\conv}$;
\begin{equation}\label{non-degeneracy}
\text{every } \iv\in\partial^{\rm eff}E \text{ is non-degenerate.}
\end{equation}
Condition \eqref{non-degeneracy} allows to define an orientation of $\partial^{\rm eff}E$, since for every $\iv\in\partial^{\rm eff}E$ we can define $\iv^-$ and $\iv^+$ as above.
The following definitions are therefore well-posed.}

\begin{definition}[Discrete convex vertices]\label{pseudo-edge-def}
Let $E\in\A_{\rm conv}$ satisfy \eqref{non-degeneracy}.
Given $\jv\in\partial^{\rm eff} E$, let $\jv^+$ (resp., $\jv^-$) follow (resp., precede) $\jv$ in $\partial^{\rm eff} E$ in the clockwise orientation.
We define the \emph{right} and \emph{left outward unit normal vector} at $\jv$ as
$$
\bm\nu^+(\jv) := \frac{(j_2-j_2^+,j^+_1-j_1)}{\sqrt{(j_2^+-j_2)^2+(j_1^+-j_1)^2}}, \quad
\bm\nu^-(\jv) := \frac{(j_2^--j_2,j_1-j^-_1)}{\sqrt{(j_2-j_2^-)^2+(j_1-j_1^-)^2}}\,,
$$
respectively.
Then we say that $\jv$ is a \emph{discrete convex vertex} (or \emph{discrete vertex}) if
$$
\theta(\bm\nu^+(\jv),\bm\nu^-(\jv))<0
$$
where 
$\theta$ is introduced in Definition \ref{angle}.
\end{definition}

\begin{figure}[htp]
\centering
\includegraphics[scale=.8]{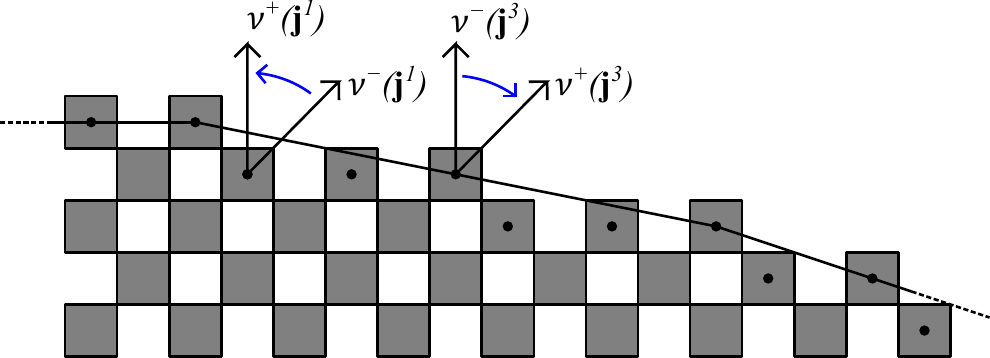}
\caption{A discrete vertex of $E$ may be a boundary point (not a vertex) of $\conv(Z(E))$.}
\label{fig:disc-edge-pol1}
\end{figure}
\begin{remark}[Vertices and discrete vertices]
The definition of discrete vertex given above is motivated by the fact that the vertices of $\conv(Z(E))$ are discrete (convex) vertices of $E$.
Whereas, points $\jv\in\partial^{\rm eff}E$ such that
$$
\theta(\bm\nu^+(\jv),\bm\nu^-(\jv))>0
$$
are always contained in the interior of $\conv(Z(E))$ (Figure \ref{fig:disc-edge-pol1}).
This choice will also facilitate the definition of \emph{discrete edge} (see Definition \ref{disc-edge-def} below).

\begin{figure}[htp]
\centering
\includegraphics[scale=.8]{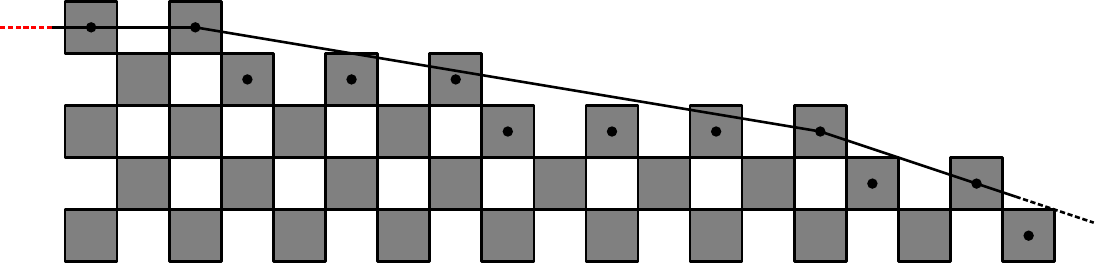}
\caption{An example of a discrete vertex of $E$ contained in the interior of $\conv(Z(E))$.}
\label{fig:disc-edge-pol2}
\end{figure}
Note that we may have discrete vertices of $E$ lying on the boundary of $\conv(Z(E))$ which are not vertices of $\conv(Z(E))$ (see Fig.~\ref{fig:disc-edge-pol1}), and discrete vertices of $E$ in the interior of $\conv(Z(E))$, as well (see Fig.~\ref{fig:disc-edge-pol2}).
\end{remark}

\begin{definition}[Discrete edges]\label{disc-edge-def}
Let $E\in\A_{\rm conv}$ satisfy \eqref{non-degeneracy}. 
We define a \emph{discrete edge} as a set of consecutive points of $\partial^{\rm eff} E$, say $\ell=\{\jv^l\}_{l=0}^L$ where $L\ge2$ and $\jv^0$ and $\jv^L$ are discrete vertices.
We define the \emph{outward unit normal vector} of the discrete edge $\ell$ as
$$\bm\nu(\ell) := \frac{(j_2^0-j^L_2,j^L_1-j_1^0)}{\sqrt{(j^L_2-j_2^0)^2+(j^L_1-j^0_1)^2}}\,.$$
We denote by $\mathcal{E}(E)$ the set of all discrete edges $\ell\subset \partial^{\rm eff}E$.
\end{definition}

Let $E\in\A_{\rm conv}$ satisfy \eqref{non-degeneracy}.
For every $\ell\in\partial^{\rm eff}E$ we define the \emph{slope} of $\ell$ as
\begin{equation}
s(\ell):=\frac{\nu(\ell)_1}{\nu(\ell)_2}\in[-\infty,+\infty]\,,
\label{eq:slope}
\end{equation}
where $\nu(\ell)_k$, $k=1,2$ indicate the components of $\bm\nu(\ell)$, with the convention that $\frac{\pm1}{0}=\pm\infty$.

\begin{figure}[h]
\begin{minipage}{0.5\linewidth}
\begin{center}
\includegraphics[scale=.8]{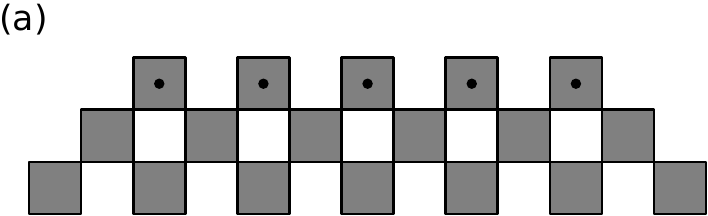}
\end{center}
\end{minipage}
\begin{minipage}{0.5\linewidth}
\begin{center}
\includegraphics[scale=.8]{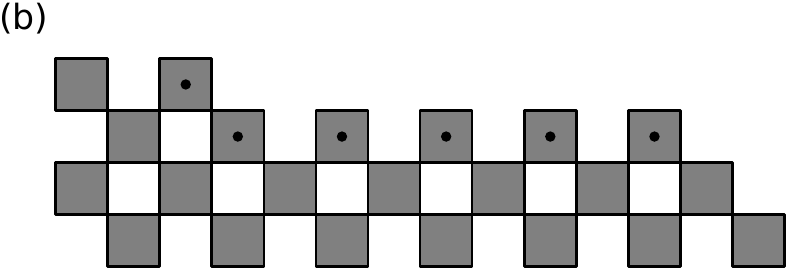}
\end{center}
\end{minipage}

\vspace{.5cm}

\begin{minipage}{0.5\linewidth}
\begin{center}
\includegraphics[scale=.8]{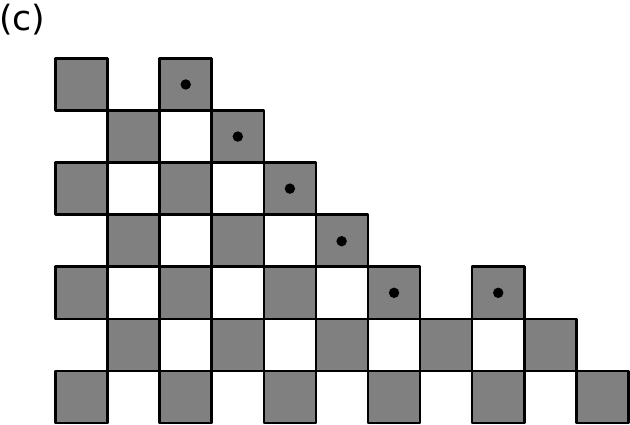}
\end{center}
\end{minipage}
\begin{minipage}{0.5\linewidth}
\begin{center}
\includegraphics[scale=.8]{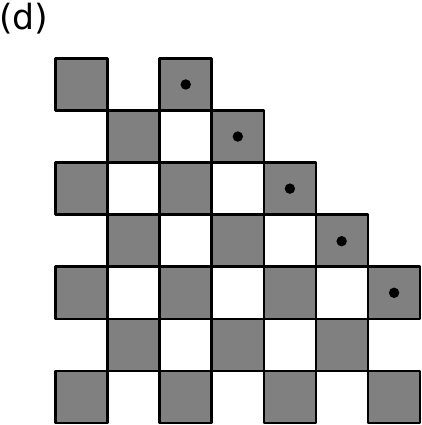}
\end{center}
\end{minipage}
\caption{Some examples of discrete edges.}
\label{fig:disc-edge}
\end{figure}

\begin{remark}\label{flat-slant-edge}
We list all the possible cases of discrete edges of sets $E\in\A_{\conv}$ satisfying \eqref{non-degeneracy} that are symmetric with respect to the axes and the bisectors $x_2=\pm x_1$.
Such symmetric sets will play a central role in the sequel of the paper.
Up to rotations of angle $k\pi$ and reflections we can restrict this characterization to discrete edges $\ell\in\mathcal{E}(E)$ such that $\ell=\{\jv^l\}_{l=0}^L\subset\{\x\in\R^2 \,:\, x_2>0\}$ having $s(\ell)\in[0,1]$.
We have the following characterization:

\smallskip
(i) if $s(\ell)=0$ then $\jv^l=\jv^{l-1}+(2,0)$ for every $1\le l\le L$;

(ii) if $s(\ell)\in(0,\frac{1}{3}]$ 
then $\jv^l=\jv^{l-1}+(2,0)$ for every $1< l\le L$ and $\jv^1=\jv^0+(1,-1)$;

(iii) if $s(\ell)\in(\frac{1}{3},1)$
then $\jv^l=\jv^{l-1}+(1,-1)$ for every $1\le l<L$ and $\jv^L=\jv^{L-1}+(2,0)$;

(iv) if $s(\ell)=1$ then $\jv^l=\jv^{l-1}+(1,-1)$ for every $1\le l\le L$.

These four types of discrete edge are pictured in Fig.~\ref{fig:disc-edge}(a), (b), (c) and (d), respectively. 
\end{remark}


\begin{definition}\label{def:projection}
For every norm $\varphi$ and every $E\in\D$, we introduce the {\em projection map of integer points on $E$}; that is, the set-valued map $\pi_E^\varphi:\Z^2\to{\mathcal{P}(\Z^2)}$ defined as
\begin{equation}\label{proj}
\pi_E^\varphi(\jv):=\argmin{\jv'\in Z(E)}\varphi(\jv-\jv')\,.
\end{equation}
\end{definition}

\subsection{Minkowski sum of sets}\label{sec:Minkowski}
We recall that the Minkowski sum of sets $A$ and $B$ is defined as $A+B=\{a+b \,|\, a\in A,b\in B\}$, and $A+\emptyset=\emptyset$.
If $m\in\N$, we denote by $mA$ the set $\{ma \,|\, a\in A\}$ and, if $A$ is non-empty, we will often write $A[m]$ to indicate the sum $A+A+\cdots+A$ $m$-times.
Among the many properties of Minkowski sum, we recall the commutability of Minkowski sum and the compatibility to the operation of taking the convex hull; that is, 
\begin{equation}
\conv(A+B)=\conv(A)+\conv(B)\,.
\label{eq:comMinkowconv}
\end{equation}
We recall without proof a result about the Minkowski sum of two convex polygons (see, e.g., \cite{BDD}).

\begin{proposition}\label{min-sum-edges}
Let $A$ and $B$ be convex polygons in $\R^2$. Let $L_A:=\{l_{i,A}\}_{i=1,\dots,n}$ and $L_B:=\{l_{j,B}\}_{j=1,\dots,m}$ be the sets of the edges of $A$ and $B$, respectively. Let $\mathcal{V}_A:=\{\nu_{i,A}\}_{i=1,\dots,n}$ and $\mathcal{V}_B:=\{\nu_{j,B}\}_{j=1,\dots,m}$ be the sets of the outer normal vectors of $A$ and $B$, respectively. Then,
\begin{enumerate}
\item[\rm(i)] if $\mathcal{V}_A\cap\mathcal{V}_B=\emptyset$, then $L_{A+B}=L_A\cup L_B$ and $\mathcal{V}_{A+B}=\mathcal{V}_{A}\cup \mathcal{V}_{B}$;
\item[\rm(ii)] if $|\mathcal{V}_A\cap\mathcal{V}_B|=p$, $1\leq p\leq \min\{n,m\}$, then $|L_{A+B}|=n+m-p$. More precisely, if $\nu_{i,A}=\nu_{j,B}$ for some $i\in\{1,\dots,n\}$ and $j\in\{1,\dots,m\}$, then $l_{i,A}+l_{j,B}\in L_{A+B}$, $l_{i,A}\not\in L_{A+B}$, $l_{j,B}\not\in L_{A+B}$ and $\nu_{i,A}=\nu_{j,B}\in \mathcal{V}_{A+B}$. If, instead, $\nu_{i,A}\neq\nu_{j,B}$, then $l_{i,A}\in L_{A+B}$, $l_{j,B}\in L_{A+B}$, $\nu_{i,A}\in \mathcal{V}_{A+B}$ and $\nu_{j,B}\in \mathcal{V}_{A+B}$.
\end{enumerate}
In particular, if $A=B$, then $L_{A+A}=\{l_{i,A}+l_{i,A}\}_{i=1,\dots,n}$ and $\mathcal{V}_{A+A}=\mathcal{V}_A$.
\end{proposition}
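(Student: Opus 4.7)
The plan is to reduce the statement to the support-function calculus for convex bodies, which linearizes Minkowski addition. First I would introduce, for a convex polygon $K\subset\R^2$ and a unit vector $\nu\in\mathbb{S}^1$, the support function $h_K(\nu):=\sup_{x\in K}\langle x,\nu\rangle$ and the face in direction $\nu$:
$$F(K,\nu):=\{x\in K:\langle x,\nu\rangle=h_K(\nu)\}.$$
The basic identity $h_{A+B}=h_A+h_B$, combined with the fact that $\sup\{\langle x+y,\nu\rangle:x\in A,\,y\in B\}$ is attained exactly on the sum of the two argmax sets, yields the face decomposition $F(A+B,\nu)=F(A,\nu)+F(B,\nu)$ for every $\nu\in\mathbb{S}^1$. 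Moreover, for a convex polygon $K$, the face $F(K,\nu)$ is $1$-dimensional (i.e.~an edge) exactly when $\nu\in\mathcal{V}_K$, in which case it coincides with the unique edge of $K$ having outer normal $\nu$; otherwise $F(K,\nu)$ is a single vertex.

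From here the argument proceeds by a case analysis on $\nu$. If $\nu\notin\mathcal{V}_A\cup\mathcal{V}_B$, both $F(A,\nu)$ and $F(B,\nu)$ are single points, so $F(A+B,\nu)$ is a vertex and $\nu\notin\mathcal{V}_{A+B}$. If $\nu=\nu_{i,A}\in\mathcal{V}_A\setminus\mathcal{V}_B$, then $F(A,\nu)=l_{i,A}$ while $F(B,\nu)$ reduces to a vertex $v_B$, so that $F(A+B,\nu)=l_{i,A}+v_B$ is a translate of $l_{i,A}$, hence an edge of $A+B$ with outer normal $\nu_{i,A}$; the symmetric case in $\mathcal{V}_B\setminus\mathcal{V}_A$ contributes a translate of the corresponding $l_{j,B}$. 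Finally, if $\nu_{i,A}=\nu_{j,B}=:\nu$ lies in $\mathcal{V}_A\cap\mathcal{V}_B$, then $F(A+B,\nu)=l_{i,A}+l_{j,B}$ is the Minkowski sum of two parallel segments, and is therefore itself a single segment parallel to them with length $\mathcal{H}^1(l_{i,A})+\mathcal{H}^1(l_{j,B})$. Consequently $\mathcal{V}_{A+B}=\mathcal{V}_A\cup\mathcal{V}_B$ in every case, and since distinct outer normals of a convex polygon correspond to distinct edges one obtains $|L_{A+B}|=|\mathcal{V}_{A+B}|=n+m-p$ with $p=|\mathcal{V}_A\cap\mathcal{V}_B|$.

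Conclusion (i) then follows at once because, when $p=0$, every edge of $A+B$ is a translate of a unique edge of either $A$ or $B$; conclusion (ii) follows from the merging rule just derived for coinciding normal directions. The specialization $A=B$ satisfies $\mathcal{V}_A=\mathcal{V}_B$, hence every normal is matched and $F(A+A,\nu_{i,A})=2\,l_{i,A}$ for every $i$. The only point requiring care, though elementary, is verifying that the Minkowski sum of two parallel segments (rather than two segments in general position) is a single segment rather than a parallelogram; this is precisely what makes the ``merging'' in case (ii) produce exactly one new edge per coinciding normal direction, and it holds because any two parallel segments lie in a common $1$-dimensional affine subspace.
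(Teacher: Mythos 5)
Your argument is correct. Note, however, that the paper does not actually prove Proposition~\ref{min-sum-edges}: it is ``recalled without proof'' with a pointer to \cite{BDD}, so there is no in-paper argument to compare yours against. Your route via the support function --- the additivity $h_{A+B}=h_A+h_B$, the face identity $F(A+B,\nu)=F(A,\nu)+F(B,\nu)$, and the dichotomy edge/vertex for $F(K,\nu)$ according to whether $\nu$ is an outer edge normal --- is the standard proof and covers all the claims, including the edge count $|L_{A+B}|=n+m-p$ and the specialization $A=B$. Two minor points of hygiene: the statement $L_{A+B}=L_A\cup L_B$ (and likewise $l_{i,A}+l_{j,B}\in L_{A+B}$) must be read up to translation, since $F(A+B,\nu_{i,A})=l_{i,A}+v_B$ is a translate of $l_{i,A}$ rather than $l_{i,A}$ itself --- you make this explicit, which is the right reading of the proposition; and your observation that the sum of the two parallel faces is a single segment (because in $\R^2$ both are contained in lines orthogonal to the common normal $\nu$, hence parallel) is exactly the step that makes the merging in case (ii) produce one edge per coinciding direction. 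No gaps.
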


%
%

\subsection{The lattice point-counting problem: $m$-fold Minkowski sums}\label{sec:counting}

Let $B=\{{\bf w}_1,{\bf w}_2\}$ be a basis of $\R^2$. The set
\begin{equation*}
\Lambda=\Lambda(B):=\{z_1 {\bf w}_1 + z_2 {\bf w}_2:\,\, z_1,z_2\in\Z\}
\end{equation*}
is called a \emph{lattice} of $\R^2$ with basis $B$. The corresponding \emph{fundamental cell} is defined as
\begin{equation*}
\{\mu_1{\bf w}_1+\mu_2{\bf w}_2:\,\,\mu_1,\mu_2\in[0,1)\}
\end{equation*}
whose area is $|{\rm det}(B)|$. It can be checked that the area of the fundamental cell is independent of the choice of the basis and is referred to as the \emph{determinant} of $\Lambda$, ${\rm det}(\Lambda)$. Lattices are additive subgroups of $\R^2$ and they are discrete sets. Examples of lattices are the standard lattice $\Z^2$, with basis $\{(1,0),(0,1)\}$ and $|{\rm det}(\Z^2)|=1$, and the ``checkerboard lattice'' $\Ze$, with basis $\{(-1,1),(1,1)\}$ and $|{\rm det}(\Ze)|=2$. $\Zo$ is not a lattice, since $(1,0)+(0,1)=(1,1)\not\in\Zo$.

It will be useful in the sequel to obtain an estimate on the number of the lattice points contained in $m\mathcal{Q}$, $m\in\N$ for $\mathcal{Q}$ lattice convex polygon. For this, we first recall a fundamental result for counting the lattice points in $\mathcal{Q}$.
\begin{theorem}[Pick's Theorem, \cite{pick}]
Let $\Lambda$ be any lattice in $\R^2$, let $\I\subset\Lambda$ be a finite set and $\mathcal{Q}={\rm conv}(\I)$. Then
\begin{equation}
\#(\mathcal{Q}\cap\Lambda)=\frac{1}{|{\rm det}(\Lambda)|}|\mathcal{Q}|+\frac{1}{2}\#(\partial\mathcal{Q}\cap\Lambda)+1,
\label{pick}
\end{equation}
where $|\mathcal{Q}|$ is the area of $\mathcal{Q}$ and $\partial\mathcal{Q}$ its topological boundary.
\end{theorem}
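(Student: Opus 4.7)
My plan is to follow the classical route: reduce to the standard lattice $\Z^2$, establish an additivity principle for a suitable ``defect'' functional, and verify it on a hierarchy of simple shapes.

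\textbf{Step 1 (reduction to $\Z^2$).} Let $T:\R^2\to\R^2$ be the unique linear map sending the basis ${\bf w}_1,{\bf w}_2$ of $\Lambda$ to the standard basis of $\Z^2$. Then $T(\Lambda)=\Z^2$, $T(\mathcal{Q})=\conv(T(\I))$ is a convex lattice polygon in $\Z^2$, the lattice point counts of $\mathcal{Q}\cap\Lambda$ and $\partial\mathcal{Q}\cap\Lambda$ are preserved by $T$, and areas transform according to $|T(\mathcal{Q})|=|\mathcal{Q}|/|\det(\Lambda)|$. Hence, Pick's formula for $\Lambda$ is equivalent to the $\Lambda=\Z^2$ case, to which I restrict from now on.

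\textbf{Step 2 (defect functional and additivity).} I would define, for any lattice polygon $\mathcal{Q}\subset\R^2$,
\[
f(\mathcal{Q}):=|\mathcal{Q}|+\tfrac{1}{2}\#(\partial\mathcal{Q}\cap\Z^2)+1-\#(\mathcal{Q}\cap\Z^2),
\]
so that Pick's statement becomes $f(\mathcal{Q})=0$. The key lemma is: if $\mathcal{Q}=\mathcal{Q}_1\cup\mathcal{Q}_2$ with $\mathcal{Q}_1\cap\mathcal{Q}_2$ a common lattice segment, then $f(\mathcal{Q})=f(\mathcal{Q}_1)+f(\mathcal{Q}_2)$. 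The proof is a bookkeeping: if $k$ denotes the number of lattice points on the shared segment, the $k-2$ interior ones are counted in $\partial\mathcal{Q}_1$ and $\partial\mathcal{Q}_2$ but lie in the interior of $\mathcal{Q}$, while the $2$ endpoints lie on $\partial\mathcal{Q}$ and are double-counted. Tracking these contributions on both area (trivially additive) and on the boundary/total counts gives the identity.

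\textbf{Step 3 (base cases).} First, $f=0$ on the unit square $[0,1]^2$ by direct computation. Inductive splitting then yields $f=0$ on every axis-parallel lattice rectangle. Next, for a right triangle with legs parallel to the axes, I glue two congruent copies along the hypotenuse to form a rectangle; the additivity lemma plus the rectangle case yields $f=0$ for such triangles. Finally, for a general lattice triangle, I enclose it in its axis-parallel bounding rectangle, which decomposes (by adding at most three right triangles with axis-parallel legs and possibly small auxiliary rectangles along the sides) into pieces on which $f$ is already known to vanish, so $f=0$ on the triangle by additivity.

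\textbf{Step 4 (assembly) and main obstacle.} Since $\mathcal{Q}=\conv(\I)$ is a convex lattice polygon with vertices $v_0,\dots,v_n$, the fan triangulation $\mathcal{Q}=\bigcup_{i=1}^{n-1}\triangle(v_0,v_i,v_{i+1})$ expresses $\mathcal{Q}$ as a union of lattice triangles glued along lattice segments. Iterating the additivity lemma yields $f(\mathcal{Q})=\sum_i f(\triangle_i)=0$. I expect the main technical hurdle to be in Step 2 and the right-triangle subcase of Step 3: the accounting of which lattice points on shared segments switch from ``boundary'' to ``interior'' status after gluing must be done cleanly to ensure the $+1$ constants in $f$ cancel correctly, and the reflection argument for right triangles requires identifying how the hypotenuse lattice points are shared by the two reflected copies.
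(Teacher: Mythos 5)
The paper does not prove this statement: it is recalled as a classical result and attributed to Pick's original 1899 paper, so there is no in-paper argument to compare against. Your proposal is the standard elementary proof and it is essentially correct. The reduction in Step 1 is right: the linear map $T$ with $T(\Lambda)=\Z^2$ has $|\det T|=1/|{\det}(\Lambda)|$, so areas pick up exactly the factor $1/|{\det}(\Lambda)|$ appearing in \eqref{pick}, while both point counts are preserved. The additivity in Step 2 does close up: with $k$ lattice points on the shared segment one has $\#(\partial\mathcal{Q}\cap\Z^2)=\#(\partial\mathcal{Q}_1\cap\Z^2)+\#(\partial\mathcal{Q}_2\cap\Z^2)-2(k-2)-2$ and $\#(\mathcal{Q}\cap\Z^2)=\#(\mathcal{Q}_1\cap\Z^2)+\#(\mathcal{Q}_2\cap\Z^2)-k$, and substituting these shows $f(\mathcal{Q}_1)+f(\mathcal{Q}_2)=f(\mathcal{Q})$ with the two $+1$'s cancelling against the $-\tfrac{1}{2}\cdot 2$ from the endpoints; the triangle and rectangle base cases and the fan triangulation of a convex polygon are routine. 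The only caveat worth recording is that the formula, and your proof, require $\mathcal{Q}$ to be two-dimensional: if $\I$ is a single point or collinear, $|\mathcal{Q}|=0$ and every lattice point of $\mathcal{Q}$ is a boundary point, so the right-hand side of \eqref{pick} overcounts. This hypothesis is implicit in the paper (it is made explicit in Proposition~\ref{prop:iden}) and should be stated when you invoke the fan triangulation, which presumes genuine vertices $v_0,\dots,v_n$ with $n\ge 2$.
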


A non-trivial problem in discrete geometry is the comparison between the set of the lattice points contained in the homothetic copy $m\mathcal{Q}$ of a convex lattice polyhedron $\mathcal{Q}$ with the $m$-fold Minkowski sum $(\mathcal{Q}\cap\Z^n)[m]$, $n\geq2$ (see, e.g., \cite{LinRoc}). It will be sufficient for our purposes here to mention that in the two dimensional setting the two lattice sets coincide (see \cite[Corollary~2.4]{LinRoc}). Moreover, an inspection of the proof reveals that the result still holds if we replace $\Z^2$ with any two-dimensional lattice $\Lambda$.

\begin{proposition}\label{prop:iden}
Let $\Lambda$ be any lattice in $\R^2$, let $\I\subset\Lambda$ be a finite set and $\mathcal{Q}={\rm conv}(\I)$ be two-dimensional. Then the equality
\begin{equation}
(\mathcal{Q}\cap\Lambda)[m]=(m\mathcal{Q})\cap\Lambda
\label{iden}
\end{equation}
holds for every $m\in\N$.
\end{proposition}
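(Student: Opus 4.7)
The plan is to establish the two inclusions in \eqref{iden} separately, handling the easy one by hand and reducing the hard one to the $\Z^2$-case cited from \cite{LinRoc} via a linear change of variable.

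For the inclusion $(\mathcal{Q}\cap\Lambda)[m]\subseteq(m\mathcal{Q})\cap\Lambda$ I would argue directly: any $\x=\x_1+\cdots+\x_m$ with $\x_i\in\mathcal{Q}\cap\Lambda$ lies in $\Lambda$ since $\Lambda$ is an additive subgroup of $\R^2$, and $\tfrac{1}{m}\x=\tfrac{1}{m}\sum_i\x_i\in\mathcal{Q}$ by convexity, whence $\x\in m\mathcal{Q}$. No use of two-dimensionality is needed for this half.

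The substantive direction is $(m\mathcal{Q})\cap\Lambda\subseteq(\mathcal{Q}\cap\Lambda)[m]$. Here the plan is to reduce to the standard lattice. Fix a basis $B=\{{\bf w}_1,{\bf w}_2\}$ of $\Lambda$ and let $T\colon\R^2\to\R^2$ be the linear isomorphism sending ${\bf w}_i\mapsto{\bf e}_i$, so that $T(\Lambda)=\Z^2$. Because $T$ is linear it commutes with taking convex hulls, Minkowski sums, and dilations by $m$; in particular, setting $\I':=T(\I)\subset\Z^2$ and $\mathcal{Q}':=\conv(\I')=T(\mathcal{Q})$,
\begin{equation*}
T\bigl((m\mathcal{Q})\cap\Lambda\bigr)=(m\mathcal{Q}')\cap\Z^2,\qquad T\bigl((\mathcal{Q}\cap\Lambda)[m]\bigr)=(\mathcal{Q}'\cap\Z^2)[m].
\end{equation*}
Since $\mathcal{Q}$ is two-dimensional and $T$ is a bijection, $\mathcal{Q}'$ is a two-dimensional convex lattice polygon in $\Z^2$. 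Applying \cite[Corollary~2.4]{LinRoc} to $\mathcal{Q}'$ yields $(m\mathcal{Q}')\cap\Z^2=(\mathcal{Q}'\cap\Z^2)[m]$, and pulling the identity back through $T^{-1}$ gives the desired inclusion (actually equality) for $\Lambda$.

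The only delicate point I foresee is the verification that the cited $\Z^2$-result transfers correctly, i.e.\ that the hypotheses on $\mathcal{Q}'$ (convex lattice polygon of dimension two with vertices in $\Z^2$) are preserved under $T$; this is immediate because $T(\I)\subset\Z^2$ is finite and $T$ preserves dimension and convexity. No additional combinatorial work on the lattice $\Lambda$ itself is required, which is precisely what makes this proof short.
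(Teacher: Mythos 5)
Your proposal is correct. It is worth noting that the paper does not actually write out a proof of this proposition: it cites \cite[Corollary~2.4]{LinRoc} for the case $\Lambda=\Z^2$ and then simply asserts that ``an inspection of the proof reveals that the result still holds'' for a general two-dimensional lattice. Your argument reaches the same conclusion by a cleaner route: rather than re-examining the internals of the proof in \cite{LinRoc}, you transfer the \emph{statement} through the linear isomorphism $T$ sending a basis of $\Lambda$ to the standard basis. Since $T$ is linear and bijective it commutes with convex hulls, Minkowski sums, dilations and intersections, and it carries $\Lambda$ onto $\Z^2$, so the $\Z^2$-identity pulls back verbatim; the elementary inclusion $(\mathcal{Q}\cap\Lambda)[m]\subseteq(m\mathcal{Q})\cap\Lambda$ you supply (subgroup property plus convexity of the average) is also fine, and in fact is subsumed by the equality you obtain from the reduction. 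What your approach buys is that the generalization to arbitrary lattices becomes a formal consequence of the cited statement rather than a claim about its proof, which is the more robust way to justify the sentence preceding the proposition in the paper.
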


Now, in view of Proposition~\ref{prop:iden} and by iterating formula (\ref{pick}), Pick's Theorem generalizes to $m\mathcal{Q}$, $m\geq1$, as
\begin{equation}
\#((m\mathcal{Q})\cap\Lambda)=\frac{1}{|{\rm det}(\Lambda)|}|\mathcal{Q}|m^2+\frac{1}{2}\#(\partial\mathcal{Q}\cap\Lambda)m+1\,.
\label{counting}
\end{equation}

\subsection{Submodularity and absolute norms}\label{sec:submodularity}

We briefly recall the concept of submodularity which is well known in discrete convex analysis (see, e.g., \cite[Ch.~2, eq. (2.17)]{Murota}).
Setting $\R^2_+:=\{\x=(x_1,x_2)\in\R^2 \,|\, x_1,x_2\ge 0\}$, for every $\x,\y\in \R^2$ we define
$$\x\vee \y:=(\max\{x_1,y_1\},\max\{x_2,y_2\}) \quad \mbox{ and }\quad \x\wedge \y:=(\min\{x_1,y_1\},\min\{x_2,y_2\})\,.$$
A function $f:\R^2_+\rightarrow\R$ is said to be \emph{submodular} if it satisfies the following inequality
\begin{equation}\label{submodularity}
f(\x\vee \y)+f(\x\wedge \y)\leq f(\x)+f(\y),\quad \mbox{for every $\x,\y\in\R^2_+$}\,.
\end{equation}
It is known (see \cite[Proposition 5]{MM}) that every positively homogeneous function defined in the cone $\R^2_+$ is subadditive if and only if it is submodular. In particular, this yields that every \emph{absolute norm} $\varphi$ (i.e., $\varphi(\x)$ depends only on $|x_1|$ and $|x_2|$) complies with \eqref{submodularity}.
We recall that an absolute norm is \emph{monotonic}:
\begin{equation}
|x_1|\leq|y_1| \mbox{\, and \,}  |x_2|\leq|y_2| \quad \mbox{imply}\quad \varphi({\bf x})\leq\varphi({\bf y})\,.
\label{eq:monotonic}
\end{equation}

\section{Setting of the problem} \label{sec:setting}
We will deal with \emph{negative discrete perimeters}; that is, the Euclidean perimeter functional (with negative sign) restricted to $\D_\e$ relaxed to the space $\mathcal{X}$.
Namely, we define the functionals $F_\e:\mathcal{X}\to(-\infty,+\infty]$ as
\begin{equation}\label{discrper}
F_\e(E) = \begin{cases}
-\mathcal{H}^1(\partial E) & E\in\D_\e \\
+\infty & \text{otherwise.}
\end{cases}
\end{equation}
Note that these energies are related to the corresponding interaction energies defined on lattice sets
$$F_\e^{\rm lat}(\I)=-\,\e\,\# \big\{(\iv,\jv)\in\e\Z^2\times\e\Z^2 \,|\, \iv\in\I, \, \jv\not\in\I, \, |\iv-\jv|=\e\big\},$$
where $\I\subset\e\Z^2$, and $F_\e^{\rm lat}(Z_\e(E))=F_\e(E)$.
The functionals $F_\e$, in turn, may be seen as \emph{nearest-neighbor} (NN) antiferromagnetic interaction energies associated to a lattice spin-system; \emph{i.e.}, given $u:\e\Z^2\to\{-1,1\}$ one defines
$$E_\e(u) = -\frac{\e}{4}\sum_{\substack{\iv,\jv\in\e\Z^2 \\ |\iv-\jv|=\e}} (u(\iv)-u(\jv))^2,$$
whence $F_\e(E(\{u=1\}))=E_\e(u)$. The asymptotic behavior as $\varepsilon\to0$ of energies like $F_\e$  has been studied, e.g., in \cite{ABC}.
%

Let $\varphi:\R^2\to[0,+\infty)$ be a norm.
For every pair of lattice sets $E,E'\in\D_\e$, we define the dissipations
\begin{equation}\label{dissipation-def}
D_\e^\varphi(E,E')=\e^2 \sum_{\iv\in Z_\e(E)\triangle Z_\e(E')}d_\e^\varphi(\iv,\partial Z_\e(E')),
\end{equation}
where, given $\I\subset\e\Z^2$, $d_\e^\varphi$ denotes the \emph{discrete distance} of any lattice point $\iv\in\e\Z^2$ to $\partial\I$ defined as
$$d_\e^\varphi(\iv,\partial\I)=
\begin{cases}
\inf\{\varphi(\iv-\jv) \,|\, \jv\in\I\} & \text{if } \iv\not\in\I \\
\inf\{\varphi(\iv-\jv) \,|\, \jv\in\e\Z^2\setminus\I\}&\text{if } \iv\in\I.
\end{cases}$$

\begin{remark}\label{integral-diss-remk}
In the sequel, the following integral formulation of the dissipation \eqref{dissipation-def} will be useful.
Indeed,
for every $E'\in\mathcal{X}$ we set
$d_\e^\varphi(\iv,\partial E')=d_\e^\varphi\big(\iv,\partial(E'\cap\e\Z^2)\big)$.
Furthermore, we can extend $d_\e^\varphi(\cdot,\partial E')$ to $\R^2$ by setting 
$d_\e^\varphi(\x,\partial E'):=d_\e^\varphi(\iv,\partial E')$ for $\x\in q_\e(\iv)$.
Thus, for every $E,E'\in\mathcal{X}$, let $E_\e,E'_\e\in\D_\e$ be the corresponding discretizations; \emph{i.e.}, $Z_\e(E_\e)=E\cap\e\Z^2$ and the same for $E'_\e$, we may write
\begin{equation*}
\int_{E\triangle E'} d_\e^\varphi(\x,\partial E')\,\mathrm{d}\x=\e^2 \sum_{\iv\in Z_\e(E_\e)\triangle Z_\e(E'_\e)}d_\e^\varphi(\iv,\partial E'_\e)=D_\e^\varphi(E_\e,E'_\e).
\end{equation*}
We will consider the dissipation in \eqref{dissipation-def} as defined on every pair of sets of finite perimeter; \emph{i.e.}, $D_\e^\varphi:\mathcal{X}\times\mathcal{X}\to[0+\infty]$.
\end{remark}

\subsection{The time-discrete minimization scheme with a monotonicity constraint} \label{sec:choicescal}
For any $\e>0$ and $\tau>0$, let $F_\e$ and $D_\e^\varphi$ be defined as in \eqref{discrper} and \eqref{dissipation-def}, respectively.
We introduce a discrete motion with underlying time step $\tau$ obtained by successive minimization.
At each time step we will minimize an energy $\mathcal{F}_{\e,\tau}^\varphi:\mathcal{X}\times\mathcal{X}\rightarrow(-\infty,+\infty]$ defined as
\begin{equation}
\mathcal{F}_{\e,\tau}^\varphi(E,F)=\e F_\e(E)+\frac{1}{\tau}D_\e^\varphi(E,F)\,,
\label{energy}
\end{equation}
with a \emph{monotonicity constraint} on the discrete trajectories. Namely, we recursively define an increasing (with respect to inclusion) sequence $E_{\e,\tau}^k$ in $\mathcal{D}_\e$ by requiring the following:
\begin{equation}\label{MM-scheme}
\begin{cases}
E_{\e,\tau}^0=q_\e, \\
E_{\e,\tau}^{k+1}\in\argmin{E\in\D_\e,\, E\supset E_{\e,\tau}^k}\mathcal{F}_{\e,\tau}^\varphi(E,E_{\e,\tau}^k), & k\ge0.
\end{cases}
\end{equation}
In some cases we will also analyze solutions of the corresponding \emph{unconstrained} scheme; that is,
\begin{equation}\label{MM-scheme-unc}
\begin{cases}
E_{\e,\tau}^0=q_\e, \\
E_{\e,\tau}^{k+1}\in\argmin{E\in\D_\e}\mathcal{F}_{\e,\tau}^\varphi(E,E_{\e,\tau}^k), & k\ge0,
\end{cases}
\end{equation}
in which the minimization problems are performed over the whole class $\D_\e$.
The discrete orbits associated to functionals $\mathcal{F}_{\varepsilon,\tau}^\varphi$ are thus defined by
\begin{equation}\label{piece}
E_{\e,\tau}(t):=E_{\e,\tau}^{\lfloor t/\tau\rfloor},\quad t>0.
\end{equation}

We say that a curve $E:[0,+\infty)\to\mathcal{X}$ is a \emph{minimizing movement} for the problem \eqref{MM-scheme} or \eqref{MM-scheme-unc} at \emph{regime} $\tau$-$\e$ if it is pointwise limit (in the Hausdorff topology) of discrete orbits $E_{\e,\tau}$, as $\e,\tau\to0$ up to subsequences.

\begin{remark}[choice of scaling]
The scale $\varepsilon$ in the energies $\varepsilon F_\varepsilon$ above is suggested by energetic considerations (see \cite[(6)-(7)]{BraSci14}) and leads to a non-trivial limit of the discrete solutions defined in \eqref{piece}.
This choice is motivated by the fact that $\e F_\e$ has a nontrivial $\Gamma$-limit, as we will show in Section~\ref{G-lim-subsec}.
The energy scaling may also be seen as a time scaling of the discrete flow generated by taking the relaxation on $\mathcal{D}_\e$ of the energy functional $-\mathcal{H}^1$ (see \cite[Section~10.2]{Bra13}).
\end{remark}

\section{Fast convergences and the emergence of a critical regime} \label{sec:fastconvergence}
As remarked in \cite[Ch.~8]{Bra13}, minimizing movements along families of functionals will depend in general on the \emph{regime} $\tau$-$\e$; in our case, on the ratio between the two parameters $\tau$ and $\e$ that characterizes the motion.
We first provide the following result that ensures a compactness property of the minimizers of the energies $\mathcal{F}_{\e,\tau}^\varphi$. In this section $\varphi$ denotes a general norm, without any restriction.

\begin{lemma}\label{compactness-lem}
Let $F_\e$ and $D_\e^\varphi$ be defined as in \eqref{discrper} and \eqref{dissipation-def}, respectively, and $\mathcal{F}_{\e,\tau}^\varphi$ be as in \eqref{energy}.
Let $E'\in\D_\e$ be an admissible set.
For every fixed $\tau>0$ consider
$$E_{\e,\tau}\in\argmin{E\in\mathcal{X}}\mathcal{F}_{\e,\tau}^\varphi(E,E')\,.$$
Then, $Z_\e(E_{\e,\tau})\subset E'+B_{4\tau}^\varphi$ and $d_{\mathcal{H}}\big(E_{\e,\tau}, E'+B_{4\tau}^\varphi \big)<3\sqrt{2}\e$ for $\e$ small enough.
\end{lemma}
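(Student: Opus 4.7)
The plan is to deduce both conclusions from two single-square variational perturbations of the minimiser, using removal for the inclusion and addition for the Hausdorff bound. For any lattice point $\iv\in\e\Z^2$, write $k(\iv)$ for the number of its axis-aligned nearest-neighbours that lie in $Z_\e(E_{\e,\tau})$.

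I would first tackle the inclusion $Z_\e(E_{\e,\tau})\subset E'+\overline{B_{4\tau}^\varphi}$ by contradiction. Assume $\iv\in Z_\e(E_{\e,\tau})$ has $d^\varphi(\iv,E')>4\tau$. Since $\iv\notin E'$, the point $\iv$ lies in the symmetric difference, and $d_\e^\varphi(\iv,\partial E')=d^\varphi(\iv,Z_\e(E'))\geq d^\varphi(\iv,E')>4\tau$. Test the competitor $\tilde E:=E_{\e,\tau}\setminus q_\e(\iv)$. An edge-count gives $\mathcal{H}^1(\partial\tilde E)-\mathcal{H}^1(\partial E_{\e,\tau})=(2k(\iv)-4)\e$ (the $4-k(\iv)$ exposed edges of $q_\e(\iv)$ disappear, while the $k(\iv)$ edges previously shared with neighbours become exposed). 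Combined with the removal of the $\iv$-term from the dissipation, this yields
\[
\mathcal{F}_{\e,\tau}^\varphi(\tilde E,E')-\mathcal{F}_{\e,\tau}^\varphi(E_{\e,\tau},E')=(4-2k(\iv))\e^2-\tfrac{\e^2}{\tau}\,d_\e^\varphi(\iv,\partial E').
\]
Minimality of $E_{\e,\tau}$ forces this to be nonnegative, so $d_\e^\varphi(\iv,\partial E')\leq(4-2k(\iv))\tau\leq 4\tau$, contradicting the assumption.

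The dual perturbation, applied to the competitor $E_{\e,\tau}\cup q_\e(\iv)$ for $\iv\notin Z_\e(E_{\e,\tau})$, yields the reverse discrete inequality: for $\iv\notin Z_\e(E')$ minimality enforces $d_\e^\varphi(\iv,\partial E')\geq(4-2k(\iv))\tau$, while for $\iv\in Z_\e(E')$ the same computation (with reversed sign on the dissipation contribution) gives $(2k(\iv)-4)\tau\geq d_\e^\varphi(\iv,\partial E')$. In either case, every $\iv\in\e\Z^2$ satisfying $d_\e^\varphi(\iv,\partial E')<4\tau$ \emph{strictly} must have $k(\iv)\geq 1$, so that $\iv$ lies at Euclidean distance at most $\e/2$ from $E_{\e,\tau}$.

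For the Hausdorff bound, the inclusion immediately produces $\sup_{\x\in E_{\e,\tau}}d(\x,E'+B_{4\tau}^\varphi)\leq\e\sqrt{2}/2$. The other direction is more delicate and constitutes the main obstacle: given $\y\in E'+B_{4\tau}^\varphi$ with $\varphi(\y-\mathbf{z})<4\tau$ for some $\mathbf{z}\in E'$, the lattice centre $\iv_0$ of the square containing $\y$ cannot be shown to satisfy $d_\e^\varphi(\iv_0,\partial E')<4\tau$ strictly, because the two discretisation errors $\y\mapsto\iv_0$ and $\mathbf{z}\mapsto\jv_0$ (with $\jv_0\in Z_\e(E')$ the lattice centre containing $\mathbf{z}$) introduce an $O(\e)$ perturbation that may push $\varphi(\iv_0-\jv_0)$ above $4\tau$ when $\y$ is close to $\partial(E'+B_{4\tau}^\varphi)$. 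I would absorb this perturbation by an inward shift $\y_t:=(1-t)\y+t\mathbf{z}$ with $t=C(\varphi)\e/\tau$: then $\varphi(\y_t-\mathbf{z})<(1-t)\cdot 4\tau$, the Euclidean displacement $|\y-\y_t|$ is of order $\e$, and for $\e$ small the lattice centre $\iv$ of the square containing $\y_t$ verifies $d_\e^\varphi(\iv,\partial E')\leq\varphi(\iv-\jv_0)<4\tau$ strictly via two triangle inequalities and the equivalence of norms in $\R^2$. The dual inequality then gives $d(\iv,E_{\e,\tau})\leq\e/2$, and closing with the triangle inequality
\[
d(\y,E_{\e,\tau})\leq|\y-\y_t|+|\y_t-\iv|+d(\iv,E_{\e,\tau})=O(\e)<3\sqrt{2}\e
\]
for $\e$ small enough concludes the proof.
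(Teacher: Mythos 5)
Your argument is correct in substance and rests on the same variational mechanism as the paper's proof: testing the minimality of $E_{\e,\tau}$ against the removal and the addition of a single $\e$-square. The removal step for the inclusion is identical. Where you diverge is in extracting the lower Hausdorff bound. The paper shows that every $3\e$-square of the tiling $3\e\Z^2$ whose centre lies in $E'+B^\varphi_{4\tau}$ must intersect $E_{\e,\tau}$, because otherwise one could add an \emph{isolated} square at its centre (the emptiness of the $3\e$-square guarantees $k=0$), and then reads the bound off the diameter $3\sqrt2\,\e$ of those tiles. You instead keep the $k$-dependent addition inequality $d_\e^\varphi(\iv,\partial E')\ge(4-2k(\iv))\tau$ and conclude that every centre strictly inside the enlarged set has a nearest neighbour in $Z_\e(E_{\e,\tau})$, which is a sharper local statement ($d(\iv,E_{\e,\tau})\le\e/2$); and your explicit inward shift to absorb the $O(\e)$ mismatch between $d^\varphi(\cdot,E')$ and the discrete distance $d_\e^\varphi(\cdot,\partial E')$ confronts a boundary-layer issue that the paper silently glosses over (its claim that adding an isolated square is convenient whenever $\jv\in E'+B^\varphi_{4\tau}$ also needs this $O(\e)$ correction). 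The one blemish is your last line: the displacement $|\y-\y_t|$ is of order $\varphi(1,1)\,c_\varphi\,\e$, where $c_\varphi$ is the constant in $|\cdot|\le c_\varphi\,\varphi(\cdot)$, so the total is a \emph{norm-dependent} multiple of $\e$, and the deduction ``$O(\e)<3\sqrt2\e$ for $\e$ small enough'' is not valid — a fixed constant times $\e$ does not drop below $3\sqrt2\,\e$ by shrinking $\e$. For the $\ell^p$-norms the constant does land below $3\sqrt2$, and only the $O(\e)$ rate is used later in the paper, so nothing downstream is affected; but to get the literal constant for a general norm you would need to revert to placing the added square at the centre of an empty $3\e$-tile as the paper does.
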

\begin{proof}
For any $E\in\D_\e$, the variation of the energy $\mathcal{F}_{\e,\tau}^\varphi$ when removing a square of center $\iv\in\e\Z^2$ is
$$\mathcal{F}_{\e,\tau}^\varphi(E,E')-\mathcal{F}_{\e,\tau}^\varphi(E\setminus q_\e(\iv),E')\le 4\e^2-\frac{\e^2}{\tau}d_\e^\varphi(\iv,\partial E')$$
which is strictly negative when $d_\e^\varphi(\iv,\partial E')>4\tau$, thus implying that $Z_\e(E_{\e,\tau})\subset E'+B_{4\tau}^\varphi$.
Furthermore, since it is always convenient to add an isolated square $q_\e(\jv)$ if $\jv\in Z_\e(E'+B_{4\tau}^\varphi)$ then, for every $\jv\in3\e\Z^2\cap E'+B_{4\tau}^\varphi$ we must have
$E_{\e,\tau}\cap q_{3\e}(\jv)\not=\emptyset$,
Since otherwise $\mathcal{F}_{\e,\tau}^\varphi(E_{\e,\tau}\cup q_\e(\jv),E')<\mathcal{F}_{\e,\tau}^\varphi(E_{\e,\tau},E')$.
\end{proof}

\begin{remark}\label{tau-fast-remk}
The regime $\tau/\e\to0$ is completely characterized by the previous lemma.
Indeed, in this case, when $\tau$ and $\e$ are small enough, $B_{4\tau}\cap\e\Z^2=\{(0,0)\}$ and the minimizing movement is trivially $E(t)\equiv \{(0,0)\}$.
This degenerate evolution is called a \emph{pinned motion}. We will focus on such motions in Section~\ref{sec:limitmotion}, where we will also introduce a ``pinning threshold''.
\end{remark}

\subsection{$\Gamma$-convergence of interaction energies}\label{G-lim-subsec}

This section is devoted to the study of the asymptotic behavior of energies $\e F_\e$.
To this end, we associate to any admissible set $E\in\D_\e$ the corresponding characteristic function $\chi_E\in L^\infty(\R^2)$ and compute the $\Gamma$-limit with respect to the local weak$^*$-topology.
We then generalize energies in \eqref{discrper} by considering $F_\e:L^\infty(\R^2)\to(-\infty,+\infty]$ as
\begin{equation}\label{discrper-gen}
F_\e(u)=\begin{cases}
F_\e(E) & u=\chi_E,\, E\in\D_\e \\
+\infty & \text{otherwise,}
\end{cases}
\end{equation}
with a slight abuse of notation. 
\begin{theorem}\label{gamma-limit-per}
Let $F_\varepsilon$ be defined as in \eqref{discrper-gen}, and set $G_\e:=\varepsilon F_\varepsilon$. 
Then $G_\e$ $\Gamma$\hbox{-}converge as $\varepsilon\to0$ to the energy
$$G(u) = \begin{cases}\displaystyle
4\int_{\R^2}\Big(\Big|u(\x)-\frac{1}{2}\Big|-\frac{1}{2}\Big) \mathrm{d}\x & u\in L^\infty(\R^2;[0,1]) \\
+\infty & \text{otherwise,}
\end{cases}$$
with respect to the local weak$^*$-topology.
\end{theorem}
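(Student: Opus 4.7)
The strategy is to prove the $\Gamma$-liminf and $\Gamma$-limsup inequalities directly. First, note the algebraic identity $|u-\tfrac12|-\tfrac12=-\min\{u,1-u\}$, so that $G(u)=-4\int_{\R^2}\min\{u,1-u\}\,\mathrm{d}\x$. Along any sequence $u_\varepsilon\weakcs u$ with $G_\varepsilon(u_\varepsilon)$ bounded, we have $u_\varepsilon=\chi_{E_\varepsilon}$ with $E_\varepsilon\in\D_\varepsilon$ and $G_\varepsilon(u_\varepsilon)=-\varepsilon^2 N_\varepsilon$, where $N_\varepsilon$ is the number of \emph{mixed} edges of $\varepsilon\Z^2$ (unordered pairs $\{\iv,\jv\}$ with $|\iv-\jv|=\varepsilon$ and exactly one endpoint in $E_\varepsilon$). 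The entire analysis thus reduces to controlling $N_\varepsilon$.

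For the \emph{liminf inequality} the crucial observation is a double-charging bound: every mixed edge has one endpoint in $E_\varepsilon\cap\varepsilon\Z^2$ and one in $\varepsilon\Z^2\setminus E_\varepsilon$; since every lattice point has nearest-neighbor degree $4$, for every Borel $Q\subset\R^2$
\begin{equation*}
\varepsilon\,\mathcal{H}^1(\partial E_\varepsilon\cap Q)\le 4\min\{|E_\varepsilon\cap Q|,|Q\setminus E_\varepsilon|\}+O(\varepsilon\,\mathcal{H}^1(\partial Q)).
\end{equation*}
Summing over a partition of $\R^2$ into $\delta$-squares $\{Q_\delta^j\}$ yields
\begin{equation*}
-G_\varepsilon(u_\varepsilon)\le\sum_j 4\min\Bigl\{\textstyle\int_{Q_\delta^j}u_\varepsilon,\int_{Q_\delta^j}(1-u_\varepsilon)\Bigr\}+O(\varepsilon/\delta),
\end{equation*}
where the error absorbs per-box boundary corrections and edges on grid lines. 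Passing $\varepsilon\to 0$ at fixed $\delta$, the weak-$*$ convergence gives $\int_{Q_\delta^j}u_\varepsilon\to\int_{Q_\delta^j}u$; then letting $\delta\to 0$ the sum is a Riemann-type approximation of $4\int_{\R^2}\min\{u,1-u\}\,\mathrm{d}\x=-G(u)$ (by Lebesgue differentiation and continuity of $\min$), proving $\liminf_\varepsilon G_\varepsilon(u_\varepsilon)\ge G(u)$.

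For the \emph{limsup inequality} I construct a recovery sequence. For constant $u\equiv c\in[0,\tfrac12]$ on a compact region take $E_\varepsilon$ as the union of $\varepsilon$-squares centered at a density-$c$ equidistributed subset $S_\varepsilon\subset\varepsilon\Ze$ of the even sublattice (e.g., $\lfloor ck_\varepsilon^2\rfloor$ centers in each cell of side $k_\varepsilon\varepsilon$, with $k_\varepsilon\to\infty$ and $k_\varepsilon\varepsilon\to 0$). Since nearest neighbors in $\varepsilon\Z^2$ have opposite parity, every chosen square is isolated and contributes exactly $4\varepsilon$ to the perimeter, so $G_\varepsilon(u_\varepsilon)\to-4c=G(c)$ and $u_\varepsilon\weakcs c$. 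For $c\in[\tfrac12,1]$ use the complementary construction on $\varepsilon\Zo$ (sparse holes in a full background). For general $u\in L^\infty(\R^2;[0,1])$, approximate $u$ by piecewise constants $u_\delta$ on a $\delta$-grid, run the above blockwise, and diagonalize $\delta=\delta(\varepsilon)\to 0$ with $\varepsilon/\delta\to 0$; the $L^1_{\rm loc}$-Lipschitz continuity of $G$ (from the $1$-Lipschitz continuity of $\min\{\cdot,1-\cdot\}$) then gives $G(u_\delta)\to G(u)$.

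The \emph{main obstacle} is the interface analysis in the recovery construction at the switch $c=\tfrac12$: adjacent blocks with $c_A<\tfrac12$ (sparse even-parity fills) and $c_B>\tfrac12$ (sparse odd-parity holes) must be glued so that parity mismatches across the interface do not create extra perimeter beyond $O(\delta)$ per interface---that is, $O(\varepsilon/\delta)$ contribution per unit area to $G_\varepsilon$---which an edge-by-edge bookkeeping along the interface confirms. The liminf, by contrast, is an almost immediate consequence of the mixed-edge count. The case $G(u)=-\infty$ (possible when $u$ stays bounded away from $\{0,1\}$ on a set of infinite measure) is handled in the recovery by performing the construction on a sequence of exhausting compacta.
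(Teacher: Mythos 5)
Your proof is correct and follows the same overall architecture as the paper's: localize on a mesoscopic $\delta$-grid, bound the perimeter in each box by $4\min\{\text{occupied volume},\text{empty volume}\}$ up to boundary errors, and recover a general $u$ by checkerboard-type configurations at the prescribed local density on cells of an intermediate scale (the paper uses $\sqrt{\e}$-cells; your $k_\e\e$-cells with $k_\e\to\infty$, $k_\e\e\to0$ are the same device). The one genuine difference is in how the per-box estimate for the liminf is obtained: the paper constructs an explicit energy-decreasing rearrangement $E_\e^\delta$ of the centers inside each $\delta$-square (into a sub- or super-checkerboard of the same cardinality) and then evaluates the rearranged energy, whereas you bypass the rearrangement entirely with the degree-$4$ double-counting of mixed edges, charging each to whichever endpoint class is smaller. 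Your version is more elementary and avoids having to verify that the rearrangement does not increase the energy; what it gives up is the explicit identification of the per-box maximizers, which the paper reuses verbatim for the recovery sequence. Your identification of the interface/parity-matching losses between adjacent blocks as the only point requiring bookkeeping in the limsup is accurate, and the $O(\e/\delta)$ control you claim is the same one the paper invokes for interactions between adjacent $\delta$-squares.
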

\begin{proof}
It will suffice to prove the result for $u\in L^\infty(\R^2;[0,1])$, otherwise the assertion is trivial. 
We can assume, without loss of generality, that $u$ has compact support, and let $E_\e\in\D_\e$ be a sequence of sets such that $\chi_{E_\e}$ locally weakly-$^*$ converge to $u$.

\begin{figure}[ht]
\begin{center}
\includegraphics[width=0.6\textwidth]{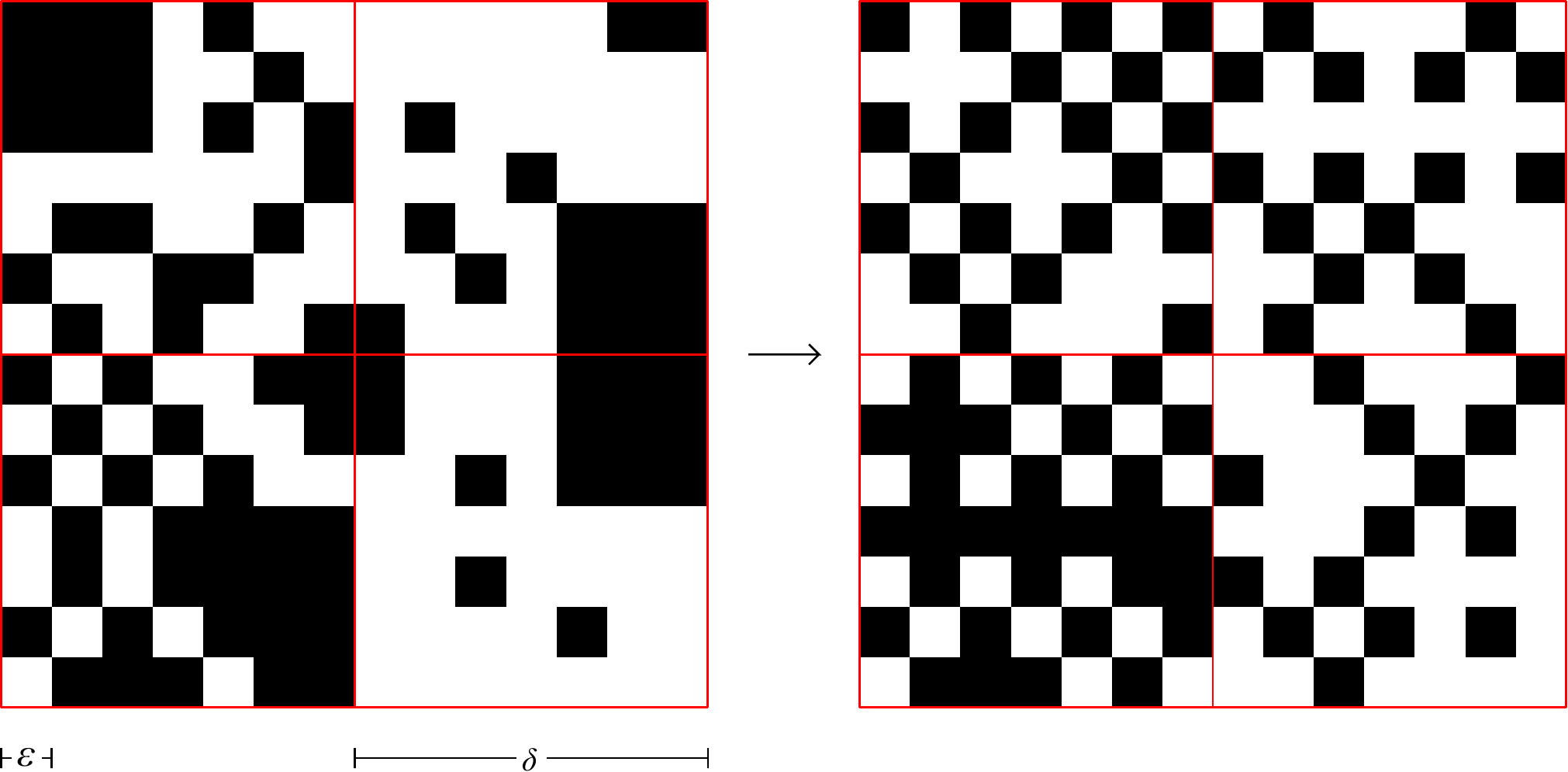}
\caption{On the left the set $E_\e$, on the right we exhibit a set $E_\e^\delta$ satisfying (i) and (ii).}
\label{fig-thm5}
\end{center}
\end{figure}
We now provide a rearrangement of the centers of $E_\e$ which is energy decreasing.
Let $\delta>0$ be fixed.
We consider the lattice $\delta\Z^2$ and sets $E_\e^\delta\in\D_\e$ satisfying $\#(Z_\e(E_\e^\delta)\cap q_\delta(\iv))=\#(Z_\e(E_\e)\cap q_\delta(\iv))$ and with the following properties:

(i) if $\e^2 \#(Z_\e(E_\e)\cap q_\delta(\iv))\le\delta^2/2$ then $Z_\e(E_\e^\delta\cap q_\delta(\iv))\subset\e\Z^2_e$,

(ii) if $\e^2 \#(Z_\e(E_\e)\cap q_\delta(\iv))>\delta^2/2$ then $Z_\e(E_\e^\delta\cap q_\delta(\iv))\supset\e\Z^2_e\cap q_\delta(\iv)$,

\noindent
for every $\iv\in\delta\Z^2$ (see Figure \ref{fig-thm5}).

Now, for every $E\in\D_\e$ and $F\in\mathcal{X}$ we define
$$
F_\e(E;F) = F_\e(E\cap E(\e\Z^2\cap F))\,,
$$
and analogously $G_\e(E;F)$.
In both cases (i) and (ii) we have $F_\e(E_\e;q_\delta(\iv))\ge F_\e(E_\e^\delta;q_\delta(\iv))$.
Since the contribution of the interaction between two adjacent $\delta$-squares $q_\delta(\iv)$ and $q_\delta(\jv)$ is less than $2\delta\e$ and the number of $\delta$-squares whose intersection with $\supp(u)\not=\emptyset$ is proportional to $1/\delta^2$, we get
$$G_\e(E_\e) \ge G_\e(E_\e^\delta)-C\frac{\e}{\delta}$$
for some positive constant $C$.
Now, from the convergence of $\chi_{E_\e}$ to $u$, for every $\iv\in\delta\Z^2$ we get
\begin{align}
&G_\e(E_\e^\delta;q_\delta(\iv))+O(\e)=-4|E_\e\cap q_\delta(\iv)|+O(\e)=-4\int_{q_\delta(\iv)}u(\x)\,\mathrm{d}\x:=u_\delta(\iv)\,, \label{Chapter3:(1)}\\
&G_\e(E_\e^\delta;q_\delta(\iv))+O(\e)=-4|q_\delta(\iv)\setminus E_\e|+O(\e)=-4\int_{q_\delta(\iv)}(1-u(\x))\,\mathrm{d}\x:=u_\delta(\iv)\,\label{Chapter3:(2)}
\end{align}
in cases (i) and (ii), respectively. After identifying $u_\delta$ with its piecewise-constant interpolation, taking the limit as $\e\to0$ first, we get
$$\liminf_{\e\to0} G_\e(E_\e) \ge \int_{\R^2}u_\delta(\x)\,\mathrm{d}\x\,,$$
and then taking the limit as $\delta\to0$ we obtain the liminf inequality.

The construction of a recovery sequence follows an analogous argument. Let $u\in L^\infty(\R^2;[0,1])$ have a compact support.
Consider the lattice $\sqrt{\e}\Z^2$
and define
$$u_\e(\iv) = \frac{1}{\e}\int_{q_{\sqrt{\e}}(\iv)} u(\x)\, \mathrm{d}\x\, , \quad \text{for every } \iv\in\sqrt{\e}\Z^2.$$
As a recovery sequence we will choose $E_\e$ having the same mean (unless a small error) of $u$ in every $\sqrt{\e}$-square with maximal perimeter term.
Indeed, we can take a set $E_\e\in\D_\e$ satisfying $\#\big(Z_\e(E_\e)\cap q_{\sqrt{\e}}(\iv)\big) = \lceil u_\e(\iv)/\e\rceil$ and such that:
\smallskip

(i) if $u_\e(\iv)\le 1/2$, then $Z_\e(E_\e)\cap q_{\sqrt{\e}}(\iv)\subset\e\Z^2_e$;

(ii) if $u_\e(\iv)>1/2$, then $Z_\e(E_\e)\cap q_{\sqrt{\e}}(\iv)\supset\e\Z^2_e\cap q_{\sqrt{\e}}(\iv)$.

\noindent Then, $\chi_{E_\e}$ weakly-$^*$ converge to $u$ and
$$
G_\e(E_\e;q_{\sqrt{\e}}(\iv)) + O(\e) = \begin{cases}\displaystyle
-4\int_{q_{\sqrt{\e}}(\iv)}u(\x) \mathrm{d}\x & \text{if } u_\e(\iv)\le \frac{1}{2} 
\\
\displaystyle
-4\int_{q_{\sqrt{\e}}(\iv)}(1-u(\x)) \mathrm{d}\x & \text{if } u_\e(\iv)> \frac{1}{2}
\end{cases}$$
for every $\iv\in\sqrt{\e}\Z^2$,
which proves that $\chi_{E_\e}$ is a recovery sequence and concludes the proof.
\end{proof}

\begin{remark}\label{recseq-remk}
Note that, in the proof of Theorem \ref{gamma-limit-per} we have exhibited a recovery sequence whose supports $E_\e$ also converges to $E=\supp(u)$ in the Hausdorff sense.
This remark allows us to reduce the computation of the $\Gamma$-limit of $G_\e$ to functions weakly-$^*$ converging to $u$ having supports in $\D_\e$ converging to $E$ with respect to the Hausdorff distance.
\end{remark}

\begin{remark}[$\Gamma$-limit on characteristic functions]\label{remk-haus-top}
An immediate consequence of Theorem \ref{gamma-limit-per} is that, among all the functions having the same support $E$, the ground state of the energy $G$ is achieved by the simple function $1/2\,\chi_E$.
In particular, since any family of sets $\{E_\e\}$ converging in the Hausdorff sense to $E$ are such that $\chi_{E_\e}$ is weakly-$^*$ compact, from Theorem \ref{gamma-limit-per} we infer that
$$\Gamma(d_{\mathcal{H}})\text{-}\lim_{\e\to0} G_\e(E)=-2|E|,$$
once noted that the recovery sequences are $\e$-checkerboard sets.
\end{remark}

\subsection{Convergence of the minimizing-movement scheme}
We prove that when $\e/\tau\to0$ every minimizing movement of scheme \eqref{MM-scheme-unc} may be seen as the solution of a continuum problem having a gradient-flow structure with respect to the limit energy.
In this regime the monotonicity constraint is not needed to obtain a completely characterized limit motion.
A straightforward consequence is that the solution of the unconstrained problem corresponds to that of the monotone scheme \eqref{MM-scheme}.

\begin{theorem}\label{fast-conv-thm}
Let $F_\e, D_\e^\varphi$ and $\mathcal{F}_{\e,\tau}^\varphi$ be as in \eqref{discrper}, \eqref{dissipation-def} and \eqref{energy}, respectively.
Then there exists a unique minimizing movement of the unconstrained scheme \eqref{MM-scheme-unc}
at regime $\e/\tau\to0$ and it satisfies
$$E(t)=B_{4t}^\varphi, \quad t\ge0.$$
Moreover, for every discrete solution $E_{\e,\tau}$ of \eqref{MM-scheme-unc} we have
$\chi_{E_{\e,\tau}(t)}\weakcs\frac{1}{2}\chi_{B_{4t}^\varphi}$ for all $ t\ge0$ as $\e\to0$.
\end{theorem}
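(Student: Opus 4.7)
The plan is to split the proof into two parts: the Hausdorff convergence $E_{\e,\tau}(t)\to B_{4t}^\varphi$, obtained by iterating Lemma~\ref{compactness-lem}, and the weak-* convergence $\chi_{E_{\e,\tau}(t)}\weakcs\tfrac12\chi_{B_{4t}^\varphi}$, obtained by combining the $\Gamma$-convergence of Theorem~\ref{gamma-limit-per} with an inductive energy estimate that favors $\e$-checkerboard configurations.

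For the Hausdorff convergence, Lemma~\ref{compactness-lem} applied at each step of \eqref{MM-scheme-unc} gives $d_{\mathcal{H}}(E_{\e,\tau}^{k+1},E_{\e,\tau}^k+B_{4\tau}^\varphi)<3\sqrt{2}\,\e$, uniformly in the choice of minimizer. Using $k\,B_{4\tau}^\varphi=B_{4k\tau}^\varphi$ (from the triangle inequality for $\varphi$) together with the Minkowski-translation invariance $d_{\mathcal{H}}(A+C,B+C)\leq d_{\mathcal{H}}(A,B)$, a direct induction yields
\begin{equation*}
d_{\mathcal{H}}\bigl(E_{\e,\tau}^k,\,q_\e+B_{4k\tau}^\varphi\bigr)\leq 3\sqrt{2}\,k\,\e.
\end{equation*}
For $k=\lfloor t/\tau\rfloor$, this error is bounded by $3\sqrt{2}\,t\,(\e/\tau)\to 0$ in the regime $\e/\tau\to 0$, and $q_\e+B_{4k\tau}^\varphi\to B_{4t}^\varphi$ in Hausdorff distance as $\e\to 0$ and $k\tau\to t$. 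Hence $E_{\e,\tau}(t)\to B_{4t}^\varphi$ in Hausdorff distance, irrespective of the minimizer chosen at each step, giving uniqueness of the minimizing movement.

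For the weak-* convergence, I would proceed inductively on $k$, showing that $\chi_{E_{\e,\tau}^k}\weakcs\tfrac12\chi_{B_{4k\tau}^\varphi}$; the case $k=0$ is trivial. Assuming that $E_{\e,\tau}^{k-1}$ is asymptotically an $\e$-checkerboard filling $B_{4(k-1)\tau}^\varphi$, compare the minimum at step $k$ against the natural checkerboard competitor $\tilde E\in\D_\e$ of the same parity filling $E_{\e,\tau}^{k-1}+B_{4\tau}^\varphi$, of the type described in Remark~\ref{recseq-remk}. Its symmetric difference with $E_{\e,\tau}^{k-1}$ sits in the annulus $B_{4k\tau}^\varphi\setminus B_{4(k-1)\tau}^\varphi$, of volume $O(t\tau)$ and at $\varphi$-distance at most $4\tau$ from $\partial E_{\e,\tau}^{k-1}$, so $\tfrac{1}{\tau}D_\e^\varphi(\tilde E,E_{\e,\tau}^{k-1})=O(t\tau)\to 0$, while $\e F_\e(\tilde E)\to -2|B_{4k\tau}^\varphi|$ from the explicit perimeter count of the checkerboard. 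Since the dissipation of $E_{\e,\tau}^k$ is non-negative, the minimality inequality yields $\limsup_\e \e F_\e(E_{\e,\tau}^k)\leq -2|B_{4k\tau}^\varphi|$. Combined with the $\Gamma$-liminf inequality applied to any weak-* subsequential limit $u$ of $\chi_{E_{\e,\tau}^k}$ and the elementary bound $G(u)\geq -2|B_{4k\tau}^\varphi|$ valid when $u$ is supported in $\overline{B_{4k\tau}^\varphi}$ (from the Hausdorff control of step 1), equality is forced and gives $u=\tfrac12\chi_{B_{4k\tau}^\varphi}$, closing the induction.

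The main obstacle is carrying the checkerboard-density information through the induction: the $O(t\tau)$ single-step dissipation bound requires the previous iterate to be already close to a checkerboard, so the inductive step must propagate both the Hausdorff and weak-* structure simultaneously. Parity consistency of the competitor $\tilde E$, ultimately inherited from $E_{\e,\tau}^0=q_\e$, is the delicate point; non-uniqueness of the minimizers at each step should not break the argument because the energy comparison is independent of the specific selection, and the uniquely identified limit $\tfrac12\chi_{B_{4t}^\varphi}$ then implies convergence of the whole family rather than only of subsequences.
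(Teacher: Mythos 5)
Your treatment of the Hausdorff convergence by iterating Lemma~\ref{compactness-lem} is correct and coincides with the paper's argument. The weak-$^*$ part, however, has a quantitative gap that is fatal already at the first step. You drop the dissipation of the minimizer $E^k_{\e,\tau}$ and treat $\frac1\tau D_\e^\varphi(\tilde E,E^{k-1}_{\e,\tau})$ as a vanishing error, but this term is not an error: as $\e\to0$ at fixed $\tau$ it converges to $\frac1{2\tau}\int_{A_k}d^\varphi(\x,B^\varphi_{4(k-1)\tau})\,\mathrm{d}\x$, which is of order $|A_k|$ --- the same order as the perimeter $-2|A_k|$ gained in that step. Concretely, for $k=1$ and $\varphi$ the Euclidean norm, $\frac{1}{2\tau}\int_{B_{4\tau}}|\x|\,\mathrm{d}\x=\frac{64\pi}{3}\tau^2$ while $2|B_{4\tau}|=32\pi\tau^2$, so your comparison only yields $\limsup_\e\e F_\e(E^1_{\e,\tau})\le-\frac13\cdot2|B_{4\tau}|$, which together with the liminf inequality for $G_\e$ and the support constraint does \emph{not} force $u=\frac12\chi_{B_{4\tau}^\varphi}$ (for instance $u=\frac12\chi_{B'}$ with $|B'|$ slightly above $\frac13|B_{4\tau}|$ remains admissible). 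Over the $\lfloor t/\tau\rfloor$ steps these per-step terms, each ``$O(t\tau)$'', sum to a quantity comparable to $|B_{4t}^\varphi|$ itself, so the accumulated bound degrades by a fixed fraction of the main term. The only way out is to keep the minimizer's dissipation and show it asymptotically equals that of the half-density competitor, i.e.\ to treat the full functional: this is exactly what the paper does, proving $\Gamma$-convergence of $\mathcal{F}_{\e,\tau}^\varphi(\cdot,E^k_{\e,\tau})$ (perimeter \emph{plus} dissipation) to a limit whose unique minimizer is $\frac12\chi_{B_{4(k+1)\tau}^\varphi}$, and then invoking convergence of minimizers.

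The second gap is the one you flag yourself but do not close: for $k\ge2$ the construction of $\tilde E$ ``of the same parity'' and the bound on its dissipation relative to $E^{k-1}_{\e,\tau}$ require the previous iterate to be close to a fixed-parity checkerboard in a pointwise sense. Weak-$^*$ convergence of $\chi_{E^{k-1}_{\e,\tau}}$ to $\frac12\chi_B$ gives no such structure: a set made of solid blocks of side $\sqrt\e$ arranged at density $\frac12$ has the same weak-$^*$ limit and no parity, and relative to it any checkerboard competitor pays dissipation of order $\sqrt\e$ per unit area, so that $\frac1\tau D_\e^\varphi\sim\sqrt\e/\tau$, which need not vanish when only $\e/\tau\to0$ (take $\tau=\e^{3/4}$). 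The paper supplies the missing mechanism through the asymptotic perimeter-optimality estimate \eqref{rec-seq} for the actual minimizer and the rearrangement of competitors on $\sqrt\e$-squares used to establish \eqref{G-limit-scheme}; without an ingredient of this kind your induction cannot propagate the information needed to control the dissipation at the next step.
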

\begin{proof}
The first claim is a direct consequence of Lemma \ref{compactness-lem}.
Indeed, $d_{\mathcal{H}}(E_{\e,\tau}(t),B_{4\lfloor t/\tau\rfloor})<C\lfloor t/\tau\rfloor \e$, which goes to zero locally uniformly at regimes $\e/\tau\to0$.
%
In an analogous way as for \eqref{discrper-gen}, we further generalize the dissipations in Remark \ref{integral-diss-remk} as functionals $D_\e^\varphi:L^\infty(\R^2)\times \mathcal{X}\to[0,+\infty]$ defined by
$$D_\e^\varphi(u,E')=\begin{cases}
D_\e^\varphi(E,E') & u=\chi_E,\, E\in\D_\e \\
+\infty & \text{otherwise.}
\end{cases}$$
Accordingly, we write $\mathcal{F}_{\e,\tau}^\varphi(u,E')=\e F_\e(u)+{1\over\tau} \, D_\e^\varphi(u,E')$ for every $u\in L^\infty(\R^2)$ with $F_\e$ as in \eqref{discrper-gen}.
Since for every sequence $\{E_\e\}\subset\D_\e$
such that $\chi_{E_\e}$ weakly-$^*$ converge to $u$ we have
$$D_\e^\varphi(E_\e,q_\e)\to\int_{\R^2} u(\x)\varphi(\x)\,\mathrm{d}\x,$$
then Theorem \ref{gamma-limit-per} yields that $\mathcal{F}_{\e,\tau}^\varphi$ $\Gamma$\hbox{-}converge, as $\e\to0$, to the functional $\mathcal{F}^\varphi_\tau$ given by
\begin{equation}\label{glim-induction0}
\mathcal{F}^\varphi_\tau(u) := \int_{\R^2} \Big(|4u(\x)-2|-2+\frac{1}{\tau}u(\x)\varphi(\x)\Big)\,\mathrm{d}\x,
\end{equation}
with respect to the weak-$^*$ topology.
Energy $\mathcal{F}^\varphi$ has a unique minimizer in $L^\infty(\R^2;[0,1])$, given by $u=1/2\,\chi_{B_{4\tau}^\varphi}$. 
Indeed
\begin{multline*}
\int_{\R^2} \Big(|4u(\x)-2|-2+\frac{1}{\tau}u(\x)\varphi(\x)\Big)\,\mathrm{d}\x = \int_{\{u\le1/2\}} \Big(\frac{\varphi(\x)}{\tau}-4\Big)u(\x)\,\mathrm{d}\x \\
+\int_{\{u>1/2\}} \Big(4u(\x)-4+\frac{\varphi(\x)}{\tau}u(\x)\Big)\,\mathrm{d}\x.
\end{multline*}
Both integrands are positive for almost every $\varphi(\x)>4\tau$ and are minimized when $u\equiv 1/2$.
Then, since $\Gamma$-convergence implies the convergence of minimum problems (see for instance \cite[Theorem~1.21]{BraGamma}) and the minimum is unique, we get that $\chi_{E_{\e,\tau}^1}$ weakly-$^*$ converges to $u_\tau^1=1/2 \chi_{B_{4\tau}^\varphi}$ as $\e\to0$.
Note also that, by virtue of Lemma~\ref{compactness-lem}, $E_{\e,\tau}^1\to B_{4\tau}^\varphi$ in the Hausdorff sense and moreover by the minimality of $E_{\e,\tau}^1$ and Remark \ref{integral-diss-remk} follows that
\begin{equation}\label{rec-seq}
\begin{aligned}
\e F_\e(E_{\e,\tau}^1) &\le \e F_\e(E_\e) + \frac{1}{\tau} \big( D_\e^\varphi(E_\e,q_\e) - D_\e^\varphi(E_{\e,\tau}^1,q_\e) \big) \\
&\le \e F_\e(E_\e) + \frac{1}{\tau}\int_{\R^2} \big(\chi_{E_\e}(x)-\chi_{E_{\e,\tau}^1}(\x)\big) \big(\varphi(\x)+\e\big) \mathrm{d}\x \le \e F_\e(E_\e)+o(1),
\end{aligned}
\end{equation}
for every $\chi_{E_\e}$ weakly$^*$ converging to $u_\tau^1$.

Now we show the $\Gamma$-convergence of $\mathcal{F}_{\e,\tau}(\cdot,E_{\e,\tau}^1)$, which will allow us to deduce the convergence of the whole scheme by an inductive procedure.
Consider $E_\e\in\D_\e$ such that $\chi_{E_\e}$ are converging weakly-$^*$ to some $u\in L^\infty(\R^2)$.
Mimicking the arguments of the proof of Theorem \ref{gamma-limit-per}, we consider $E_\e'\in\D_\e$ satisfying $\#(Z_\e(E_\e')\cap q_{\sqrt{\e}}(\iv))=\#(Z_\e(E_\e)\cap q_{\sqrt{\e}}(\iv))$ and such that:

(i) if $\#(Z_\e(E_\e)\cap q_{\sqrt{\e}}(\iv))\le \#\big(Z_\e(E_{\e,\tau}^1)\cap q_{\sqrt{\e}}(\iv)\big)$ then $Z_\e(E_\e'\cap q_{\sqrt{\e}}(\iv))\subset Z_\e(E_{\e,\tau}^1)$,

(ii)
 if $\#(Z_\e(E_\e)\cap q_{\sqrt{\e}}(\iv))>\#\big(Z_\e(E_{\e,\tau}^1)\cap q_{\sqrt{\e}}(\iv)\big)$ then $Z_\e(E_\e'\cap q_{\sqrt{\e}}(\iv))\supset Z_\e(E_{\e,\tau}^1)\cap q_{\sqrt{\e}}(\iv)$,

\noindent
for every $\iv\in\sqrt{\e}\Z^2\cap B_{4\tau}^\varphi$, and $Z_\e(E_\e')\setminus B_{4\tau}^\varphi=Z_\e(E_\e)\setminus B_{4\tau}^\varphi$.
Reasoning as in the proof of Theorem \ref{gamma-limit-per} and from \eqref{rec-seq}, $\chi_{E_\e'}$ still weakly-$^*$ converges to $u$ and $\e F_\e(E_\e)+o(1)\ge \e F_\e(E_\e')$.
Then we get
\begin{align*}
\mathcal{F}_{\e,\tau}^\varphi(E_\e,E^1_{\e,\tau})+o(1) &\ge \mathcal{F}_{\e,\tau}^\varphi(E'_\e,E^1_{\e,\tau}) \\
&= \e F_\e(E_\e')+\frac{1}{\tau}D_\e^\varphi(E_\e\setminus B_{4\tau}^\varphi,E_{\e,\tau}^1)+\frac{1}{\tau}\sum_{i\in\sqrt{\e}\Z^2\cap B_{4\tau}^\varphi}D_\e^\varphi(E_\e'\cap q_{\sqrt{\e}}(\iv),E_{\e,\tau}^1).
\end{align*}
Since $D_\e^\varphi(E_\e'\cap q_{\sqrt{\e}}(\iv),E_{\e,\tau}^1)=C\e^3|\#Z_\e(E_\e)-\#Z_\e(E_{\e,\tau}^1)|+O(\e^2)$ and $d_\e^\varphi(\x,\partial E_{\e,\tau}^1)$ converge uniformly to $d^\varphi(\x,B_{4\tau}^\varphi)$ for every $\x\not\in E_\tau^1$, we get that
\begin{equation}\label{G-limit-scheme}
\Gamma\text{-}\lim_{\e\to0}\mathcal{F}_{\e,\tau}^\varphi(u,E^1_{\e,\tau}) = \int_{\R^2}\Big(|4u(\x)-2|-2+\frac{1}{\tau}u(\x)d^\varphi(\x,B_{4\tau}^\varphi)\Big)\mathrm{d}\x,
\end{equation}
since the same argument applies to every recovery sequence $E_\e$.
By arguing as above, we get $\chi_{E_{\e,\tau}^2}$ converge to $1/2\,\chi_{B_{8\tau}^\varphi}$ and by induction the result follows.
\end{proof}

Arguing as in the proof of Theorem \ref{fast-conv-thm} we obtain the following result.

\begin{corollary}\label{fast-conv-cor}
Let $F_\e, D_\e^\varphi$ and $\mathcal{F}_{\e,\tau}^\varphi$ be defined as in \eqref{discrper}--\eqref{energy}.
Then there exists a unique minimizing movement of scheme \eqref{MM-scheme} at regime $\e/\tau\to0$ and it satisfies
$E(t)=B_{4t}^\varphi$ for  $t\ge0$.
Moreover, for every discrete solution $E_{\e,\tau}$ of \eqref{MM-scheme} we have
$\chi_{E_{\e,\tau}(t)}\weakcs\frac{1}{2}\chi_{B_{4t}^\varphi}$ for $t\ge0$ as $\e\to0$.
\end{corollary}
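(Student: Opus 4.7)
The plan is to run the same induction as in the proof of Theorem~\ref{fast-conv-thm}, observing that the monotonicity constraint is automatically compatible with both the compactness bound and the $\Gamma$-limit identification in the regime $\e/\tau\to0$.

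First I would extend Lemma~\ref{compactness-lem} to the constrained problem. Starting from $E_{\e,\tau}^{k+1}\in\argmin\{\mathcal{F}_{\e,\tau}^\varphi(E,E_{\e,\tau}^k):E\in\D_\e,\,E\supset E_{\e,\tau}^k\}$, the same square-by-square variation arguments apply: adding a square $q_\e(\iv)$ with $d_\e^\varphi(\iv,\partial E_{\e,\tau}^k)>4\tau$ strictly increases the energy (so it is not done), while adding an isolated square inside $E_{\e,\tau}^k+B_{4\tau}^\varphi$ strictly decreases it (the removal argument is not needed since the constraint forbids removing squares of $E_{\e,\tau}^k$). Thus $Z_\e(E_{\e,\tau}^{k+1})\subset E_{\e,\tau}^k+B_{4\tau}^\varphi$ and $d_{\mathcal H}(E_{\e,\tau}^{k+1},E_{\e,\tau}^k+B_{4\tau}^\varphi)<3\sqrt2\,\e$. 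Iterating, $d_{\mathcal H}(E_{\e,\tau}(t),q_\e+B^\varphi_{4\lfloor t/\tau\rfloor\tau})< C\lfloor t/\tau\rfloor\e$, which vanishes locally uniformly in $t$ as $\e/\tau\to0$. This yields Hausdorff convergence $E_{\e,\tau}(t)\to B_{4t}^\varphi$ and, together with the inclusion in the checkerboard, $\chi_{E_{\e,\tau}(t)}\weakcs\frac{1}{2}\chi_{B_{4t}^\varphi}$ in the appropriate regime.

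To identify the limit as that of the unconstrained scheme, I would argue inductively that the minimum of $\mathcal{F}_{\e,\tau}^\varphi(\,\cdot\,,E_{\e,\tau}^k)$ is attained, up to vanishing error, at the \emph{unconstrained} minimizer. Suppose by induction that $\chi_{E_{\e,\tau}^k}\weakcs\frac{1}{2}\chi_{B_{4k\tau}^\varphi}$. The $\Gamma$-limit computation leading to \eqref{G-limit-scheme} goes through verbatim under the additional constraint $u_\e:=\chi_{E_\e}\ge\chi_{E_{\e,\tau}^k}$: passing this pointwise bound to the weak-$^*$ limit yields the limit constraint $u\ge\frac{1}{2}\chi_{B_{4k\tau}^\varphi}$, and the unique minimizer of \eqref{G-limit-scheme} in $L^\infty(\R^2;[0,1])$, namely $\frac{1}{2}\chi_{B_{4(k+1)\tau}^\varphi}$, automatically satisfies this constraint since $B_{4k\tau}^\varphi\subset B_{4(k+1)\tau}^\varphi$.

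The only delicate point, and the step I would work out carefully, is that the recovery sequence constructed in Theorem~\ref{gamma-limit-per} and reused in Theorem~\ref{fast-conv-thm} must now also satisfy the discrete constraint $E_\e\supset E_{\e,\tau}^k$. This is achieved by choosing, on each $\sqrt\e$-square $q_{\sqrt\e}(\iv)$, a rearrangement of centers that contains $Z_\e(E_{\e,\tau}^k)\cap q_{\sqrt\e}(\iv)$: since $E_{\e,\tau}^k$ is (inductively) a subset of the even checkerboard and the recovery sequence for a target $u\ge\frac{1}{2}\chi_{B_{4k\tau}^\varphi}$ already charges the even-checkerboard sites, we can perform this choice without altering the local cardinality $\lceil u_\e(\iv)/\e\rceil$ nor worsening the perimeter contribution. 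The estimate \eqref{rec-seq} remains valid, so the constrained $\Gamma$-limit equals the unconstrained one. Convergence of minima (as in \cite[Theorem~1.21]{BraGamma}) then yields $\chi_{E_{\e,\tau}^{k+1}}\weakcs\frac{1}{2}\chi_{B_{4(k+1)\tau}^\varphi}$ and, together with the Hausdorff estimate from the first paragraph, $E(t)=B_{4t}^\varphi$. An induction on $k$ completes the proof.
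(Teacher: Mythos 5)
Your proposal is correct and follows essentially the same route as the paper, which gives no separate proof for this corollary beyond the remark that one argues as in Theorem~\ref{fast-conv-thm}; your writeup is a faithful elaboration of exactly that argument, checking compatibility of the monotonicity constraint with the compactness bound, the $\Gamma$-limit identification, and the recovery-sequence construction.
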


\begin{remark}\label{e-fast-remk}
Arguing as in Remark \ref{remk-haus-top}, for any $E'\in\mathcal{X}$ and every $E_\e'$ converging to $E'$ in $d_\mathcal{H}$ such that $\e F(E_\e')\to-2|E'|$ we get, from \eqref{G-limit-scheme}, that
$$
\Gamma(d_\mathcal{H})\text{-}\lim_{\e\to0} \mathcal{F}_{\e,\tau}^\varphi(E,E'_\e) = \mathcal{F}_\tau^\varphi(E,E') := -2|E|+\frac{1}{2\tau}\int_{E\triangle E'} d^\varphi(x,E') dx.
$$
Note that the minima of $\mathcal{F}_\tau^\varphi(\cdot,E')$ are solutions of
$$
\Big(-2+\frac{1}{2\tau} d^\varphi(x,E')\Big)\nu_E(x)\mathcal{H}^1\mres\partial E = 0;
$$
that is, $E\in\mathcal{X}$ such that $d^\varphi(x,E')\equiv 4\tau$ for $\mathcal{H}^1$-almost every $x\in\partial E$.
This gives that the limit scheme
\begin{equation}\label{scheme-remk-fast}
\begin{cases}
E^0_\tau=\{(0,0)\}, \\
E^{k+1}_\tau\in\argmin{E\in\mathcal{X}}\mathcal{F}_\tau^\varphi(E,E^k_\tau).
\end{cases}
\end{equation}
is solved by $E^k_\tau = B_{4k\tau}^\varphi$.
Hence, by Theorem \ref{fast-conv-thm} and Corollary \ref{fast-conv-cor} the minimizing movements of schemes \eqref{MM-scheme} and \eqref{MM-scheme-unc} at regimes $\e/\tau\to0$ are solutions of limit scheme \eqref{scheme-remk-fast}.
\end{remark}

\section{The critical regime: a microscopic checkerboard structure}\label{sec:scaledcheckbd}
So far, we have shown that scheme \eqref{MM-scheme} is completely characterized in the regimes $\tau/\e\to0$ (Remark \ref{tau-fast-remk}) and $\e/\tau\to0$ (Remark \ref{e-fast-remk}).
Throughout this section we will study the regimes where $\e/\tau$ has a non-zero finite limit, which turn out to be richer of features than the others.

Without loss of generality we consider only the case $\e=\alpha\tau$, where $\alpha>0$ is a positive constant.
The main goal is to determine any solution to the iterative variational scheme \eqref{MM-scheme}.
Within this regime, instead of solving a family of schemes depending on $\e$, by a rescaling argument we can solve one minimization scheme in the unique environment $\Z^2$.
Indeed, for every $E,F\in\mathcal{D}_\e$, the energies defined in \eqref{energy} can be rewritten as
\begin{align*}
\mathcal{F}_{\e,\tau}^\varphi(E,F) &= -\e\mathcal{H}^1(\partial E) + \frac{1}{\tau} D_\e^\varphi(E,F) = -\e\mathcal{H}^1(E) + \frac{\e^2}{\tau} \sum_{i\in Z_\e(E)\triangle Z_\e(F)}d_\e^\varphi(\iv,\partial F) \\
&= \e \Big( -\mathcal{H}^1(\partial E) + \alpha \sum_{\iv\in Z_\e(E)\triangle Z_\e(F)}d_\e^\varphi(\iv,\partial F) \Big) = \e^2 \mathcal{F}_\alpha^\varphi\Big(\frac{1}{\e}E,\frac{1}{\e}F\Big),
\end{align*}
where we have defined $\mathcal{F}_\alpha^\varphi:\D\times\D\to\R$ as
\begin{equation}\label{scaled}
\mathcal{F}_\alpha^\varphi(E',F')=-\mathcal{H}^1(\partial E')+\alpha \sum_{\iv\in Z(E')\triangle Z(F')} d^\varphi(\iv,\partial F').
\end{equation}
Thus, the solutions of \eqref{MM-scheme} are $E_{\e,\tau}^k=\e E_\alpha^k$ for every $\e>0$, $k\in\N$, where $\{E_\alpha^k\}$ solves the scaled scheme
\begin{equation}\label{MM-scheme2}
\begin{cases}\displaystyle
E_\alpha^0=q, \\
E_\alpha^{k+1}\in\argmin{E\in\D,\, E\supset E_\alpha^k}\mathcal{F}_\alpha(E,E_\alpha^k), & k\ge0.
\end{cases}
\end{equation}

We will prove that scheme \eqref{MM-scheme2} has a unique solution $\{E_\alpha^k\}$ whenever $\alpha$ is outside a countable set (see Remark \ref{non-unique} below).
If $\alpha$ is greater than a threshold value $\tilde\alpha>0$, the corresponding solution is trivially $E_\alpha^k\equiv q$, and we will say that the motion is \emph{pinned}.
If instead $\alpha$ is below the \emph{pinning threshold} (see Definition~\ref{pin-def}) the solutions $\{E_\alpha^k\}$ have a checkerboard structure; that is, $E_\alpha^k\in\A^e$ for every $k\in\N$, and they are obtained by the iterative formula
$$
Z(E_\alpha^{k+1})=Z(E_\alpha^k)+Z(E_\alpha^1), \quad \text{for every } k\in\N,\, k\ge1.
$$
We call this process \emph{nucleation from the origin}, and the lattice set $Z(E_\alpha^1)$, which we call the \emph{nucleus} of the process, completely characterizes the motion.
The limit evolution will be a motion of expanding polygons with constant velocity; both the velocity and the shape of the limit sets will be a discretization (depending on $\alpha$) of those of the minimizing movement of \eqref{MM-scheme} at regime $\e/\tau\to0$ studied in Section \ref{sec:fastconvergence}. This result will be proven under a technical assumption on the ``convexity'' of the nucleus $Z(E_\alpha^1)$ (cf. \eqref{monotone-edges}) which will allow us to use a localization method to solve any minimization problem of the scheme \eqref{MM-scheme2}.

The following result is a rereading of Lemma \ref{compactness-lem} in the scaled setting.
We note that, as for Lemma \ref{compactness-lem}, the following result holds for every norm.

\begin{lemma}\label{lemball}
Let $\mathcal{F}_\alpha^\varphi:\D\times\D\to\R$ be as in \eqref{scaled}, where $\D$ is defined as in \eqref{unions} with $\e=1$.
Then, for any given $E'\in\D$ it holds that
\begin{equation}\nonumber
\mathcal{F}_\alpha^\varphi\big(E(\I),E'\big)\le \mathcal{F}_\alpha^\varphi(E,E'), \quad \text{where } \I=\Big\{\iv\in Z(E) \,:\, d^\varphi(\iv,\partial E')\le \frac{4}{\alpha}\Big\}
\end{equation}
for every $E\in\D$.
In particular, for every $\{E_\alpha^k\}$ discrete solution of the scheme \eqref{MM-scheme2}, there holds
\begin{equation}\label{disc-sol-bound}
Z(E^{k+1}_\alpha)\subset\Big\{\iv\in\Z^2 \,:\, d^\varphi(\iv,\partial E^{k}_\alpha)\le \frac{4}{\alpha}\Big\}, \quad \text{for every } k\in\N.
\end{equation}
\end{lemma}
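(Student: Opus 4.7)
The plan is to reduce the first inequality to a one-point removal statement and then iterate. Concretely, for any $\iv\in Z(E)\setminus\I$ (so that $d^\varphi(\iv,\partial E')>4/\alpha$), I would show
\[
\mathcal{F}_\alpha^\varphi(E\setminus q_1(\iv),E')\le \mathcal{F}_\alpha^\varphi(E,E'),
\]
with strict inequality. Removing all such $\iv$ one by one, which is a finite process since $Z(E)\setminus\I$ is itself bounded by a previous application of the same ideas (or argued directly: if $Z(E)$ is infinite the energy is $-\infty$ on a single connected component only if $E'$ extends to infinity, and we can restrict to a large ball), produces $E(\I)$ and delivers the first assertion by telescoping.

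For the one-removal step I would analyse perimeter and dissipation separately. If $\iv$ has $k\in\{0,1,2,3,4\}$ lattice neighbours in $Z(E)$ at unit $\ell^\infty$-distance, then removing the square $q_1(\iv)$ exposes $k$ internal sides and erases $4-k$ external sides, so
\[
\mathcal{H}^1(\partial(E\setminus q_1(\iv)))-\mathcal{H}^1(\partial E) = 2k-4,
\]
and the contribution $-\mathcal{H}^1(\partial E)$ to $\mathcal{F}_\alpha^\varphi$ therefore increases by at most $4$. For the relevant case $\iv\in Z(E)\setminus Z(E')$ (the one that matters in the monotone scheme \eqref{MM-scheme2}, where competitors satisfy $E\supset E'$), the point $\iv$ leaves the symmetric difference $Z(E)\triangle Z(E')$, so the dissipation drops by the single term $\alpha\, d^\varphi(\iv,\partial E')>4$. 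The overall variation is thus $\le (4-2k)-\alpha\, d^\varphi(\iv,\partial E')<0$, which proves the one-removal inequality and hence the first assertion.

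The containment \eqref{disc-sol-bound} then follows by minimality and the monotonicity constraint in \eqref{MM-scheme2}: if $E_\alpha^{k+1}$ contained an $\iv\in Z(E_\alpha^{k+1})\setminus Z(E_\alpha^k)$ with $d^\varphi(\iv,\partial E_\alpha^k)>4/\alpha$, then $E_\alpha^{k+1}\setminus q_1(\iv)$ would still include $E_\alpha^k$ (since $\iv\notin Z(E_\alpha^k)$), hence be an admissible competitor, and by the above would strictly lower the energy, contradicting the definition of $E_\alpha^{k+1}$.

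The main obstacle, conceptual rather than technical, is the role of centres $\iv\in Z(E)\cap Z(E')$: under the piecewise definition of $d^\varphi(\cdot,\partial E')$ such points have ``distance'' measured towards the complement of $E'$, so in principle removing one of them could add, rather than subtract, a dissipation term $\alpha\, d^\varphi(\iv,\partial E')$. This is why the statement is meaningful only alongside the monotonicity constraint of the scheme, under which removals can always be restricted to the benign case $\iv\in Z(E)\setminus Z(E')$; this is precisely the scaled analogue of the argument in Lemma~\ref{compactness-lem}.
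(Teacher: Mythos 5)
Your proof is correct and follows essentially the same route as the paper's: the paper's one-line argument is exactly the single-square variation $-4+\alpha\, d^\varphi(\iv,\partial E')$ for an isolated square, of which your $(4-2k)-\alpha\, d^\varphi(\iv,\partial E')$ computation is the refinement (worst case at $k=0$) needed when the removed square has neighbours. Your closing caveat about centres in $Z(E)\cap Z(E')$ is well taken — the paper's stated variation formula is likewise only valid for $\iv\notin Z(E')$, and the lemma is only invoked for checkerboard (or single-square) sets $E'$ under the monotonicity constraint, where that problematic case does not arise.
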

\begin{proof}
The result immediately follows from the fact that for every $E'\in\D$ the variation of adding an isolated square to any $E\in\D$ is $\mathcal{F}_\alpha^\varphi(E\cup q(\iv),E')-\mathcal{F}_\alpha^\varphi(E,E')=-4+\alpha d^\varphi(\iv,\partial E').$
\end{proof}

\begin{remark}[non-uniqueness]\label{non-unique}
Note that for every $\iv\in\Z^2$ such that $d^\varphi(\iv,\partial E^{k}_\alpha)=\frac{4}{\alpha}$ (if any), the energy contribution of the square $q(\iv)$ is zero; that is,
$$\mathcal{F}_\alpha^\varphi(E^{k+1}_\alpha\cup q(\iv), E^k_\alpha)=\mathcal{F}_\alpha^\varphi(E^{k+1}_\alpha\setminus q(\iv),E^k_\alpha).$$
Therefore, in this case, there is non-uniqueness of solutions for the problem \eqref{MM-scheme2}.
Note that, if $\varphi(\x)=\frac{4}{\alpha}$ has no integer solutions then, by the periodicity of $\Z^2$, the same holds true for equation $d^\varphi(\x,\partial E)=\frac{4}{\alpha}$ for every $E\in\D$.
This in particular implies that the $k$-th minimization problem of the scheme \eqref{MM-scheme2} has non-unique solution if and only if the first minimization problem has non-unique solution.\end{remark}

With the previous remark in mind, we define the \emph{singular set} $\Lambda^\varphi$ as
\begin{equation}\label{singularset}
\Lambda^\varphi:=\Big\{\frac{4}{\varphi(\iv)} \,:\, \iv\in\Ze\setminus\{(0,0)\}\Big\}\,.
\end{equation}
Note that the set $\Lambda^\varphi$ is countable and has a unique accumulation point in $0$.

\begin{example}\label{bif-ex}
We take $\varphi$ as the $\ell^\infty$-norm and choose $\alpha=4$, so that $\alpha\in\Lambda^\varphi$ as defined in \eqref{singularset}.
In this case, the set of lattice points having zero energy is $\{\iv\in\Z^2 \,:\, \|i\|_\infty=1\}$.
This yields that $\mathcal{F}_\alpha^\varphi(q,q)=\mathcal{F}_\alpha^\varphi(E,q)=-4$ for every admissible set $E\subset q\cup \{q(\iv) \,:\, |i_1|=|i_2|=1\}$ which implies that the minimum of the first step of \eqref{MM-scheme2} is not unique.
As already noted in Remark~\ref{non-unique}, the same situation arises at each minimization step of the scheme \eqref{MM-scheme2}.

Without entering into the details, we may check that every parametrized family $E:[0,+\infty)\to\mathcal{X}$ of connected sets satisfying
\begin{equation}\label{eq:normalvelocity}
E(0)=\{(0,0)\},
\quad
E(t)\subset E(s) \text{ for every } t<s,
\quad
\|v_{\perp}(t)\|_\infty\le 4 \text{ for every } t\ge0,
\end{equation}
is a minimizing movement, where $v_\perp$ denotes the normal velocity of $\partial E(t)$.
Indeed, for every fixed $t>0$, from \eqref{eq:normalvelocity} we have $E(t)\subseteq [-4t,4t]^2$, since $E(t)$ is connected.
Then, for any $\tau>0$ define
$$E^k_{\e,\tau}:=E(E(k\tau)\cap\e\Ze).$$
Since $E(k\tau)\subseteq [-4k\tau,4k\tau]^2=[-k\e,k\e]^2$, $E^k_{\tau,\e}$ can be obtained by solving the first $k$ steps of \eqref{MM-scheme}.
The corresponding discrete solutions $E_{\e,\tau}(t)$ converge to $E(t)$ as $\e,\tau\to0$ in the Hausdorff sense for every $t>0$, whence $E(t)$ is a minimizing movement.
%
\end{example}

\begin{figure}[h]
\center\includegraphics[scale=.4]{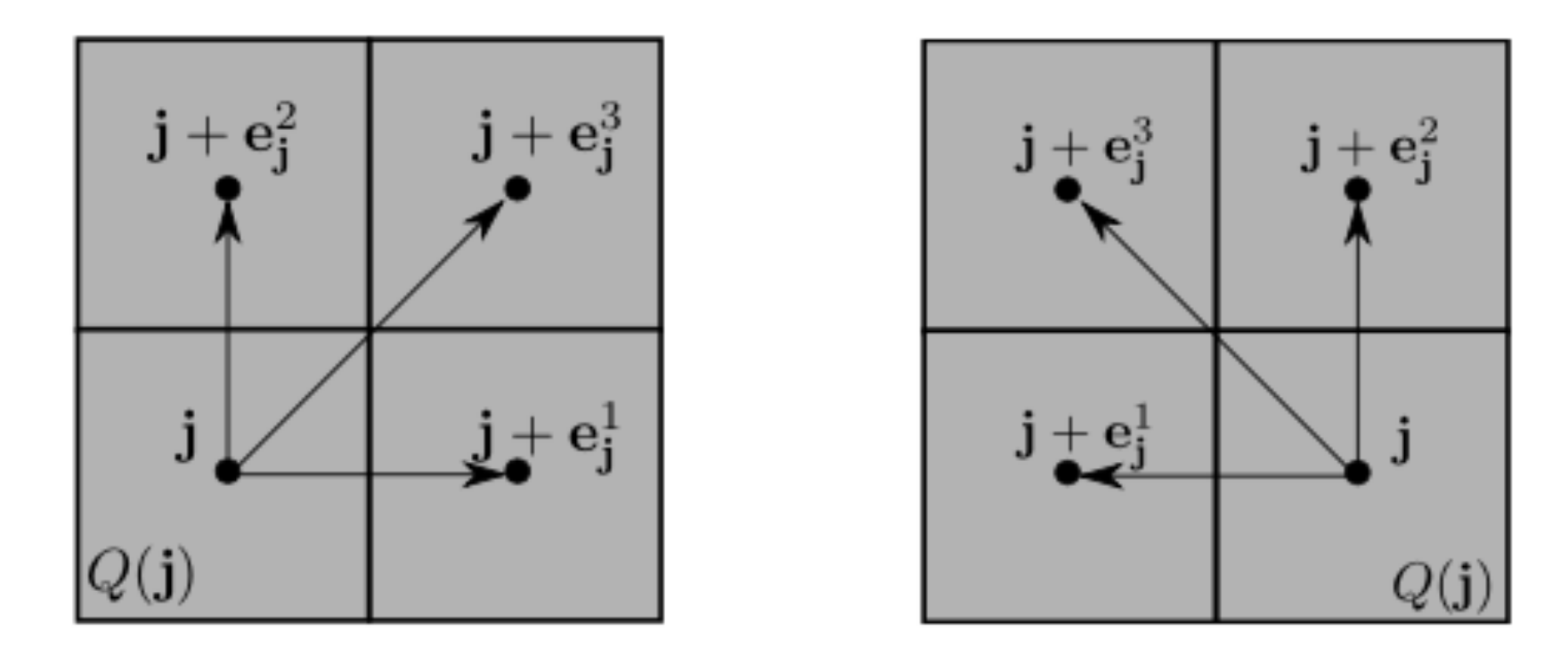}
\caption{Examples of $2\times2$ squares of the covering.
On the left the case $j_1,j_2>0$, on the right $j_1>0$, $j_2<0$.}
\label{fig:quadratone}
\end{figure}
\subsection{A localization argument: the $2\times2$-square tiling} \label{sec:bigsquares}

In order to determine the optimal structure of a minimizer, we will argue locally by defining the following covering of admissible sets.

\begin{definition}[$2\times2$-square coverings]\label{coverings}
For every $\jv=(j_1,j_2)\in\Z^2$, we define the vectors $\ev_\jv^1=(\sgn(j_1),0)$, $\ev_\jv^2=(0,\sgn(j_2))$, $\ev_\jv^3=\ev_\jv^1+\ev_\jv^2$ and, correspondingly, the $2\times2$ square (see Fig.~\ref{fig:quadratone})
\begin{equation}
Q(\jv):=q(\jv)\cup\bigcup_{k=1}^3q(\jv+\ev_\jv^k)\,.
\label{quadratone}
\end{equation}

\begin{figure}[htp]
\centering
\def\svgwidth{150pt}
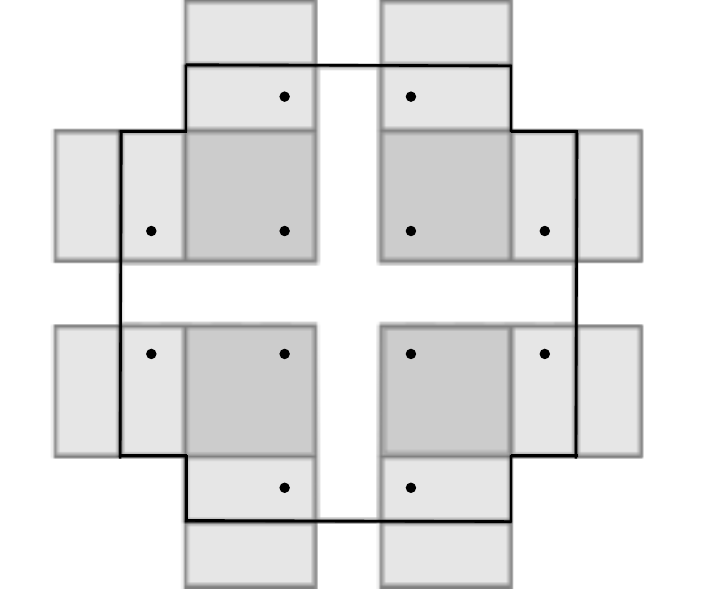
\caption{\small The picture clarifies the $2\times2$-square covering for a set $E$, whose boundary is marked by a bold black line. The darker $2\times2$ squares are in $\mathcal{S}_e^c(E)$, the lighter ones in $\mathcal{S}_e^b(E)$. The areas in white are those left uncovered.
}
\label{bsc}
\end{figure}
Let $E\in\D$ be an admissible set. Then, we define the family of sets
\begin{equation}
\mathcal{S}_e(E):=\big\{Q(\jv):\, \jv\in\Ze\mbox{ with } j_1, j_2 \mbox{ odd},\, Q(\jv)\cap E\neq\emptyset\big\},
\label{family1}
\end{equation}
which is a covering of non-overlapping squares of $E\setminus\mathcal{C}_0$, where $\mathcal{C}_0:=\bigcup\{q(\iv)\,|\,i_1i_2=0\}$ (see Fig.~\ref{bsc}).
We can subdivide the squares of $\mathcal{S}_e(E)$ in those contained in $E$ and those that are not, defining the partition $\mathcal{S}_e(E)=\mathcal{S}^b_e(E)\cup\mathcal{S}^c_e(E)$ where $\mathcal{S}^c_e(E)=\{Q(\jv)\in\mathcal{S}_e(E)\,|\,Q(\jv)\subseteq E\}$ and $\mathcal{S}^b_e(E)=\{Q(\jv)\in\mathcal{S}_e(E)\,|\,Q(\jv)\cap E^c\neq\emptyset\}$.
\end{definition}

\subsection{Choice of the dissipation term}\label{sec:norm}

We restrict our analysis to dissipations \eqref{dissipation-def} induced by an absolute norm $\varphi$; \emph{i.e.}, $\varphi(\x)$ depends only on $|x_1|$ and $|x_2|$, with the additional assumptions 
\begin{enumerate}[font={\normalfont},label={(H\arabic*)}]
\item $\varphi$ is symmetric (or permutation invariant); that is, $\varphi(x_1,x_2)=\varphi(x_2,x_1)$ for every $\x\in\R^2$;\label{ass-H1}
\item $\varphi$ complies with the normalization condition $\varphi(1,0)=\varphi(0,1)=1$. \label{ass-H2}
\end{enumerate}
We refer to an absolute norm with these properties as a \emph{symmetric absolute normalized norm}. The $\ell^p$-norms, $1\leq p\leq\infty$, are examples of such norms. 
This choice is of course motivated by the symmetry properties of the corresponding unit balls, which simplify the computations and the arguments of the proofs.
Moreover, as remarked in Section~\ref{sec:submodularity} an absolute norm is a submodular function on $\R^2$, a property that will be crucial in the sequel as it will
allow to reduce the main minimization problem to a finite number of local minimization problems, taking into account four-point interactions. Indeed, we can infer from \eqref{submodularity} a submodularity-type inequality involving only the norms of the four lattice points contained in any of the $2\times2$ squares of the coverings defined above. Namely, 


\begin{equation}
\varphi(\iv)+\varphi(\iv+\ev_\iv^3)\leq\varphi(\iv+\ev_\iv^1)+\varphi(\iv+\ev_\iv^2),
\label{normestimate}
\end{equation}
for every $\iv\in\Z^2$.

\subsection{The first step of the evolution: checkerboards nucleating from a point} \label{sec:coveringstep1}

With the covering argument of Section~\ref{sec:bigsquares} and the key norm inequality \eqref{normestimate} at hand, we are now in position to give the explicit characterization of the first step $E^1_\alpha$ of the discrete evolution, showing that it is an even checkerboard. 
A local analysis by means of the $2\times2$-square tilings will allow us to prove, with the following Proposition~\ref{firststep}, that the set of centers of $E^1_\alpha$ coincides with the discretization of the ball $B_{4\over\alpha}$ 
on the even lattice $\Z^2_e$.
We stress the generality of the following result, which only requires $\varphi$ to be an absolute norm without any additional assumption, in particular we do not assume {\ref{ass-H1}} and {\ref{ass-H2}}.
\begin{proposition}\label{firststep}
Let $\varphi$ be an absolute norm, let $\alpha>0$ be such that $\alpha\not\in\Lambda^\varphi$ and let $\mathcal{F}_\alpha^\varphi$ be as in \eqref{scaled}.
Then the first minimization problem of scheme \eqref{MM-scheme2} has a unique solution
$$E_\alpha^1 = \argmin{E\in\D,\, E\supset q}\mathcal{F}_\alpha^\varphi(E,q)$$
and it satisfies
\begin{equation}\label{firststep-eq}
E^1_\alpha=E(\Ze\cap B_{4\over\alpha}^\varphi)\,.
\end{equation}
In particular, $E^1_\alpha\in\A^e_{\conv}$.
\end{proposition}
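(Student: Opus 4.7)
The plan is to rewrite the energy as a discrete sum, identify the candidate set as the unique minimizer within the even-checkerboard class, and then use a local analysis on the $2\times 2$-square covering to prove that every minimizer must belong to that class. Using the identity $\mathcal{H}^1(\partial E) = 4\#Z(E) - 2N(E)$, where $N(E)$ denotes the number of unordered unit-distance pairs in $Z(E)$, together with $Z(E)\triangle\{0\} = Z(E)\setminus\{0\}$ since $E\supset q$, the energy reads
$$
\mathcal{F}_\alpha^\varphi(E,q) = -4 + \sum_{\iv \in Z(E) \setminus \{0\}} \bigl(\alpha\varphi(\iv) - 4\bigr) + 2 N(E).
$$
Any two centers in $\Ze$ are at distance at least $\sqrt 2$, so $N(E) = 0$ on $\A^e$; on this subclass the energy decouples and is minimized by picking precisely those $\iv \in \Ze\setminus\{0\}$ with $\alpha\varphi(\iv)-4 < 0$. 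The assumption $\alpha \notin \Lambda^\varphi$ excludes the equality case, yielding the unique minimizer $E^* := E(\Ze \cap B_{4/\alpha}^\varphi)$ on $\A^e$, which lies in $\A^e_{\conv}$ since $B_{4/\alpha}^\varphi$ is a convex norm ball.

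To show that every minimizer lies in $\A^e$ I would use the $2\times 2$-tiling $\mathcal{S}_e$ of Definition \ref{coverings}, supplemented by a separate treatment of the axis cross $\mathcal{C}_0 = \bigcup\{q(\iv) : i_1 i_2 = 0\}$. On a single tile $Q(\jv)$ with $j_1, j_2$ odd, the four centers split into an even-parity diagonal $\{\jv,\,\jv+\ev_\jv^3\}\subset\Ze$ and an odd-parity diagonal $\{\jv+\ev_\jv^1,\,\jv+\ev_\jv^2\}\subset\Zo$. Two local estimates drive the argument. First, the submodularity inequality \eqref{normestimate} gives $\varphi(\jv)+\varphi(\jv+\ev_\jv^3)\leq\varphi(\jv+\ev_\jv^1)+\varphi(\jv+\ev_\jv^2)$, so a dissipation-wise "diagonal swap" from odd to even parity is favorable. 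Second, within a single $2\times 2$ tile the even-diagonal configuration is perimeter-maximal among two-square configurations, contributing $8$ units of internal perimeter versus at most $6$ for any adjacent pair. Together these imply that, on each tile, replacing $E\cap Q(\jv)$ by the appropriate even-diagonal occupancy is locally energy-non-increasing.

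The main obstacle is the global assembly: adjacent $Q$-tiles share length-$2$ segments whose contribution to $\partial E$ depends on both tiles' occupations, so the local replacements must be made coherently across the whole covering. The axis strip $\mathcal{C}_0$ requires a separate rearrangement, pushing occupations toward the origin along each axis by the monotonicity \eqref{eq:monotonic} of absolute norms while preserving the parity $i_1+i_2\in 2\Z$. This is exactly the "non-trivial covering argument" advertised in the introduction. Once it is verified that the global rearrangement produces $\widetilde E\in\A^e$ with $\widetilde E\supset q$ and $\mathcal{F}_\alpha^\varphi(\widetilde E,q)\leq\mathcal{F}_\alpha^\varphi(E,q)$, with strict inequality whenever $E\notin\A^e$ (by combining the local gains on any tile containing an odd-parity center with $\alpha\notin\Lambda^\varphi$), the uniqueness within $\A^e$ established above forces $E^1_\alpha = E^*$, as claimed.
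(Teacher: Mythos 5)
Your setup is sound: the identity $\mathcal{H}^1(\partial E)=4\#Z(E)-2N(E)$, the resulting decoupled form of the energy on $\A^e$, and the identification of $E(\Ze\cap B^\varphi_{4/\alpha})$ as the unique minimizer within the even-checkerboard class all match the paper, as does the choice of the $2\times2$ tiling and the submodularity inequality \eqref{normestimate} as the local engine. But the proof is not complete: the step you yourself flag as ``the main obstacle'' --- assembling the local replacements into a single competitor $\widetilde E\in\A^e$ with no energy increase --- is exactly the content of the paper's argument, and you leave it at ``once it is verified that\dots''. The paper does not construct a global rearrangement at all; it observes instead that the perimeter is subadditive over the non-overlapping tiles $\{Q(\jv)\}$ together with the cross $\mathcal{C}_0$, while the dissipation is exactly additive, giving the one-sided localization bound \eqref{bound}, namely $\mathcal{F}_\alpha^\varphi(E,q)\ge\sum_{Q(\jv)}\mathcal{F}_\alpha^\varphi(E\cap Q(\jv),q)+\mathcal{F}_\alpha^\varphi(E\cap\mathcal{C}_0,q)$ for \emph{every} competitor; it then applies superadditivity of the infimum and checks that the even checkerboard attains equality throughout. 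This sidesteps the coherence problem entirely, whereas your plan, as stated, still has to solve it.

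Two further points. First, your uniqueness claim (``strict inequality \dots by combining the local gains on any tile containing an odd-parity center'') fails for norms where \eqref{normestimate} degenerates to an equality, e.g.\ $\varphi=\|\cdot\|_1$: there the odd diagonal solves the local tile problem as well, and strictness must instead come from the analysis on the axis cross $\mathcal{C}_0$ and from the failure of equality in the localization bound for odd or mixed configurations, which is how the paper argues. Second, in the regimes $\alpha>4/\varphi_{\min}$ and $4/\varphi_{\max}<\alpha<4/\varphi_{\min}$ all candidate centers lie on a coordinate axis and the tiling $\mathcal{S}_e$ is empty, so the entire burden falls on the one-dimensional shifting-and-removal argument along the axes; your treatment of $\mathcal{C}_0$ gestures at this but would need to be carried out in full there.
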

\begin{proof}
The argument does not require the normalization assumption \ref{ass-H2}; we then set 
\begin{equation*}\label{constant-norm}
\varphi_{\max} := \max\{\varphi(1,0), \varphi(0,1)\}, \quad \varphi_{\min} := \min\{\varphi(1,0), \varphi(0,1)\},
\end{equation*}
and we assume, without loss of generality, that $\varphi_{\max}=\varphi(1,0)$.
Note that $\frac{4}{\varphi_{\rm min}}, \frac{4}{\varphi_{\rm max}}\in\Lambda^\varphi$.

If $\alpha>\frac{4}{\varphi_{\min}}$ we get $E^1_\alpha=q$ since $\mathcal{F}_\alpha^\varphi(q(\iv),q)>0$ for every $\iv\in\Z^2\setminus\{(0,0)\}$ and \eqref{firststep-eq} trivially holds.
If $\frac{4}{\varphi_{\max}}<\alpha<\frac{4}{\varphi_{\min}}$, we get that for any $\iv=(i_1,i_2)$ with $i_1\not=0$ there holds $\mathcal{F}_\alpha^\varphi(q(\iv),q)>0$, thus $Z(E_\alpha^1)\subset\{0\}\times\Z$.
\begin{figure}[h]
\centering
\includegraphics{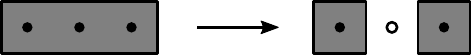}
\qquad\qquad
\includegraphics{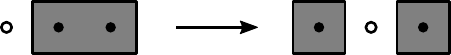}
\caption{Clusters of two or three lattice points are ``locally'' not energetically convenient.}
\label{fig:remov}
\end{figure}\\
Let $E\in\D$ be a competitor such that $Z(E)\subset\{0\}\times\Z$.
If $\iv\in Z(E)\setminus\{(0,0)\}$ has two nearest-neighbors, removing $q(\iv)$ leaves the total perimeter unchanged but decreases the dissipation (see Figure \ref{fig:remov}).
If instead $\iv$ has only one nearest-neighbor $\iv'\neq(0,0)$, if $|i_2|<|i_2'|$ then shifting $q(\iv)$ towards the origin does not decrease the perimeter but reduces the dissipation; if instead $|i_2|>|i_2'|$ the same holds shifting $q(\iv')$ (see Figure \ref{fig:remov}).
Hence, we may restrict our analysis to the two configurations $E\big(\Ze\cap B_\frac{4}{\alpha}^\varphi\big)$ and $E\big(\Zo\cap B_\frac{4}{\alpha}^\varphi\big)\cup q$.
A comparison between the two energy contributions yields that the variation from the odd checkerboard to the even one is less then $0$, thus $E^1_\alpha=E\big(\Ze\cap B_\frac{4}{\alpha}^\varphi\big)$.

Now let $\alpha<\frac{4}{\varphi_{\max}}$.
We consider the covering described in Definition~\ref{coverings}.
First, we note that the energy of every admissible set $E$ complies with the estimate
\begin{equation}\label{bound}
\mathcal{F}_\alpha^\varphi(E,q)\ge\sum_{Q(\jv)\in\mathcal{S}_e(\R^2)}\mathcal{F}_\alpha^\varphi(E\cap Q(\jv),q)+\mathcal{F}_\alpha^\varphi(E\cap \mathcal{C}_0,q)\,,
\end{equation}
the equality holding if and only if $\{E\cap Q(\jv)\}$ and $E\cap\mathcal{C}_0$ are non-overlapping; this is the case of sets $E$ having a checkerboard structure.
Inequality \eqref{bound} corresponds to localizing the energy, neglecting interactions between neighboring squares.

From Lemma \ref{lemball}, we can reduce our analysis to admissible sets contained in $E_{\alpha,\varphi}:=\Z^2\cap B_{4\over\alpha}^\varphi$ and inequality \eqref{bound} holds restricting the sum to every $Q(\jv)\in\mathcal{S}_e(E_{\alpha,\varphi})$ since $\mathcal{F}_\alpha^\varphi(q(\jv),q)>0$ for every $\varphi(\jv)>\frac{4}{\alpha}$.
We will prove that
\begin{equation}
\begin{aligned}
\min_{E\in\mathcal{D},\, E\supset q}\mathcal{F}_\alpha^\varphi(E\cap Q(\jv),q) &=\mathcal{F}_\alpha^\varphi(E(\Ze\cap B_\frac{4}{\alpha}^\varphi))\cap Q(\jv),q)\\
\min_{E\in\mathcal{D},\, E\supset q}\mathcal{F}_\alpha^\varphi(E\cap\mathcal{C}_0,q) &=\mathcal{F}_\alpha^\varphi(E(\Ze\cap B_\frac{4}{\alpha}^\varphi)\cap\mathcal{C}_0,q)
\end{aligned}
\label{minresult}
\end{equation}
for every $Q(\jv)\in\mathcal{S}_e(E_{\alpha,\varphi})$; that is, the optimal structure is an even checkerboard set in each of the following cases: (a) inside $Q(\jv)\in\mathcal{S}_e^c(E_{\alpha,\varphi})$; (b) inside $Q(\jv)\in\mathcal{S}_e^b(E_{\alpha,\varphi})$; (c) on $E_{\alpha,\varphi}\cap\mathcal{C}_0$.
In the sequel, $E$ will denote a general competitor $E\in\D$, $E\subset E_{\alpha\varphi}$.

\begin{figure}[h]
\centering
\def\svgwidth{150pt}
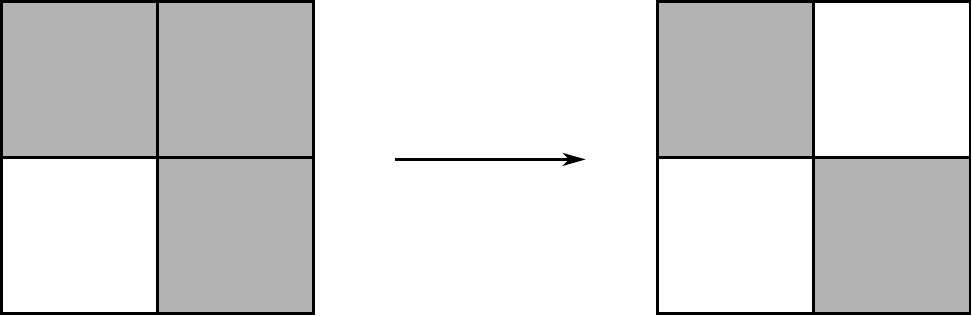
\caption{It is convenient to remove $q(\iv)$ if $\iv$ has two nearest-neighbors.}
\label{fig:step1}
\end{figure}

(a) Consider $Q(\jv)\in\mathcal{S}_e^c(E_{\alpha,\varphi})$ and let $q(\iv)\subset Q(\jv)\cap E$.
Note that the class $\mathcal{S}_e^c(E)$ is not empty if and only if $\alpha<\frac{\varphi_{\max}}{2}$.
Moreover, since adding an isolated square in $Q(\jv)$ is always energetically convenient, we can restrict to configurations of $Q(\jv)\cap E$ consisting of exactly two squares $q(\iv')$ and $q(\iv'')$ (see Fig.~\ref{fig:step1}).
Now, if $q(\iv')\cup q(\iv'')$ has no checkerboard structure; that is, $\iv'$ and $\iv''$ are nearest-neighbors, both the checkerboard configurations $E'$ and $E''$, containing $q(\iv')$ and $q(\iv'')$ respectively, decrease the energy.
Indeed, the corresponding variation of the energy is given by
\begin{equation*}
\mathcal{F}^\varphi_\alpha(E',q)-\mathcal{F}^\varphi_\alpha(q(\iv')\cup q(\iv''),q)\le-2+\alpha\varphi_{\max},
\quad
\mathcal{F}^\varphi_\alpha(E'',q)-\mathcal{F}^\varphi_\alpha(q(\iv')\cup q(\iv''),q)\le-2+\alpha\varphi_{\max}.
\end{equation*}
\begin{figure}[h]
\centering
\def\svgwidth{200pt}
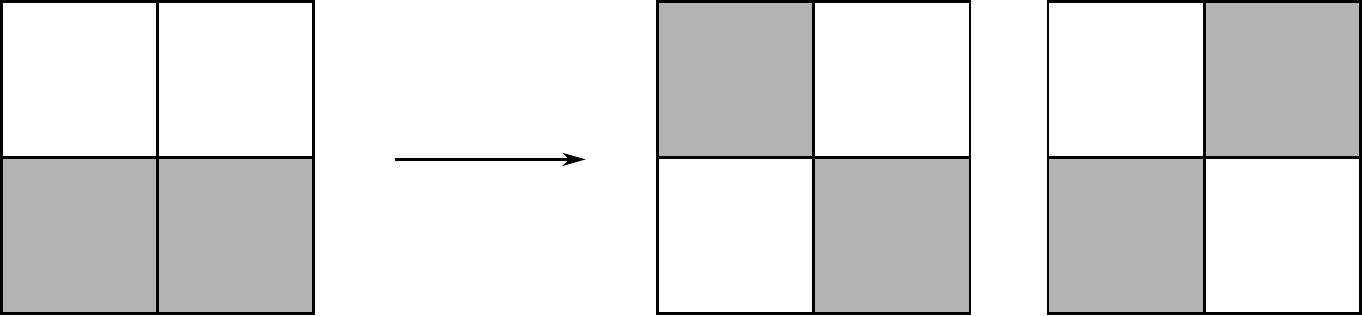
\caption{Any checkerboard configuration inside $Q(\jv)$ is a competitor with less energy.}
\label{fig:step2}
\end{figure}
This variation is never positive, since when $\alpha>\frac{2}{\varphi_{\max}}$ the class $\mathcal{S}_e(E_{\alpha,\varphi})$ is empty. 
Thus, any checkerboard configuration inside $Q(\jv)$ is a competitor with less energy then $E$ (see Fig.~\ref{fig:step2}).
Now we should compare the energies of the two possible checkerboard configurations inside $Q(\jv)$.
For this, we note that the variation of the energy in order to pass from the odd checkerboard configuration $q(\jv+\ev_\jv^1)\cup q(\jv+\ev_\jv^2)$ to the even one $q(\jv)\cup q(\jv+\ev_\jv^3)$ is
\begin{multline*}
\mathcal{F}_\alpha^\varphi(q(\jv)\cup q(\jv+\ev_\jv^3),q)-\mathcal{F}_\alpha^\varphi(q(\jv+\ev_\jv^1)\cup q(\jv+\ev_\jv^2),q) \\
=\alpha\big(\varphi(\jv)+\varphi(\jv+\ev_\jv^3)-\varphi(\jv+\ev_\jv^1)-\varphi(\jv+\ev_\jv^2)\big),
\end{multline*}
which is non-positive by \eqref{normestimate}.

\begin{figure}[h]
\centering
\def\svgwidth{250pt}
\begingroup%
  \makeatletter%
  \providecommand\color[2][]{%
    \errmessage{(Inkscape) Color is used for the text in Inkscape, but the package 'color.sty' is not loaded}%
    \renewcommand\color[2][]{}%
  }%
  \providecommand\transparent[1]{%
    \errmessage{(Inkscape) Transparency is used (non-zero) for the text in Inkscape, but the package 'transparent.sty' is not loaded}%
    \renewcommand\transparent[1]{}%
  }%
  \providecommand\rotatebox[2]{#2}%
  \newcommand*\fsize{\dimexpr\f@size pt\relax}%
  \newcommand*\lineheight[1]{\fontsize{\fsize}{#1\fsize}\selectfont}%
  \ifx\svgwidth\undefined%
    \setlength{\unitlength}{514.45298082bp}%
    \ifx\svgscale\undefined%
      \relax%
    \else%
      \setlength{\unitlength}{\unitlength * \real{\svgscale}}%
    \fi%
  \else%
    \setlength{\unitlength}{\svgwidth}%
  \fi%
  \global\let\svgwidth\undefined%
  \global\let\svgscale\undefined%
  \makeatother%
  \begin{picture}(1,0.17640334)%
    \lineheight{1}%
    \setlength\tabcolsep{0pt}%
    \put(0,0){\includegraphics[width=\unitlength,page=1]{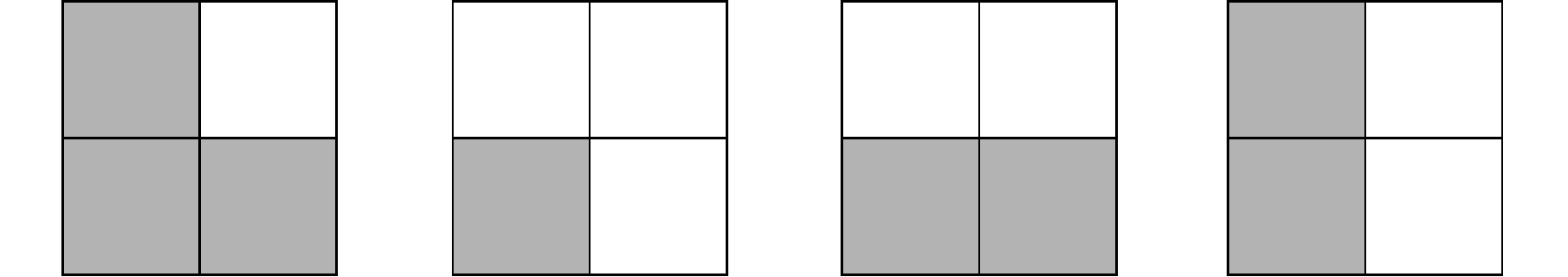}}%
    \put(0.23009327,0.01457859){\color[rgb]{0,0,0}\makebox(0,0)[lt]{\lineheight{1.25}\smash{\begin{tabular}[t]{l}$A$\end{tabular}}}}%
    \put(0.47402641,0.01457859){\color[rgb]{0,0,0}\makebox(0,0)[lt]{\lineheight{1.25}\smash{\begin{tabular}[t]{l}$B$\end{tabular}}}}%
    \put(0.72638626,0.01457859){\color[rgb]{0,0,0}\makebox(0,0)[lt]{\lineheight{1.25}\smash{\begin{tabular}[t]{l}$C$\end{tabular}}}}%
    \put(0.97158202,0.01457859){\color[rgb]{0,0,0}\makebox(0,0)[lt]{\lineheight{1.25}\smash{\begin{tabular}[t]{l}$D$\end{tabular}}}}%
  \end{picture}%
\endgroup%

\caption{The possible cases of $Q(\jv)\cap E_{\alpha,\varphi}$.}
\label{bsb}
\end{figure}
(b) Now, let $Q(\jv)\in\mathcal{S}_e^b(E_{\alpha,\varphi})$. Without loss of generality, we may assume that $j_1,j_2>0$, the situation being completely symmetric in the other cases.
Inside such a $2\times2$ square, we have four possible cases for $Q(\jv)\cap E_{\alpha,\varphi}$, as pictured in Fig.~\ref{bsb}.
We claim that the configuration with minimal energy inside $Q(\jv)$ is a checkerboard set.
Consider first $\alpha>\frac{2}{\varphi_{\max}}$, then $\iv\in B_\frac{4}{\alpha}^\varphi$ if and only if $|i_1|\le1$, thus the only possible cases for $Q(\jv)\cap E_{\alpha,\varphi}$ are those labeled by $B$ and $D$ in Fig.~\ref{bsb}.
Since
$$
\mathcal{F}_\alpha^\varphi(q(\jv),q)<\mathcal{F}_\alpha^\varphi(\ev_\jv^2,q),
\quad
\mathcal{F}_\alpha^\varphi(q(\jv),q)-\mathcal{F}_\alpha^\varphi(q(\jv)\cup q(\jv+\ev_\jv^2),q) = 2-\alpha\varphi_{\max} < 0
$$
in both cases the optimal configuration is $q(\jv)$.
Consider now $\alpha<\frac{2}{\varphi_{\max}}$.
Reasoning as before we can assume $Q(\jv)\cap E=q(\iv')\cup q(\iv'')$.
In cases $B$, $C$ and $D$, if $\iv'$ and $\iv''$ were nearest neighbors, with, \emph{e.g.}, $\varphi(\iv')>\varphi(\iv'')$, then removing $q(\iv')$ would produce a negative variation; that is,
$$\mathcal{F}^\varphi_\alpha(q(\iv''),q)-\mathcal{F}^\varphi_\alpha(q(\iv')\cup q(\iv''),q)\le 2-\alpha\varphi(\iv')<2-\alpha\Big(\frac{4}{\alpha}-\varphi_{\max}\Big)\le-2+\alpha\varphi_{\max}.$$
Thus the minimal configuration is the even checkerboard.
For what concerns the case $A$, since $\varphi(\jv+\ev_\jv^3)>\frac{4}{\alpha}$ and by (\ref{normestimate}) we have that
$$\mathcal{F}_\alpha^\varphi(q(\jv),q)<\mathcal{F}_\alpha^\varphi(q(\jv)\cup q(\jv+\ev_\jv^3),q)\le\mathcal{F}_\alpha^\varphi(q(\jv+\ev_\jv^1)\cup q(\jv+\ev_\jv^2),q)$$
which again leads to the result.

\begin{figure}[h]
\begin{center}
\includegraphics{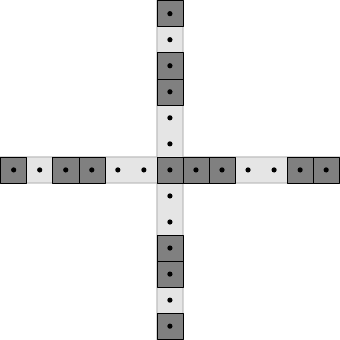}\qquad
\includegraphics{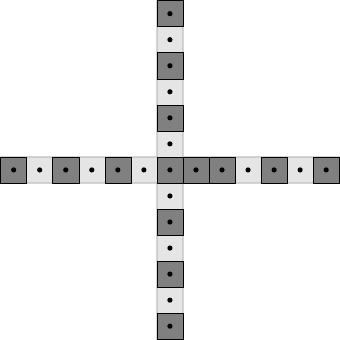}\qquad
\includegraphics{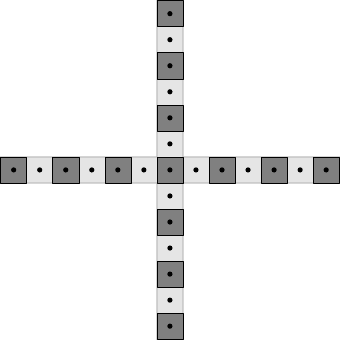}
\end{center}
\caption{Optimal configuration for $E_{\alpha,\varphi}\cap\mathcal{C}_0$.}
\label{fig:step3}
\end{figure}
(c) Finally, we consider $E_{\alpha,\varphi}\cap\mathcal{C}_0$.
Reasoning as in the case $\frac{4}{\varphi_{\max}}<\alpha<\frac{4}{\varphi_{\min}}$, we can restrict our analysis to competitors having a checkerboard structure union $q$ on the coordinate axes.
A comparison between the two energy contributions on each axis yields that the variation from the odd checkerboard to the even one is less then $0$, and equals $0$ if and only if $\alpha\in\Lambda$.
Thus, the minimal configuration is the even checkerboard (see Figure \ref{fig:step3}).
With \eqref{bound} and the finite superadditivity of the infimum, this implies that
\begin{equation}\label{subaddmin}
\begin{aligned}
\min_{E\in\mathcal{D},\, E\supset q}\mathcal{F}_\alpha^\varphi(E,q) &\ge \min_{E\in\mathcal{D},\, E\supset q}\Big(\sum_{Q(\jv)\in\mathcal{S}_e(E_{\alpha,\varphi})}\mathcal{F}_\alpha^\varphi(E\cap Q(\jv),q)+\mathcal{F}_\alpha^\varphi(E\cap\mathcal{C}_0,q)\Big)\\
&\ge \sum_{Q(\jv)\in\mathcal{S}_e(E_{\alpha,\varphi})}\min_{E\in\mathcal{D},\, E\supset q}\mathcal{F}_\alpha^\varphi(E\cap Q(\jv),q)+\min_{E\in\mathcal{D},\, E\supset q}\mathcal{F}_\alpha^\varphi(E\cap\mathcal{C}_0,q) \\
&= \sum_{Q(\jv)\in\mathcal{S}_e(E_{\alpha,\varphi})}\mathcal{F}_\alpha^\varphi(E(\Ze\cap B_\frac{4}{\alpha}^\varphi)\cap Q(\jv),q)+\mathcal{F}_\alpha^\varphi(E(\Ze\cap B_\frac{4}{\alpha}^\varphi)\cap\mathcal{C}_0,q) \\
&=\mathcal{F}_\alpha^\varphi(E(\Ze\cap B_\frac{4}{\alpha}^\varphi),q)\,,
\end{aligned}
\end{equation}
whence the equality follows, thus concluding the proof. Uniqueness comes from step (c).
%
\end{proof}

Note that the local minimum problems studied in points (a) and (b) in the proof above might be satisfied also by the odd checkerboard if \eqref{normestimate} reduces to an equality (\emph{e.g.} when $\varphi=\|\cdot\|_1$).
Nevertheless, for odd checkerboards the equality in \eqref{subaddmin} no longer holds and this implies that $E_{\alpha,\varphi}$ is the unique minimum.

\begin{definition}\label{nucleus-def}
For every $\alpha>0$, $\alpha\not\in\Lambda^\varphi$, we define the \emph{nucleus} of the motion given by the scheme \eqref{MM-scheme2} as the lattice set $$\mN_\alpha^\varphi:=Z(E_\alpha^1)$$ where 
 $E_\alpha^1 = \argmin{E\in\D,\, E\supset q}\mathcal{F}_\alpha^\varphi(E,q)$,
which is well defined by Proposition \ref{firststep}.
\end{definition}

We stress that the assumption on $\varphi$ to be an absolute norm is crucial in order to obtain the previous structure result of Proposition~\ref{firststep}. Indeed, if not fulfilled, the set $E_\alpha^1$ may not be a checkerboard as shown by the following simple example.
\begin{figure}[h]
\begin{center}
\includegraphics{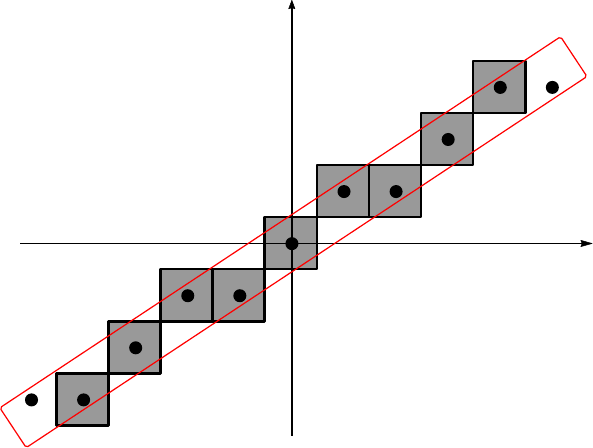}
\end{center}
\caption{The black dots represent the lattice set $\Z^2\cap B_{4\over\alpha}^\varphi$, while the set $E_\alpha^1$ is pictured in gray.}
\label{nocheck:fig}
\end{figure}

\begin{example}[non-checkerboard minimizers]\label{ex:noncheckmin}
We consider the norm
$$
\varphi(\x)=\max\Big\{\frac{|3x_1+2x_2|}{10},|3x_2-2x_1|\Big\}\,,
$$
and we assume that $\alpha\in (\frac{20}{13},\frac{40}{21})$.
In this case, for every such $\alpha$, the set $ B_{4\over\alpha}^\varphi$ is a rectangle and
\begin{equation*}
\I^{\varphi,\alpha}:= B_{4\over\alpha}^\varphi\cap\Z^2= \{(0,0),\pm(1,1),\pm(2,1),\pm(3,2),\pm(4,3),\pm(5,3)\}
\end{equation*}
(see Fig.~\ref{nocheck:fig}).
We show that the first step of \eqref{MM-scheme2} $E_\alpha^1$ is not a checkerboard set. 
First note that the points $(0,0)$ and $\pm(3,2)$ are isolated in $\I^{\varphi,\alpha}$ so their contribution is $-4+\alpha\varphi({\bf i})$ which is always negative, thus $Z(E_\alpha^1)$ contains these points.
Hence, we are reduced to study the minimal configurations of the pairs of nearest-neighbours $\{(1,1),(2,1)\}$ and $\{(4,3),(5,3)\}$:
$$
\mathcal{F}_\alpha^\varphi\big(q(1,1)\cup q(2,1),q\big)=-6+2\alpha<-4+\alpha=\mathcal{F}_\alpha^\varphi\big(q(1,1),q\big)=\mathcal{F}_\alpha^\varphi\big(q(2,1),q\big)
$$
and
\begin{align*}
\mathcal{F}_\alpha^\varphi\big(q(4,3),q\big) =-4+\frac{9}{5}\alpha &<-6+\alpha\Big(\frac{9}{5}+\frac{21}{10}\Big)=\mathcal{F}_\alpha^\varphi\big(q(4,3)\cup q(5,3),q\big) \\
&<-4+\alpha\frac{21}{10}=\mathcal{F}_\alpha^\varphi\big(q(5,3),q\big)\,.
\end{align*}
The same holds for $\{(-1,-1),(-2,-1)\}$ and $\{(-4,-3),(-5,-3)\}$, and this gives that
$$
E_\alpha^1=E(\I^{\varphi,\alpha}\backslash\{\pm(5,3)\})
$$
which is not a checkerboard (see Fig.~\ref{nocheck:fig}).
\end{example}


We conclude noting that, if we renounce to the monotonicity constraint $E\supset q$, the minimization problem above may admit, for suitable values of $\alpha$, also a checkerboard solution $E_\alpha^1$ of odd parity. In order not to distract the reader's attention from the monotone case, we prefer to postpone this generalization of Proposition~\ref{firststep} to Subsection~\ref{sec:conjectures} (see Proposition~\ref{firststep-gen}).

\subsection{The structure result for non-trivial initial datum}\label{sec:step2}

Proposition~\ref{firststep} shows that the first step $E_\alpha^1$ of discrete scheme \eqref{MM-scheme2} is a checkerboard set and that $Z(E^1_\alpha)$ is a $\Ze$-convex set (see Definition \ref{lattice-convex-def}).
Our aim now is to prove that an analogous structure result can be obtained for minimizers of the energy $\mathcal{F}_\alpha^\varphi(\cdot,E)$, where $\varphi$ is a symmetric absolute normalized norm (see Section~\ref{sec:norm}), also for a general $E\in\A_{\rm conv}$ fulfilling suitable assumptions (see \eqref{monotone-edges} below), and then to iteratively apply it to $E=E_\alpha^{k-1}$ for $k\geq1$. The proof of this stability result will rely on a localization argument only reminiscent of that used in the proof of Proposition~\ref{firststep}. Indeed, we have to face a technical issue: since the dissipation term $D^\varphi(\cdot,E)$ does not satisfy a submodularity inequality analogous to \eqref{normestimate}, the $2\times2$-square covering no longer works.
We will then define suitable coverings ``outside'' every discrete edge (see Definition \ref{disc-edge-def}) of $E$ which mimick the $2\times2$-square covering, and then match them altogether.
For this, we need the following ``convexity'' conditions:

\smallskip
(i) on the norm, we assume that
\begin{enumerate}[font={\normalfont},label={(H3)}]
\item $\varphi(h,h+1)-\varphi(h,h)\ge\frac{1}{2}, \quad \text{for every } h\in\N$;\label{norm-derivative}
\end{enumerate}

(ii) on the structure of $\partial^{\rm eff}E$, we require that
\begin{equation}\label{monotone-edges}
\theta(\bm\nu(\ell'),\bm\nu(\ell))<0 \quad \text{for every } \ell,\ell'\in\mathcal{E}(E) \text{ such that } \ell \text{ precedes (clockwise) } \ell'\,,
\end{equation}
where $\theta$ is introduced in Definition \ref{angle}.

The $\ell^p$-norms, $1\leq p\leq\infty$, are a class of norms complying with {\ref{ass-H1}}--{\ref{norm-derivative}}.
We also note that assumption {\ref{norm-derivative}} will play a role only in Step 5 of the proof of Proposition \ref{steps}.

In order to avoid some (interesting) pathological phenomena (as a one-dimensional motion, see Example \ref{1D-ex}), we assume non-degeneracy conditions on the sets $E$ and on the minimizer of $\mathcal{F}_\alpha^\varphi(\cdot,E)$; namely, {\ref{ass-H2}} and \eqref{non-degeneracy}.
Finally, to simplify the exposition, we assume that
\begin{equation}
\mbox{$E$ is symmetric with respect to the axes and the lines $x_2=\pm x_1$.}
\label{E-symm}
\end{equation}

We now state the main result of this section.

\begin{proposition}\label{steps}
Let $\varphi$ be a symmetric absolute normalized norm complying with {\rm\ref{norm-derivative}} 
and let $\alpha>0$ be such that $\alpha\not\in\Lambda^\varphi$.
Let $E\in\A^e_{\rm conv}$ be a set 
satisfying \eqref{non-degeneracy}, \eqref{monotone-edges} and \eqref{E-symm}.
Then there exists a unique solution of the minimization problem
\begin{equation}\label{min-prob-steps}
E_\alpha=\argmin{E'\supset E\, E'\in\D}\mathcal{F}_\alpha^\varphi(E',E)
\end{equation}
and it satisfies
\begin{equation}\label{even-steps}
Z(E_\alpha)=\Big\{\iv\in\Ze \,:\, d^\varphi(\iv,E)<\frac{4}{\alpha}\Big\}.
\end{equation}
In particular, $E_\alpha\in\A^e_{\rm conv}$.
\end{proposition}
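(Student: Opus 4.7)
The strategy parallels that of Proposition \ref{firststep}. By Lemma \ref{lemball}, any minimizer of $\mathcal{F}_\alpha^\varphi(\cdot, E)$ in the constrained class has its centers in $Z(E) \cup \{\iv \in \Z^2 : d^\varphi(\iv, E) \le 4/\alpha\}$, so the problem reduces to deciding which lattice points in the ``growth region'' just outside $E$ to add. The plan is to split the energy over a suitable covering and to show that on each piece the even-checkerboard configuration consistent with $Z(E) \subseteq \Ze$ is uniquely minimizing.

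Unlike the nucleation step, where the dissipation involved $\varphi(\cdot)$ directly and the uniform $2\times 2$-tiling of Definition \ref{coverings} sufficed, here the dissipation involves $d^\varphi(\cdot, E)$, which is only piecewise equivalent to a translate of $\varphi$. I would therefore decompose $\R^2 \setminus E$ into cells $R_\ell$ indexed by the discrete edges $\ell \in \mathcal{E}(E)$: $R_\ell$ collects the points $\x$ whose $\varphi$-projection $\pi_E^\varphi(\x)$ meets $\ell$. The monotonicity \eqref{monotone-edges} of the edge normals, combined with the symmetries \eqref{E-symm} and of $\varphi$, guarantees that each $R_\ell$ is an unbounded strip extending outward from $\ell$ (the ``infinite regions'' alluded to in the introduction), and that $\{R_\ell\}_{\ell \in \mathcal{E}(E)}$ tiles $\R^2 \setminus E$ up to a measure-zero seam. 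Remark \ref{flat-slant-edge} further reduces the analysis to four canonical edge shapes.

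Inside each $R_\ell$ I would introduce a $2\times 2$-square covering analogous to $\mathcal{S}_e$ but aligned so that, for every $2\times 2$ cell $Q(\jv) \subseteq R_\ell$, the four integer points of $Q(\jv)$ share a coherent projection pattern on $\ell$. The key technical step is then the submodularity inequality for the distance function,
\begin{equation*}
d^\varphi(\iv, E) + d^\varphi(\iv + \ev_\jv^3, E) \le d^\varphi(\iv + \ev_\jv^1, E) + d^\varphi(\iv + \ev_\jv^2, E),
\end{equation*}
which I expect to derive from the absolute-norm submodularity \eqref{normestimate} applied to the common projection displacements. With this in hand, the three-case analysis (a)--(c) of Proposition \ref{firststep} transports inside each $R_\ell$: non-checkerboard clusters inside any $Q(\jv)$ are strictly improved by a local even-parity rearrangement, and configurations on the degenerate strata of the covering are handled by a direct parity comparison using \ref{ass-H2}.

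The main obstacle, already signalled by the authors in the introduction, is matching the local coverings at the seams between adjacent cells $R_{\ell^-}$ and $R_{\ell^+}$ near each discrete vertex of $\partial^{\rm eff} E$: some $2\times 2$ cells inevitably straddle a seam and the clean decomposition of the energy fails there. This is precisely where assumption \ref{norm-derivative} enters: the quantitative gap $\varphi(h, h+1) - \varphi(h, h) \ge \tfrac{1}{2}$ provides a cushion that dominates the cross-seam interaction, so the superadditivity chain \eqref{subaddmin} can be reproduced with interface corrections that cancel against the $-4$ perimeter term of each localized inequality. Since $\alpha \notin \Lambda^\varphi$, the resulting inequality is strict for every competitor other than the announced checkerboard, yielding both existence and uniqueness of $E_\alpha$ as in \eqref{even-steps}. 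The $\Ze$-convexity of $Z(E_\alpha)$ then follows from \eqref{even-steps} combined with the $\Ze$-convexity of $Z(E)$ and the convexity of $\varphi$-neighborhoods.
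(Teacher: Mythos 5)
Your overall strategy is the paper's: reduce via Lemma \ref{lemball}, partition the exterior of $E$ into regions $A(\ell)$ indexed by the discrete edges, observe that inside each region the four centers of an aligned $2\times2$ cell share a common $\varphi$-projection point on $\ell$ (so that \eqref{normestimate} transfers to $d^\varphi(\cdot,E)$), and then run the localization chain of \eqref{subaddmin}. The reliance on \eqref{monotone-edges} to make the regions $A(\ell)$ unbounded and the covering well-posed is also the right instinct (this is Remark \ref{remk:monotone-edges} and property \eqref{tech-ass} in the paper, which itself requires a nontrivial induction on consecutive edges that you leave implicit).

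The genuine gap is in your treatment of the seams. You acknowledge that some $2\times2$ cells straddle the boundary between $A(\ell^-)$ and $A(\ell^+)$ and propose to absorb the resulting ``cross-seam interaction'' using the quantitative gap in \ref{norm-derivative}, so that the superadditivity chain survives ``with interface corrections.'' This is not a workable mechanism as stated: once a $2\times2$ cell loses the common-projection property, the four-point submodularity inequality simply has no analogue, and \ref{norm-derivative} provides no substitute for it. The paper resolves this differently: it never allows a $2\times2$ tile to straddle a seam. Instead it enlarges the tiling alphabet with the six-square rectangles $R^{\rm hor}, R^{\rm ver}, R^{\pm}$ of Definition \ref{def:newtiles}, placed exactly at the transition columns (the indices $a_h$, $b_h$ of \eqref{extrema}), and proves for these a dedicated three-against-three inequality \eqref{rect} (Remark \ref{rectremk}), derived from \eqref{normestimate} plus the monotonicity \eqref{eq:monotonic} of the absolute norm --- still with no use of \ref{norm-derivative}. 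The covering is then exactly non-overlapping (Step 6), so the chain \eqref{subaddmin} needs no correction terms at all. Assumption \ref{norm-derivative} enters only once, and not where you place it: it is needed for the parity comparison \eqref{energy-top} in the long horizontal strips \eqref{axis-first-edge2} covering the region that projects onto the topmost edge $\ell_0$ near the symmetry axis, where the competitor is an entire odd row rather than a local rearrangement. Without the rectangular tiles and \eqref{rect}, your argument cannot close the localization step at the edge-to-edge transitions, which is precisely the delicate point of this proposition.
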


Before entering in the details of the proof, we premise some remarks.

\begin{figure}[htp]
\centering
\includegraphics{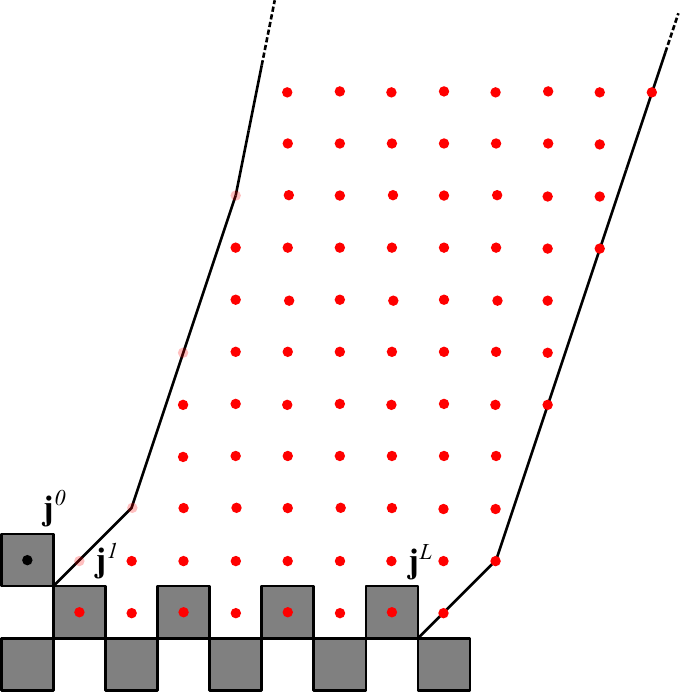}
\quad
\includegraphics{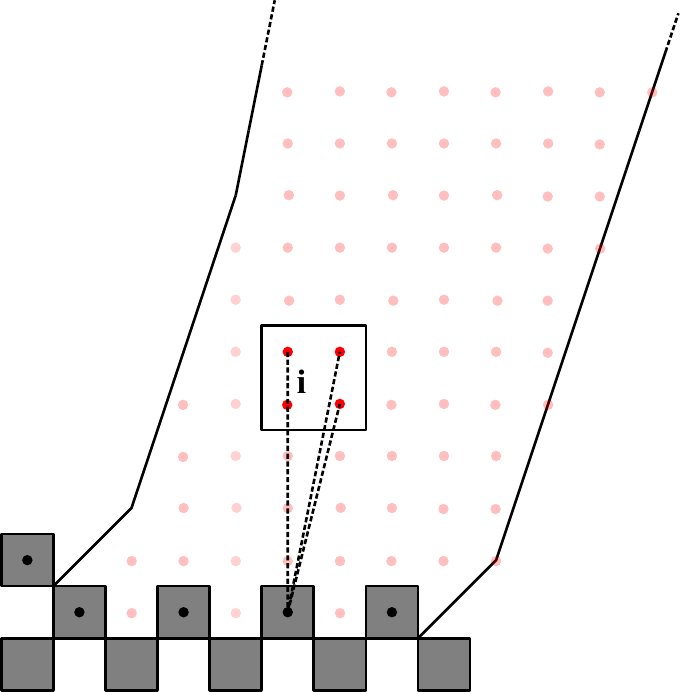}
\caption{In red an example of $A(\ell)$ for $\ell$ as in (ii) of Remark \ref{flat-slant-edge}.
On the left, lighter dots are outside $A(\ell)$.
On the right, the projection of the centers of a $2\times2$-square on a common point of $\ell$.}
\label{fig:area-proj}
\end{figure}

\begin{remark}[Projection of a $2\times2$ square]\label{BS-proj}
Let $E$ be given as in the statement of Proposition \ref{steps}.
We partition the lattice points of the region of the plane ``outside'' $E$ into sets $A(\ell)$ according to the discrete edge $\ell\in\mathcal{E}(E)$ they project onto.
We follow the classification of discrete edges given in Remark \ref{flat-slant-edge}, and we start with case (ii); that is, $\ell\subset\{\x\in\R^2 \,:\, x_2>0\}$ and $s(\ell)\in(0,\frac{1}{3}]$.
For such edges we define the set
\begin{equation}\label{ext-edge-flat}
A(\ell) := \big\{\iv\in\Z^2\,:\, i_1\ge j_1^1,\, i_2\ge j_2^1,\,\pi_E^\varphi(\iv)\subset\{\jv^l\}_{l=1}^L \text{ or } \pi_E^\varphi(\iv)\ni\jv^L\big\}\,,
\end{equation}
consisting of all the lattice points that project on $\ell\setminus\{\jv^0\}$ (Fig. \ref{fig:area-proj}).
The choice of excluding the points projecting also on $\jv^0$, although arbitrary, will simplify the definition of the covering in the proof of Proposition~\ref{steps}; moreover, thanks to this choice, if $\ell$ and $\ell'$ are two consecutive edges then $A(\ell)$ and $A(\ell')$ are disjoint.

We can assume, up to translations and for the sake of simplicity, that $\ell:=\{\jv^l\}_{l=0}^L=\{(1,1)\}\cup\{(2l,0)\}_{l=1}^L$.
From the fact that $\varphi$ is monotonic, for every $\iv\in A(\ell)$ it holds that
$$
\pi_E^\varphi(\iv)\ni
\begin{cases}
\jv^l & 2l-1\le i_1\le 2l+1 \text{ with } 0<l<L\\
\jv^L & i_1\ge 2L-1.
\end{cases}
$$
This yields that for every $\iv\in A(\ell)$ such that $Z\big(Q(\iv)\big)\subset A(\ell)$ there holds
\begin{equation}\label{eq:intersection-flat}
\Big(\bigcap_{\jv\in Z(Q(\iv))}\pi_E^\varphi(\jv)\Big)\cap\big(\ell\setminus\{\jv^0\}\big)\neq\emptyset\,.
\end{equation}
This means that the four lattice points inside $Q(\iv)$ project onto a common point of $\ell$, see Fig.~\ref{fig:area-proj}.
An analogous result holds in case (i) of Remark \ref{flat-slant-edge}, when $s(\ell)=0$.

\begin{figure}[htp]
\centering
\includegraphics{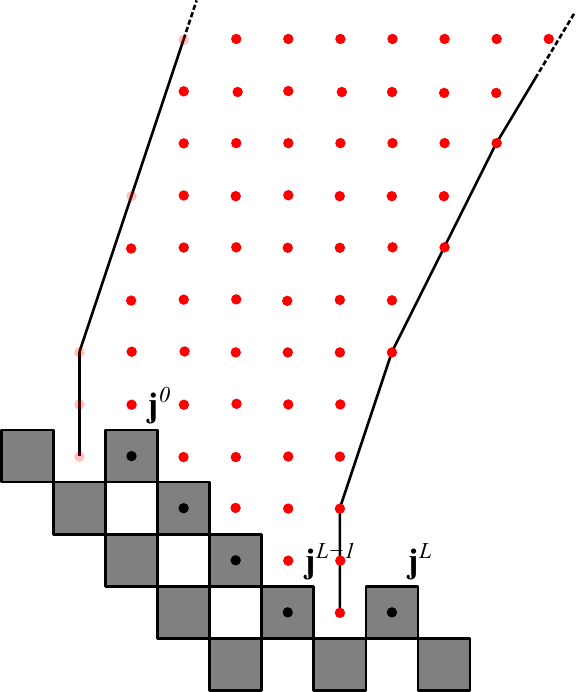}
\quad
\includegraphics{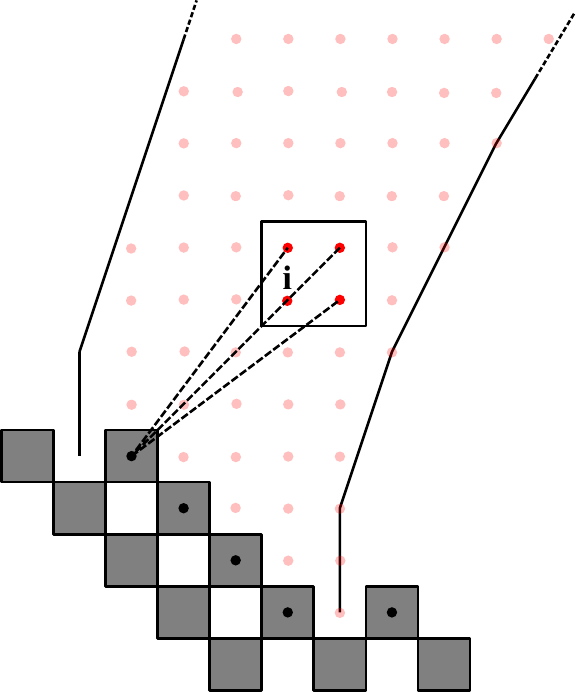}
\caption{In red an example of $A(\ell)$ for $\ell$ as in (iii) of Remark \ref{flat-slant-edge}.
On the left, lighter dots are outside $A(\ell)$.
On the right, the projection of the centers of a $2\times2$-square on a common point of $\ell$.}
\label{fig:area-proj2}
\end{figure}
Now consider $\ell\in\mathcal{E}(E)$ complying with case (iii) of Remark \ref{flat-slant-edge}; that is, $\ell\subset\{\x\in\R^2 \,:\, x_2>0\}$ and $s(\ell)\in(\frac{1}{3},1)$.
In this case the sets of lattice points that project on $\ell\setminus\{\jv^L\}$ is defined as
\begin{equation}\label{ext-edge-slant}
A(\ell) := \big\{\iv\in\Z^2\,:\, \|\iv-(j_1^0,j_2^{L-1})\|_1\ge|j_2^0-j_2^{L-1}|,\, i_2\ge j_2^{L-1},\, \pi_E^\varphi(\iv)\subset\{\jv^l\}_{l=0}^{L-1} \text{ or } \pi_E^\varphi(\iv)\ni\jv^{L-1}\big\}\,,
\end{equation}
see Fig.~\ref{fig:area-proj2}.
For simplicity we can assume, up to translations, that $\ell=\{\jv^l\}_{l=0}^L=\{(l,-l)\}_{l=0}^{L-1}\cup\{(L+1,-L+1)\}$.
From the symmetry assumption {\ref{ass-H1}} there holds
$$
\pi_E^\varphi(\iv)\ni
\begin{cases}
\jv^0 & i_1-i_2\le1 \\
\jv^l & 2l-1\le i_1-i_2\le 2l+1 \text{ with } 0<l<L.
\end{cases}
$$

This can be seen by characterizing the projection of points $\iv\in A(\ell)$ of coordinates $\iv=(h,h)$ and $(h+1,h)$ with $h\in\N$, since the other cases reduce to this situation from the translation invariance of the distance.
Thus, assume by contradiction that there exist $h$ and $0<l<L$ such that $\varphi(\iv-\jv^l)=\varphi(h-l,h+l)<\varphi(h,h)=\varphi(\iv-\jv^0)$.
We reduce to $l\le h$ from the fact that $\varphi$ is monotonic.
Then, by {\ref{ass-H1}} and convexity we get
$$
\varphi(h,h)\le\frac{1}{2}\varphi(h+l,h-l)+\frac{1}{2}\varphi(h-l,h+l)=\varphi(h-l,h+l),
$$
leading to a contradiction.
As for the case $\iv=(h+1,h)$, assuming that $\varphi(\iv-\jv^l)<\varphi(\iv-\jv^0)$ again by {\ref{ass-H1}} and convexity we get
$$
\varphi(h+1,h)\le\frac{h+1}{2h+1}\varphi(h+1-l,h+l)+\frac{h}{2h+1}\varphi(h+l,h+1-l)=\varphi(h+1-l,h+l)
$$
and we obtain a contradiction.
Hence, for every $\iv\in A(\ell)$ such that $Z\big(Q(\iv)\big)\subset A(\ell)$ there holds
\begin{equation}\label{eq:intersection-slant}
\Big(\bigcap_{\jv\in Z(Q(\iv))}\pi_E^\varphi(\jv)\Big)\cap\big(\ell\setminus\{\jv^L\}\big)\neq\emptyset\,;
\end{equation}
again, as for \eqref{eq:intersection-flat}, \eqref{eq:intersection-slant} means that the lattice points inside $Q(\iv)$ project onto a common point of $\ell$, see Fig.~\ref{fig:area-proj2}.
An analog of \eqref{eq:intersection-slant} holds in the case (iv) of Remark \ref{flat-slant-edge}.
\end{remark}

\begin{remark}\label{rectremk}
In order to compare the energies of checkerboard configurations with different parities inside certain rectangular tiles, it will be useful to establish some inequalities involving the dissipation term.

\begin{figure}[h]
\centering
\def\svgwidth{250pt}
\begingroup%
  \makeatletter%
  \providecommand\color[2][]{%
    \errmessage{(Inkscape) Color is used for the text in Inkscape, but the package 'color.sty' is not loaded}%
    \renewcommand\color[2][]{}%
  }%
  \providecommand\transparent[1]{%
    \errmessage{(Inkscape) Transparency is used (non-zero) for the text in Inkscape, but the package 'transparent.sty' is not loaded}%
    \renewcommand\transparent[1]{}%
  }%
  \providecommand\rotatebox[2]{#2}%
  \newcommand*\fsize{\dimexpr\f@size pt\relax}%
  \newcommand*\lineheight[1]{\fontsize{\fsize}{#1\fsize}\selectfont}%
  \ifx\svgwidth\undefined%
    \setlength{\unitlength}{314.75866843bp}%
    \ifx\svgscale\undefined%
      \relax%
    \else%
      \setlength{\unitlength}{\unitlength * \real{\svgscale}}%
    \fi%
  \else%
    \setlength{\unitlength}{\svgwidth}%
  \fi%
  \global\let\svgwidth\undefined%
  \global\let\svgscale\undefined%
  \makeatother%
  \begin{picture}(1,0.28944415)%
    \lineheight{1}%
    \setlength\tabcolsep{0pt}%
    \put(0,0){\includegraphics[width=\unitlength,page=1]{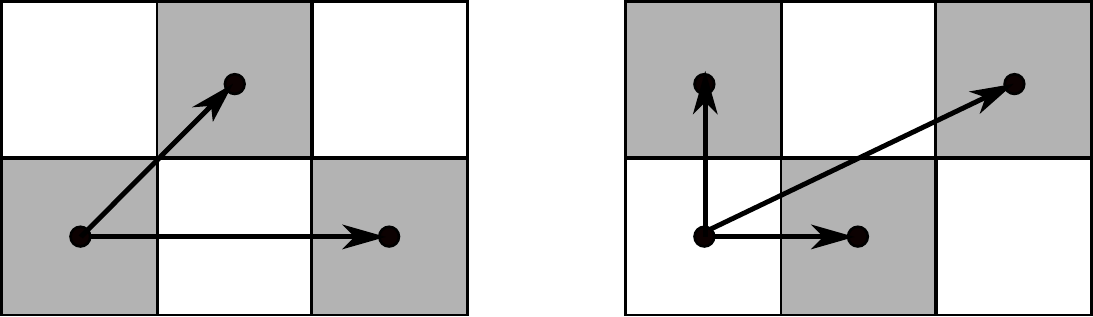}}%
    \put(0.05815638,0.02884907){\color[rgb]{0,0,0}\makebox(0,0)[lt]{\lineheight{1.25}\smash{\begin{tabular}[t]{l}$\iv$\end{tabular}}}}%
    \put(0.18002303,0.23947135){\color[rgb]{0,0,0}\makebox(0,0)[lt]{\lineheight{1.25}\smash{\begin{tabular}[t]{l}$\iv+(1,1)$\end{tabular}}}}%
    \put(0.30791751,0.02884907){\color[rgb]{0,0,0}\makebox(0,0)[lt]{\lineheight{1.25}\smash{\begin{tabular}[t]{l}$\iv+(2,0)$\end{tabular}}}}%
    \put(0.61002302,0.23947135){\color[rgb]{0,0,0}\makebox(0,0)[lt]{\lineheight{1.25}\smash{\begin{tabular}[t]{l}$\iv+(0,1)$\end{tabular}}}}%
    \put(0.7568175,0.02884907){\color[rgb]{0,0,0}\makebox(0,0)[lt]{\lineheight{1.25}\smash{\begin{tabular}[t]{l}$\iv+(1,0)$\end{tabular}}}}%
    \put(0.87702256,0.23947135){\color[rgb]{0,0,0}\makebox(0,0)[lt]{\lineheight{1.25}\smash{\begin{tabular}[t]{l}$\iv+(2,1)$\end{tabular}}}}%
  \end{picture}%
\endgroup%

\caption{The triples of points involved in \eqref{rect}.}\label{fig:remark6}
\end{figure}
Consider $E$ as in the statement of Proposition \ref{steps} and $\ell\in\mathcal{E}(E)$ such that $\ell\subset\{\x\in\R^2\,:\, x_2>0\}$ and $s(\ell)\in[0,1]$.
For the sake of simplicity we can assume (up to a translation) that $\jv^L=(0,0)$ where $\ell=\{\jv^l\}_{l=0}^L$.
If $s(\ell)\in[0,\frac{1}{3}]$,
for every $\iv\in A(\ell)$ with $i_1\in2\Z$ such that \eqref{eq:intersection-flat} holds, from \eqref{normestimate} and the properties of $\varphi$ one can infer (see Fig.~\ref{fig:remark6}) the inequality
\begin{multline}\label{rect}
d^\varphi(\iv,E)+d^\varphi(\iv+(1,1),E)+d^\varphi(\iv+(2,0),E) \\
\le d^\varphi(\iv+(0,1),E)+d^\varphi(\iv+(1,0),E)+d^\varphi(\iv+(2,1),E).
\end{multline}
The same inequality holds if $s(\ell)\in(\frac{1}{3},1]$,
for every $\iv\in A(\ell)$ with $\iv\in\Ze$ such that \eqref{eq:intersection-slant} holds.

Indeed, \eqref{eq:intersection-flat} and \eqref{eq:intersection-slant} ensure the existence of some $\jv'\in \ell$ such that $d^\varphi(\jv,E)=\varphi(\jv-\jv')$ for every $\jv\in Z\big(Q(\iv)\big)$.
Hence \eqref{normestimate} reads
\begin{equation}
d^\varphi(\iv,E)+d^\varphi(\iv+(1,1),E) \le d^\varphi(\iv+(0,1),E)+d^\varphi(\iv+(1,0),E)\,.
\label{rect_1}
\end{equation}
Now, from the fact that $i_2\ge j_2'$ (see Remark \ref{BS-proj}) and the monotonicity of the norm $\varphi$, we have $\varphi(\iv+(2,0)-\jv')\le\varphi(\iv+(2,1)-\jv')$, whence we get
\begin{equation} 
d^\varphi(\iv+(2,0),E)\le d^\varphi(\iv+(2,1),E)\,.
\label{rect_2}
\end{equation}
Inequality \eqref{rect} then follows by adding term by term \eqref{rect_1} and \eqref{rect_2}.
\end{remark}

\begin{remark}\label{remk:monotone-edges}
As a last preparatory remark to the proof of Proposition~\ref{steps}, we analyze and motivate assumption \eqref{monotone-edges} on the sets that intervene in minimization problem \eqref{min-prob-steps}.
Assumption \eqref{monotone-edges} ensures that for every discrete edge there are infinitely many $2\times2$-squares whose centers project onto it.
This property is crucial to define a well-posed covering argument (see Section \ref{sec:proofsteps}).
Specifically, let $E$ be as in the statement of Proposition \ref{steps} and $\ell\in\mathcal{E}(E)$ be such that $\ell\subset\{\x\in\R^2 \,:\, x_2>0\}$ and
\begin{equation}\label{slope-cond}
s(\ell)\in[0,1]\,.
\end{equation}
We claim that, for every such $\ell$ the following property holds:
\begin{equation}\label{tech-ass}
\text{for every } h\in\N \text{ there exists } \iv\in A(\ell)\cap\big(\Z\times\{h\}\big)\cap\Ze \,:\, Z(Q(\iv))\subset A(\ell),
\end{equation}
where we have set $\ell=\{\jv^l\}_{l=0}^L$ and $\jv^L=(0,0)$ for simplicity.
This claim is proved inductively (on parameter labeling clockwise consecutive discrete edges) by showing that for any triple of consecutive edges of $E$, say $\ell^-,\ell,\ell^+$ with $\ell^-$ satisfying \eqref{tech-ass}, we can find a point $\iv\in(\Z\times\{h\}\big)\cap\Ze$ for which, thanks to \eqref{monotone-edges} and the translation invariance of the distance, there holds $d^\varphi(\jv,\ell)\le d^\varphi(\jv,\ell^-\cup\ell^+)$ for every $\jv\in Z\big(Q(\iv)\big)$ and every $h\ge0$.

\begin{figure}[htp]
\centering
\includegraphics{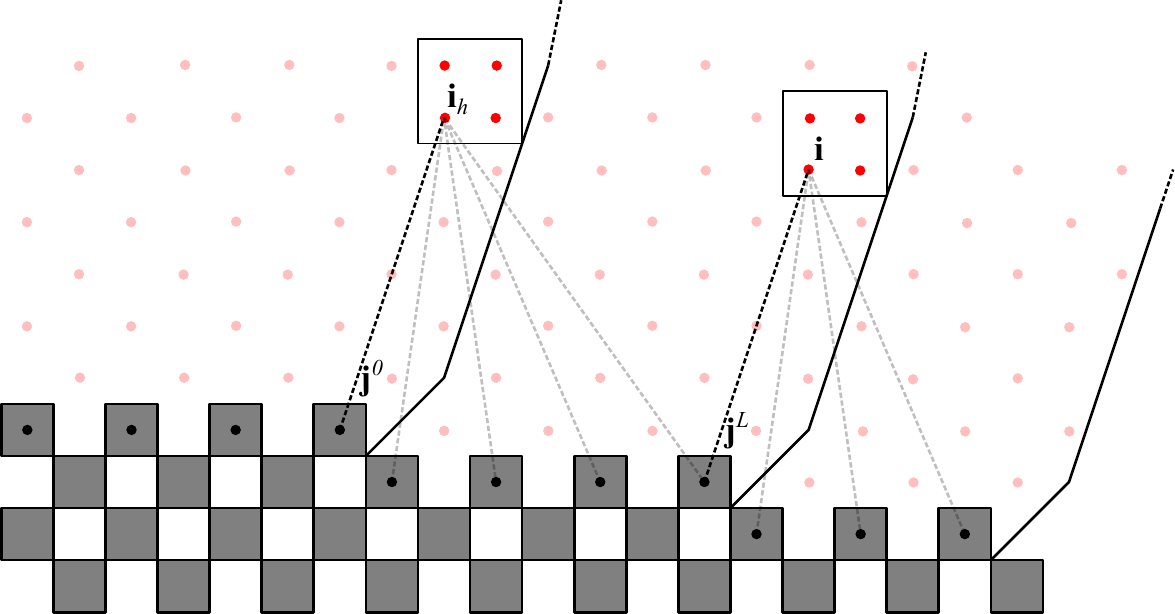}
\caption{An example of the situation described in Remark \ref{remk:monotone-edges} in the case $s(\ell)\le\frac{1}{3}$.
The lighter dots represent the points of $\Ze$ inside $A(\ell^-)$, $A(\ell)$ and $A(\ell^+)$.}
\label{fig:remk-tech-ass}
\end{figure}
Let $\ell_0=\{\jv_0^l\}_{l=0}^{L_0}$ be the first (clockwise-ordered) edge such that $s(\ell)\geq0$ 
and set
$$
\ell_0':=\begin{cases}
\ell_0 & \text{if } \bm\nu(\ell_0)=(0,1) \\
\{\jv_0^0\} & \text{otherwise.}
\end{cases}
$$
It is straightforward that \eqref{tech-ass} is satisfied for $\ell=\ell_0'$ (reasoning as in Remark \ref{BS-proj}) where we have set
\begin{equation}\label{ext-edge-top0}
A(\jv^0_0)=\big\{\iv\in \Z^2 \,:\, \pi_E^\varphi(\iv)\ni\jv_0^0\big\}.
\end{equation}
Consider $\ell,\ell^-,\ell^+\in\mathcal{E}(E)$ satisfying \eqref{slope-cond}, with $\ell^-$ preceding $\ell$, $\ell$ preceding $\ell^+$.
Write $\ell^-=\{\jv^{-,l}\}_{l=0}^{L^-}$ and $\ell^+=\{\jv^{+,l}\}_{l=0}^{L^+}$.
We point out that if $\ell^-$ coincide with $\ell_0=\{\jv_0^0\}$ then $\ell=\ell_0$.

Assume that $\ell^-$ satisfies \eqref{tech-ass}.
Consider first the case $s(\ell)\leq \frac{1}{3}$ (see Fig.~\ref{fig:remk-tech-ass}).
For any fixed $h\in\N$, we set
$\iv_h=\argmax{}\{i_1 \,:\, \iv\in\Ze,\, Z\big(Q(\iv)\big)\subset A(\ell^-),\, i_2=h+1\}$,
which is well defined since we have assumed that $\ell^-$ satisfies \eqref{tech-ass}.
By Remark \ref{BS-proj} and by definition of $A(\ell^-)$ \eqref{ext-edge-flat}, there holds
\begin{equation}\label{remk-tech-ass-i}
d^\varphi(\jv,E) = \varphi(\jv-\jv_0) < d^\varphi(\jv,\ell) \quad \text{for every } \jv\in Z\big(Q(\iv_h)\big).
\end{equation}
Set $\iv:=\iv_h+(2L-1,-1)$ and note that $\iv\in\Ze$ and $i_2=h$.
Note also that the definition of $\iv_h$ yields $Z\big(Q(\iv)\big)\cap A(\ell^-)=\emptyset$.
Then, by \eqref{remk-tech-ass-i} and the translation invariance of the distance, since $\jv_0+(2L-1,-1)=\jv_L$ we get
$$
d^\varphi(\jv,\ell) = \varphi(\jv-\jv_L) < d^\varphi(\jv,\ell+(2L-1,-1)), \quad \text{for every } \jv\in Z\big(Q(\iv)\big).
$$
Now, \eqref{monotone-edges} yields $d^\varphi(\jv,\ell+(2L-1,-1))\le d^\varphi(\jv,\ell^+)$.
Indeed, if $s(\ell^+)\leq\frac{1}{3}$
then $L^+\le L$ by \eqref{monotone-edges}, thus $\ell^+\subset\ell+(2L-1,-1)$.
If instead $s(\ell^+)>\frac{1}{3}$
then by the monotonicity of $\varphi$ we get $d^\varphi(\jv,\ell^+)\ge \varphi(\jv-\jv^L)$.
\begin{figure}[htp]
\centering
\includegraphics{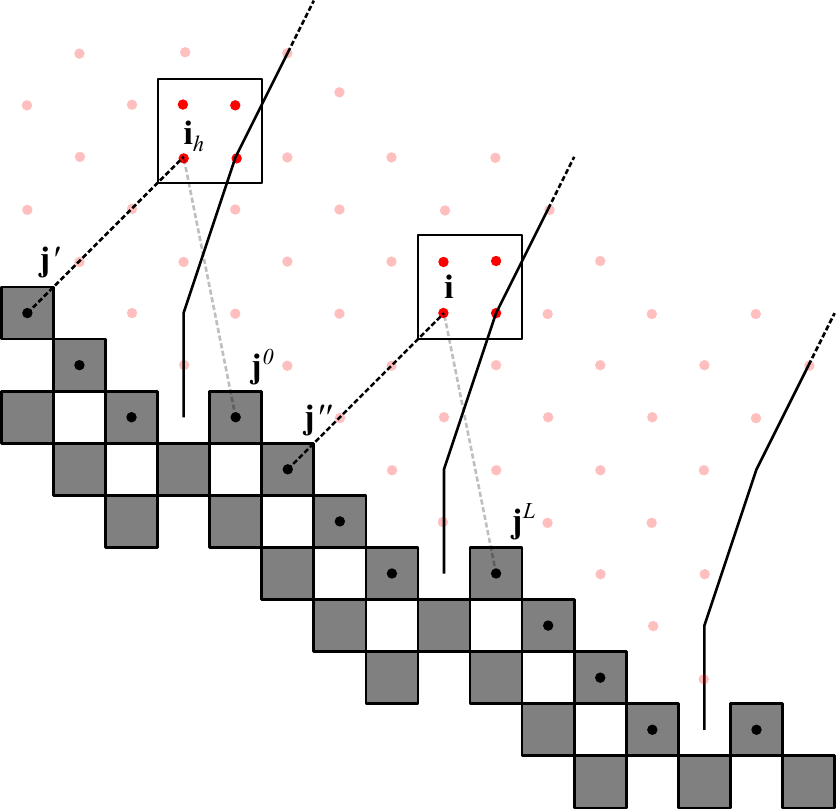}
\caption{An example of the situation described in Remark \ref{remk:monotone-edges} in the case $s(\ell)<\frac{1}{3}$. The lighter dots represent those points of $\Ze$ lying inside $A(\ell^-)$, $A(\ell)$ and $A(\ell^+)$.}
\label{fig:remk-tech-ass2}
\end{figure}

Now consider the case $s(\ell)>\frac{1}{3}$ (Fig.~\ref{fig:remk-tech-ass2}).
For any fixed $h\in\N$, we set
$$
\iv_h=\argmax{}\{i_1 \,:\, \iv\in\Ze,\, Z\big(Q(\iv)\big)\subset A(\ell^-),\, i_2=h+L-1\},
$$
which is well defined as above.
By Remark \ref{BS-proj} and by definition of $A(\ell^-)$ \eqref{ext-edge-slant} there holds
\begin{equation}\label{remk-tech-ass-ii}
d^\varphi(\jv,E) = \varphi(\jv-\jv') \le \varphi(\jv-\jv^0) = d^\varphi(\jv,\ell) \quad \text{for every } \jv\in Z\big(Q(\iv_h)\big),
\end{equation}
for some $\jv'\in\{j^{-,l}\}_{l=0}^{L^--1}$.
Again, set $\iv:=\iv_h+(L-1,-L+1)$ and note that $\iv\in\Ze$ and $i_2=h$.
Reasoning as above we have $Z\big(Q(\iv)\big)\cap A(\ell^-)=\emptyset$.
By \eqref{remk-tech-ass-ii}, the translation invariance of the distance and since $\jv'':=\jv'+(L^-+1,-L^-+1)\in\{\jv_l\}_{l=0}^{L^--1}\subset\ell$ we get
$$
d^\varphi(\jv,\ell) = \varphi(\jv-\jv'') \le \varphi\big(\jv-\jv^L), \quad \text{for every } \jv\in Z\big(Q(\iv)\big).
$$
From \eqref{monotone-edges} we have $s(\ell)>\frac{1}{3}$
and $L^+\ge L$, thus $\varphi\big(\jv-\jv^L)=d^\varphi(\jv,\ell^+)$, arguing as in Remark \ref{BS-proj}.
\end{remark}

\subsection{Proof of Proposition~\ref{steps}} \label{sec:proofsteps}

We are now ready to prove the main result on the structure of the minimizer of $\mathcal{F}_\alpha^\varphi(\cdot,E)$.
For the covering argument that we will introduce, the $2\times2$-squares are not sufficient.
Therefore, we define a new class of tiles for the covering.

\begin{figure}[htbp]
\centering
\def\svgwidth{300pt}
\begingroup%
  \makeatletter%
  \providecommand\color[2][]{%
    \errmessage{(Inkscape) Color is used for the text in Inkscape, but the package 'color.sty' is not loaded}%
    \renewcommand\color[2][]{}%
  }%
  \providecommand\transparent[1]{%
    \errmessage{(Inkscape) Transparency is used (non-zero) for the text in Inkscape, but the package 'transparent.sty' is not loaded}%
    \renewcommand\transparent[1]{}%
  }%
  \providecommand\rotatebox[2]{#2}%
  \newcommand*\fsize{\dimexpr\f@size pt\relax}%
  \newcommand*\lineheight[1]{\fontsize{\fsize}{#1\fsize}\selectfont}%
  \ifx\svgwidth\undefined%
    \setlength{\unitlength}{681.06583723bp}%
    \ifx\svgscale\undefined%
      \relax%
    \else%
      \setlength{\unitlength}{\unitlength * \real{\svgscale}}%
    \fi%
  \else%
    \setlength{\unitlength}{\svgwidth}%
  \fi%
  \global\let\svgwidth\undefined%
  \global\let\svgscale\undefined%
  \makeatother%
  \begin{picture}(1,0.22750753)%
    \lineheight{1}%
    \setlength\tabcolsep{0pt}%
    \put(0,0){\includegraphics[width=\unitlength,page=1]{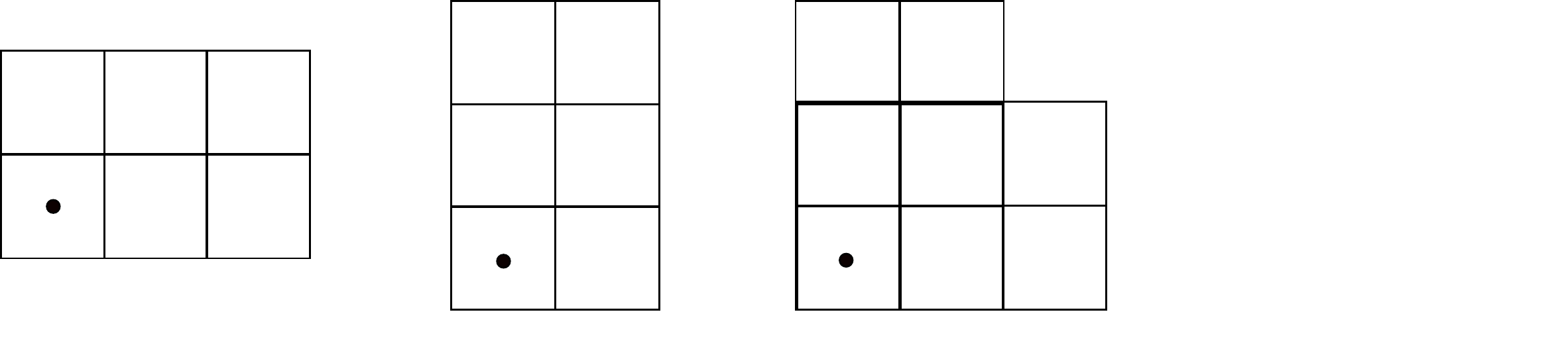}}%
    \put(0.0487898,0.02652753){\color[rgb]{0,0,0}\makebox(0,0)[lt]{\lineheight{1.25}\smash{\begin{tabular}[t]{l}$R^{\rm hor}({\bf i})$\end{tabular}}}}%
    \put(0.30640705,-0.00215512){\color[rgb]{0,0,0}\makebox(0,0)[lt]{\lineheight{1.25}\smash{\begin{tabular}[t]{l}$R^{\rm ver}({\bf i})$\end{tabular}}}}%
    \put(0.60484754,-0.00204328){\color[rgb]{0,0,0}\makebox(0,0)[lt]{\lineheight{1.25}\smash{\begin{tabular}[t]{l}$R^+({\bf i})$\end{tabular}}}}%
    \put(0,0){\includegraphics[width=\unitlength,page=2]{rettangoli.pdf}}%
    \put(0.90077019,-0.00204328){\color[rgb]{0,0,0}\makebox(0,0)[lt]{\lineheight{1.25}\smash{\begin{tabular}[t]{l}$R^-({\bf i})$\end{tabular}}}}%
  \end{picture}%
\endgroup%

\caption{The different tiles of the covering.}\label{fig:rectangles}
\end{figure}
\begin{definition}\label{def:newtiles}
For every $\iv\in\Z^2$ we set
\begin{eqnarray*}
&R^{\rm hor}(\iv) := Q(\iv)\cup q(\iv+(2,0))\cup q(\iv+(2,1))\,,
&R^+(\iv) := R^{\rm ver}(\iv)\cup R^{\rm hor}(\iv)\,,\\
&R^{\rm ver}(\iv) := Q(\iv)\cup q(\iv+(0,2))\cup q(\iv+(1,2))\,,
&R^-(\iv) := R^{\rm ver}(\iv+(1,-1))\cup R^{\rm hor}(\iv)\,,
\end{eqnarray*}
where $Q(\iv)$ is defined as in \eqref{quadratone} (see Fig.~\ref{fig:rectangles}).
\end{definition}

For every discrete edge $\ell\in\mathcal{E}(E)$, we will define a covering of the region outside $E$ projecting onto $\ell$. We warn the reader that the choice of the tiles will depend both on the slope $s(\ell)$ and the neighboring edges. Heuristically, where the discrete projection $\pi_E^\varphi$ behaves as in the case of the distance from a point, we will still use the tiles $Q(\iv)$, as in the proof of Proposition \ref{firststep}.
In order to match the coverings of the regions projecting onto adjacent edges, we will need tiles $R^{\rm hor}(\iv)$ and $R^{\rm ver}(\iv)$ (see Steps 2 and 3 of the proof), in which the even checkerboard is the minimizer by virtue of Remark \ref{rectremk}.
Moreover, we will take into account that the effective boundary $\partial^{\rm eff}E$ may present some irregularities due to the discrete nature of the problem (see Steps 4 and 5).
In that case, where needed, we will use the ``siding tiles'' $R^+(\iv)$ and $R^-(\iv)$ which are compatible with the rest of the covering and still favor the even configurations in the local minimum problems therein.

\begin{proof}[Proof of Proposition {\rm\ref{steps}}]
According to the discussion in Remark \ref{flat-slant-edge}, we reduce the description of the covering corresponding to the discrete edges of $E$ contained in $\{\x\in\R^2\,:\,x_2\ge0\}$ complying with
\begin{equation}\label{edge-simpl}
0\leq s(\ell)\leq 1\,,
\end{equation}
as the covering for the remaining edges can be obtained symmetrically.
We divide the proof into several steps.

{\bf Step 1: ordering of the discrete edges.}
We label in clockwise order the set of discrete edges of $E$; namely, $\{\ell_m\}_{m=1}^{m_1}\subset\mathcal{E}(E)$.
For our convenience, writing $\ell_1=\{\jv^l\}_{l=0}^{L}$, with a slight abuse of notation, in the case that $s(\ell_1)=0$ we write (without relabelling) $\ell_1=\{\jv_1^l\}_{l=\lfloor\frac{L}{2}\rfloor}^L$.
If $s(\ell_1)>0$ we set $\ell_0:=\{\jv^0_1\}$.
Now we set
$$
m_0 := \max\Big\{0\le m\le M \,:\, s(\ell_m)\le\frac{1}{3}\Big\}.
$$

\begin{figure}[htbp]
\centering
\includegraphics[width=0.4\textwidth]{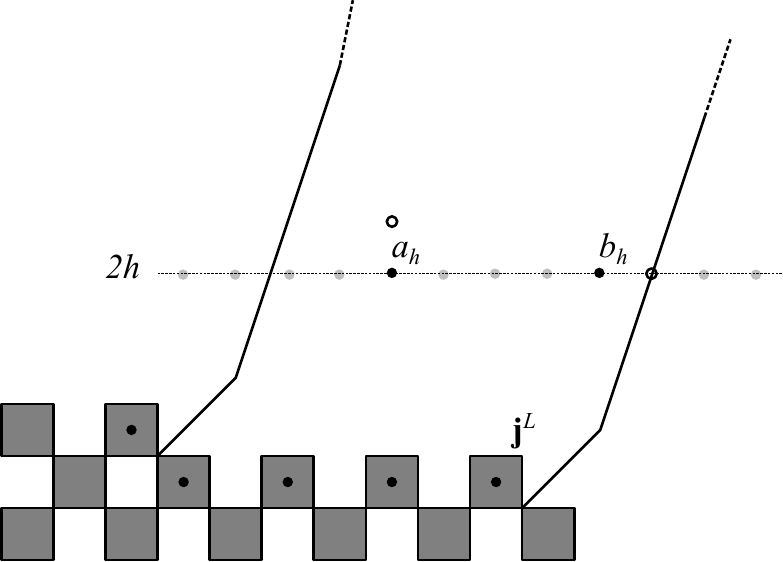}
\caption{The point $(a_h,2h+1)\in A(\ell)$ (black circle on the left).
The point $(b_h+1,2h)\in A(\ell_{m+1})$ (black circle on the right).
The gray points represent $\Z\times\{2h\}$.}
\label{fig:a-b-def}
\end{figure}

{\bf Step 2: covering of the region outside ${E}$ projecting onto $\ell_m$ with $0< m<m_0$.}
We set $\ell_m:=\{\jv^l\}_{l=0}^{L}$ assuming, without loss of generality, that $\jv^L=(0,0)$.
We also define
\begin{equation}\label{extrema}
\begin{aligned}
a_h &:= \min\{h'\in2\Z\,:\,(h',2h+1)\in A(\ell_m)\}\,, \\
b_h &:= \max\{h'\in2\Z\,:\,(h'+1,2h)\in A(\ell_m)\}\,,
\end{aligned}
\end{equation}
for every $h\in\N$, where $A(\ell)$ is defined in \eqref{ext-edge-flat} (see Fig.~\ref{fig:a-b-def}).
In the case $m=1$ and $s(\ell_1)=0$, the set $A(\ell_1)$ is still as in \eqref{ext-edge-flat} with $\{\jv^l\}_{l=\lfloor\frac{L}{2}\rfloor}^L$ in place of $\{\jv^l\}_{l=1}^L$.
Note that, by Remark \ref{remk:monotone-edges}, assumption \eqref{monotone-edges} yields that $a_h$ and $b_h$ are well-defined for every $h\in\N$.
We then introduce the set
\begin{equation}\label{indices}
\I(\ell_m):=\bigcup_{h\ge0}\{(h',2h)\,:\,h'\in2\Z,\,a_h\le h'\le b_h\}\,.
\end{equation}

\begin{figure}[htbp]
\centering
\includegraphics[width=0.4\textwidth]{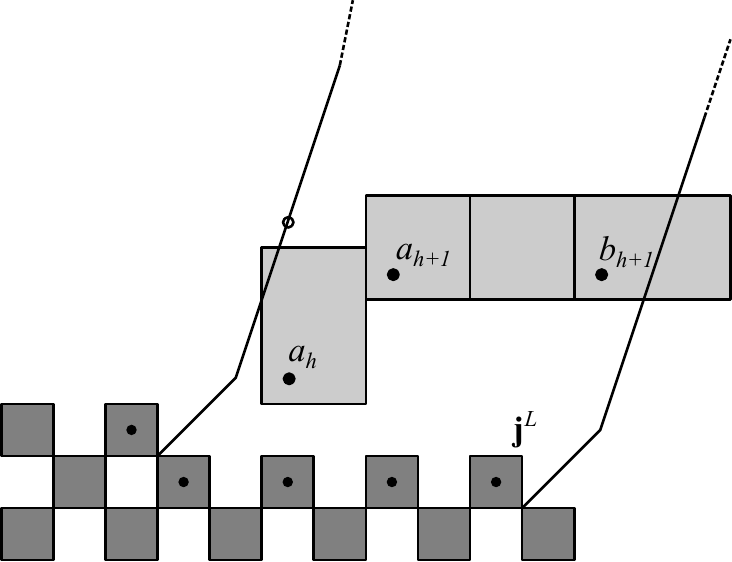}
\quad
\includegraphics[width=0.3\textwidth]{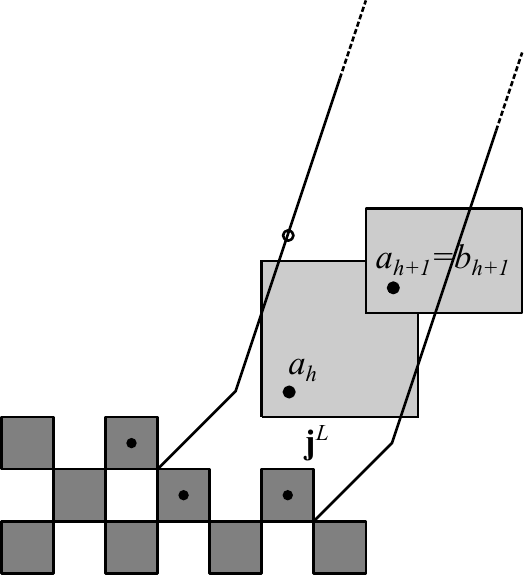}
\caption{Examples of coverings defined in \eqref{covering-0}. The black circle represents the point $(a_{h-1},2h+3)$.}
\label{fig:ricoprimenti}
\end{figure}
Correspondingly, for every $h\in\N$ we define the following covering (see Fig.~\ref{fig:ricoprimenti}):
\begin{equation}\label{covering-0}
\begin{aligned}
C(\iv) &:=\begin{cases}
R^{\rm ver}(\iv) &\iv=(a_h,2h) \text{ and } (a_h,2h+3)\not\in A(\ell_m)\\
Q(\iv) &\iv=(a_h,2h) \text{ and } (a_h,2h+3)\in A(\ell_m) \\
Q(\iv) &i_1\in2\Z,\, a_h<i_1<b_h \\
R^{\rm hor}(\iv) &\iv=(b_h,2h)
\end{cases}, \quad \text{if } a_h<b_h\,, \\
C(\iv) &:= \begin{cases}
R^+(\iv), &\iv=(a_h,2h) \mbox{ and } (a_h,2h+3)\not\in A(\ell_m) \\
R^{\rm hor}(\iv), &\iv=(a_h,2h) \mbox{ and } (a_h,2h+3)\in A(\ell_m)
\end{cases}, \quad \text{if } a_h=b_h\,,
\end{aligned}
\end{equation}
\begin{figure}[htbp]
\centering
\includegraphics[width=0.4\textwidth]{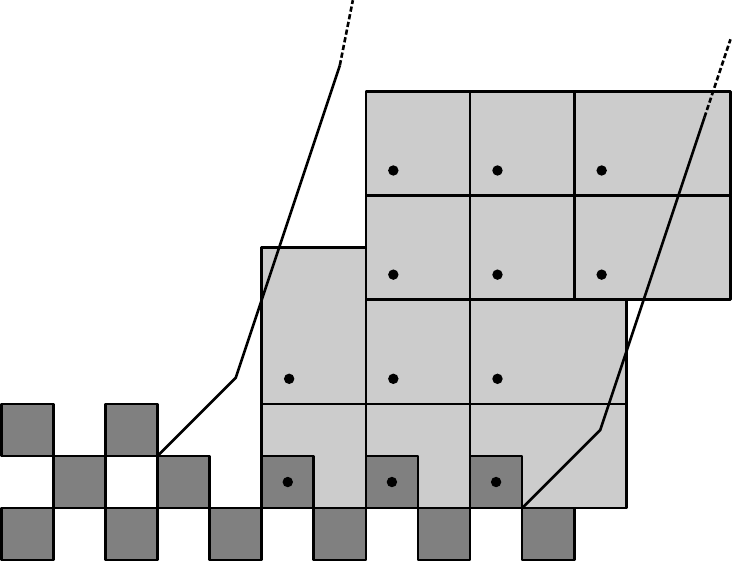}
\quad
\includegraphics[width=0.3\textwidth]{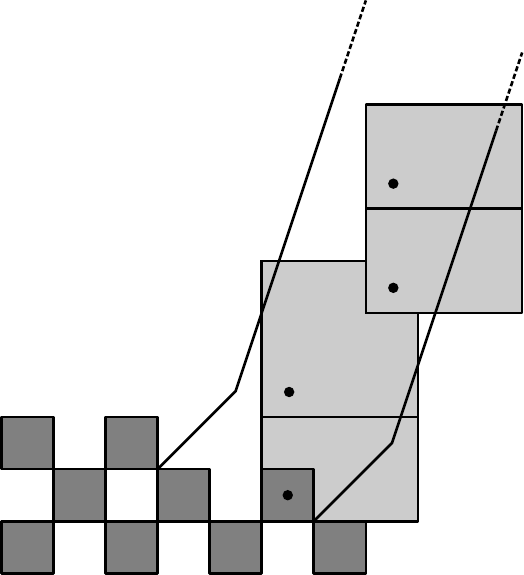}
\caption{Examples of whole coverings defined in formula \eqref{covering-0}.
The black dots represent the points of $\I(\ell_m)$.}
\label{fig:ricoprimenti-compl}
\end{figure}
(see Fig.~\ref{fig:ricoprimenti-compl} for an example of $\{C(\iv)\,:\, \iv\in \I(\ell_m)\}$).

\begin{figure}[htbp]
\centering
\includegraphics[width=0.3\textwidth]{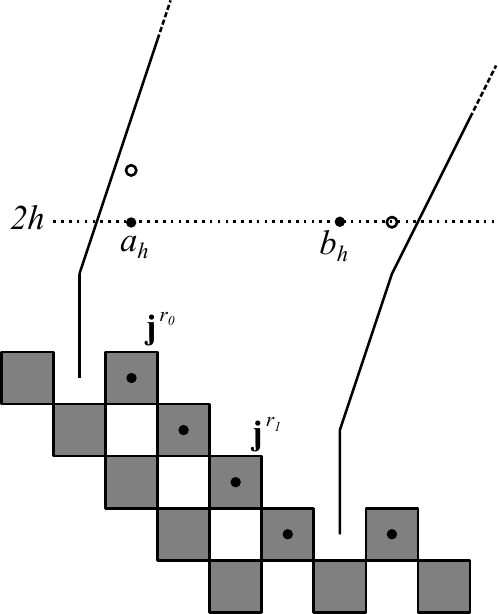}
\caption{The point $(a_h,2h+1)\in A(\ell_m)$ (black circle on the left).
The point $(b_h+1,2h)\in A(\ell_m)$ (black circle on the right).}
\label{fig:a-b-slant}
\end{figure}

{\bf Step 3: covering of the region outside ${E}$ projecting onto $\ell_m$ with $m_0\le m\le m_1-1$.}
As before, we label clockwise the set of points
$\{\jv^r\}_{r\geq0}=\bigcup_{m=m_0+1}^{m_1}\ell_m\setminus\{\ell_0\}$.
For every $m_0+1\le m\le m_1-1$, writing $\ell_m=\{\jv^l\}_{l=0}^L$ we define 
\begin{equation}\label{order-points}
r_0:=\min\{r\in2\Z:\,\jv^r\in\ell_m\} \,\mbox{ and }\, r_1:=\max\{r\in2\Z:\,\jv^r\in\ell_m\setminus\{\jv^L\}\}\,.
\end{equation}
\begin{figure}[htbp]
\centering
\includegraphics[width=0.3\textwidth]{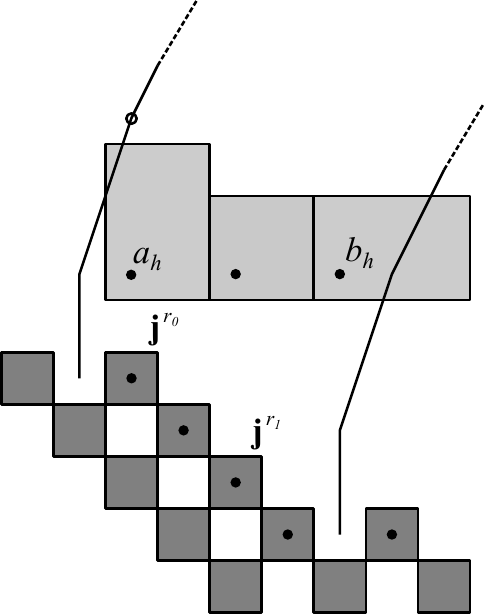}
\quad
\includegraphics[width=0.3\textwidth]{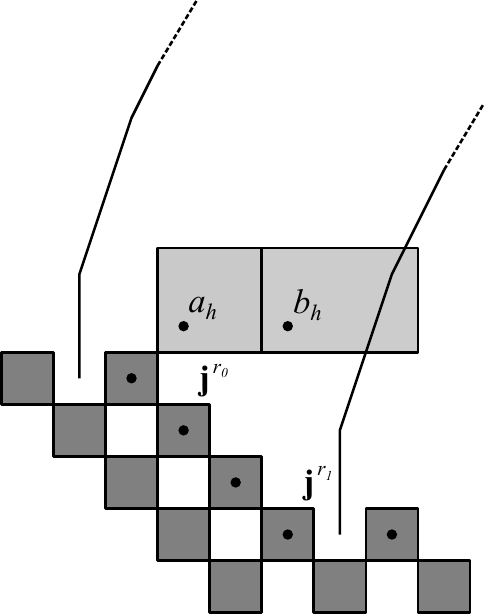}
\caption{Examples of coverings of the two possible parities defined in formula \eqref{covering-0}. The black circle represents the point $(a_h,2h+3)$.}
\label{fig:ricoprimenti-slant}
\end{figure}

\begin{figure}[htbp]
\centering
\includegraphics[width=0.3\textwidth]{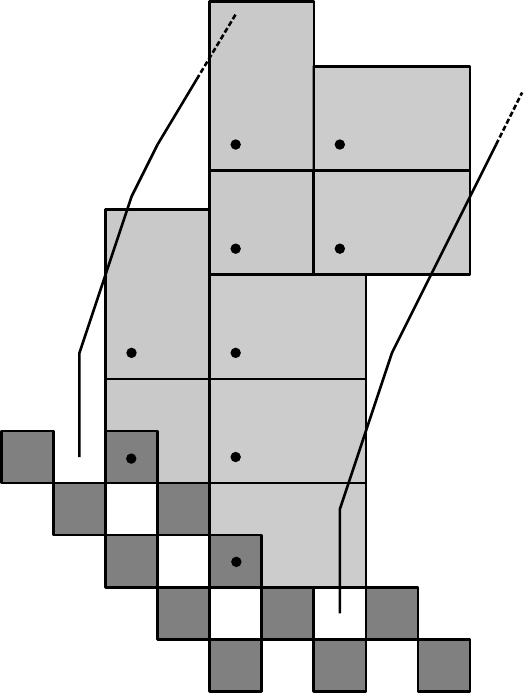}
\quad
\includegraphics[width=0.3\textwidth]{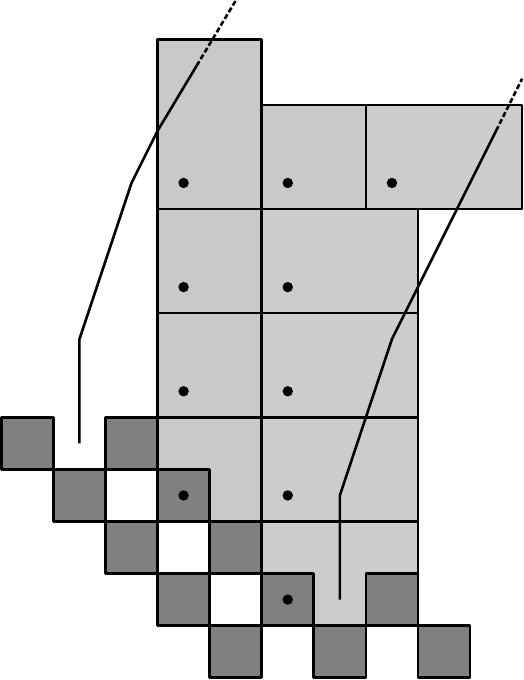}
\caption{Examples of whole coverings defined in formula \eqref{covering-0} of two different parities. The black dots represent the points of $\I(\ell_m)$.}
\label{fig:ricoprimenti-slant-compl}
\end{figure}

Fix first $\ell_m$ with $m_0+1< m\le m_1-1$ and assume, without loss of generality, that $\jv^{r_1}=(0,0)$.
Now, for every $h\in\N$ we determine the integers $a_h$ and $b_h$ as in \eqref{extrema} (see Fig.~\ref{fig:a-b-slant}), which are well defined by Remark \ref{remk:monotone-edges}, where $A(\ell)$ is as in \eqref{ext-edge-slant}.
Correspondingly, we define the sets $\I(\ell_m)$ as in \eqref{indices} and $C(\iv)$ for every $\iv\in\I(\ell_m)$ as in \eqref{covering-0}, respectively (see Figg.~\ref{fig:ricoprimenti-slant} and \ref{fig:ricoprimenti-slant-compl}).

The covering outside the discrete edges $\ell_{m_0}$ and $\ell_{m_0+1}$ must be treated separately.
Let $r_0,r_1$ be as in \eqref{order-points} with $m=m_0+1$.
Again, we assume that $\jv^{r_1}=(0,0)$, define $a_h$, $b_h$ as in \eqref{extrema} for every $h\in\N$ with $A(\ell_{m_0})\cup A(\ell_{m_0+1})$ in place of $A(\ell_m)$
and the set $\I(\ell_{m_0}\cup\ell_{m_0+1})$ as in \eqref{indices}.
The sets $C(\iv)$ are defined, for every $\iv\in\I(\ell_{m_0}\cup\ell_{m_0+1})$, as in \eqref{covering-0} with $A(\ell_{m_0})\cup A(\ell_{m_0+1})$ in place of $A(\ell_m)$
(see Fig.~\ref{fig:ricoprimenti-m0}).
Note that, in this case $a_h\neq b_h$ for every $h\in\N$.
\begin{figure}[htbp]
\centering
\includegraphics[width=.48\linewidth]{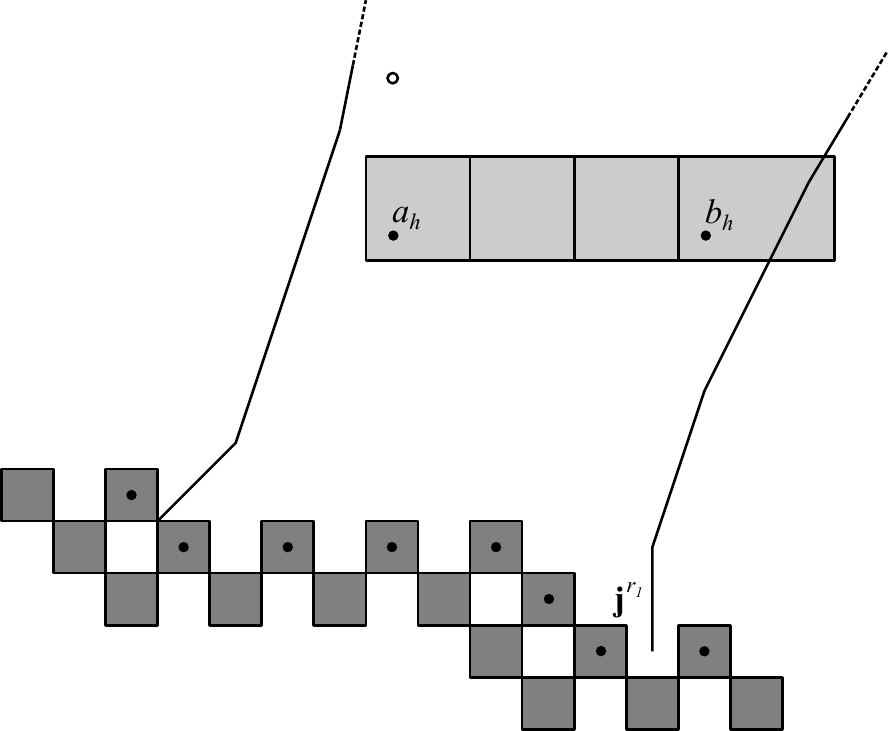}
\quad
\includegraphics[width=.48\linewidth]{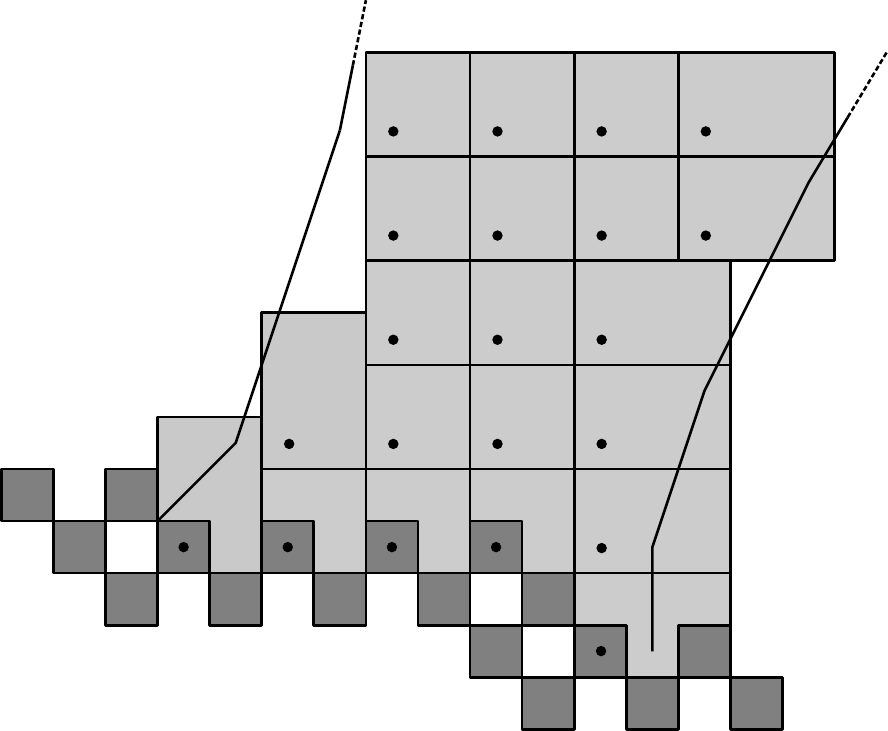}
\caption{On the left, an example of $a_h$ and $b_h$, and the black circle represents the point $(a_h,2h+3)$.
On the right the corresponding covering, where the black dots represent the points of $\I(\ell_{m_0-1}\cup\ell_{m_0})$.}
\label{fig:ricoprimenti-m0}
\end{figure}

\begin{figure}[htbp]
\centering
\includegraphics{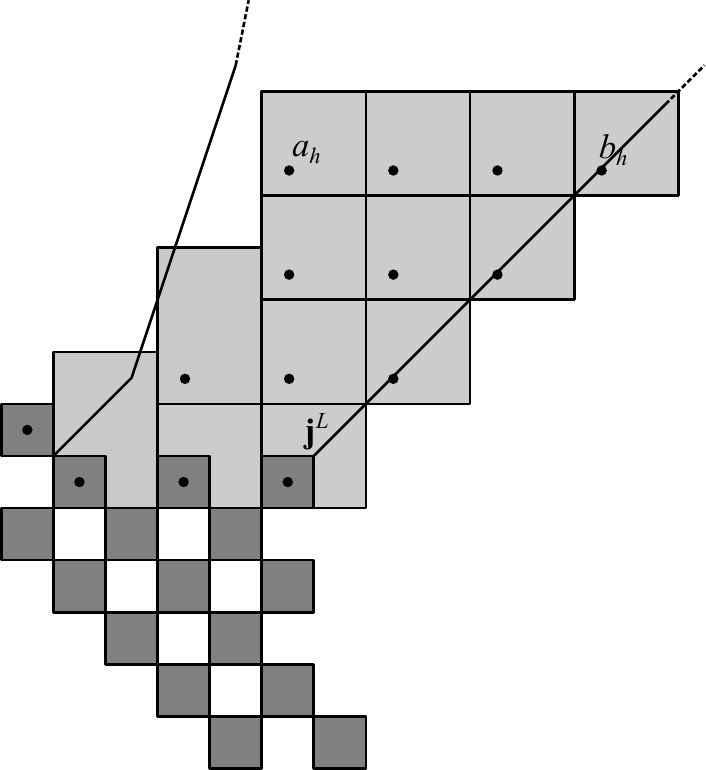}
\caption{Example of covering outside $\ell_{m_1}$ in the case (i).}
\label{fig:ricoprimenti-m1-i}
\end{figure}
{\bf Step 4: covering of the region outside ${E}$ projecting onto $\ell_{m_1}$.}
We set $\ell_{m_1}=\{\jv^l\}_{l=0}^L$ with $\jv^L=(0,0)$.
There are different possible cases depending on $\bm\nu(\ell_{m_1})$:

\begin{figure}[h]
\centering
\includegraphics{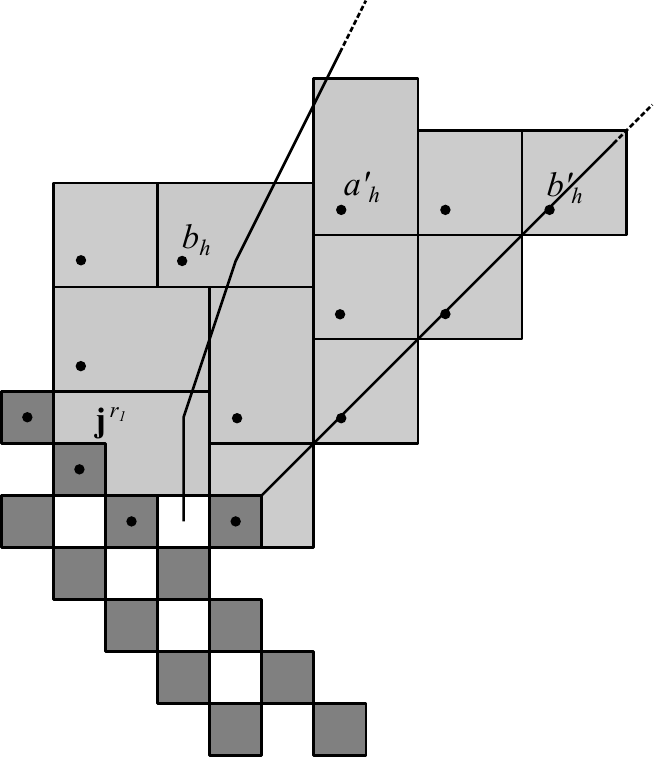}
\quad
\includegraphics{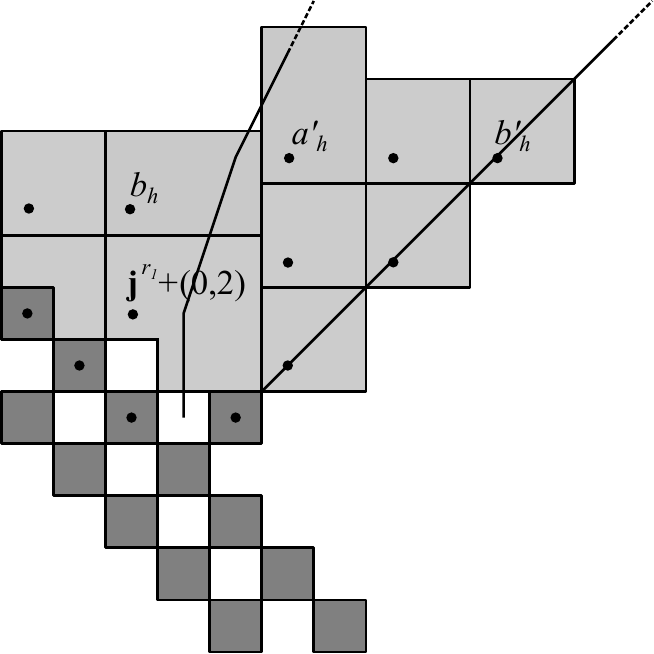}
\caption{The covering in the cases $\jv^{r_1}=\jv^{L-1}$ (on the left) and $\jv^{r_1}=\jv^L$ (on the right).}
\label{fig:ricoprimenti-m1-ii}
\end{figure}
\smallskip
(i) let $m_0=m_1$; \emph{i.e.}, $s(\ell_m)\le\frac{1}{3}$ for every $m$.
We set, for every $h\in\N$, $a_h$ as in \eqref{extrema}, $b_h=2h$ and $\I(\ell_{m_1})$ as in \eqref{indices}.
Then, $C(\iv)$ is defined as in \eqref{covering-0} for every $\iv\in\I(\ell_{m_1})\setminus\{(b_h,2h)\}_{h\in\N}$ and $C((b_h,2h))=Q((b_h,2h))$ for every $h\in\N$ (Fig.~\ref{fig:ricoprimenti-m1-i});

\begin{figure}[htbp]
\centering
\includegraphics{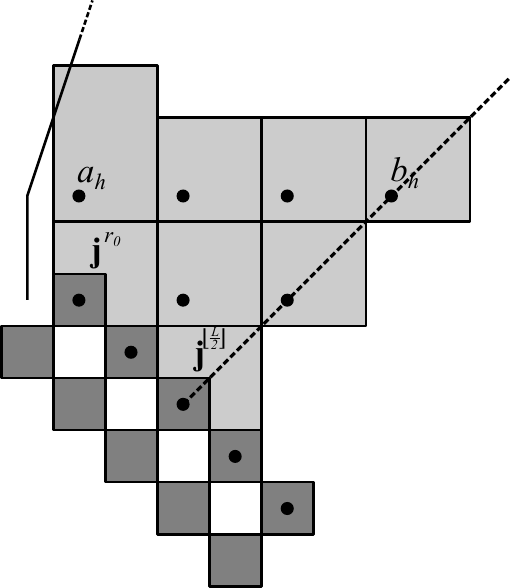}
\caption{The covering considered in (iii) in the case $L$ even.}
\label{fig:ricoprimenti-m1-iii-ev}
\end{figure}
\smallskip
(ii) let
$
\frac{1}{3}<s(\ell_{m_1})<1
$
and let $r_1$ be defined as in \eqref{order-points} with $m=m_1$.
Then $a_h$ and $b_h$ are as in \eqref{extrema} for every $h\in\N$ with $A(\ell_{m_1})$ or $A(\ell_{m_1})\cup A(\ell_{m_1-1})$ in place of $A(\ell_m)$ whether $m_1-1>m_0$ or $m_1-1=m_0$, respectively.
We define $\I(\ell_{m_1})$ as in \eqref{indices}.
If $\jv^{r_1}=\jv^{L-2}$, $C(\iv)$ is defined as in \eqref{covering-0}.
Whereas, if $\jv^{r_1}=\jv^{L-1}$, $C(\iv)$ is defined as in \eqref{covering-0} for every $\iv\in\I(\ell_{m_1})\setminus\{\jv^{r_1},\jv^{r_1}+(0,2)\}$ and
$$
C(\jv^{r_1})=\emptyset,
\quad
C(\jv^{r_1}+(0,2))=R^-(\jv^{r_1}+(0,2)).
$$
Then, setting $A(\jv^L)=\{\iv\in\Z^2 \,:\, i_1,i_2>0,\, \pi_E^\varphi(\iv)=\jv^L \}$, we introduce the integers
$$
\begin{aligned}
a_h' &=\begin{cases}
\min\{h'\in2\Z\,:\, (h',2h+1)\in A(\jv^L)\}&\text{if } \jv^{r_1}=\jv^{L-2}\\
\min\{h'\in2\Z+1\,:\, (h',2h+2)\in A(\jv^L)\}&\text{if } \jv^{r_1}=\jv^{L-1}
\end{cases}\\
b_h' &=\begin{cases}
2h&\text{if } \jv^{r_1}=\jv^{L-2}\\
2h+1&\text{if } \jv^{r_1}=\jv^{L-1}.
\end{cases}
\end{aligned}
$$
Now, we define $\I(\ell_{m_1}')$ as in \eqref{indices} with $a_h', b_h'$ in place of $a_h$ and $b_h$, and the tile
$C(\iv)$ as in \eqref{covering-0} for every $\iv\in\I(\jv^L)\setminus\{(b_h',b_h')\}_{h\in\N}$, and $C((b_h',b_h'))=Q((b_h',b_h'))$ (see Fig.~ \ref{fig:ricoprimenti-m1-ii});

\smallskip
(iii) consider now the case $s(\ell_{m_1})=1$.
Let $r_0$ be defined as in \eqref{order-points} with $m=m_1$.
Without relabeling, we set $\ell_{m_1}:=\{\jv^l\}_{l=0}^{\lfloor\frac{L}{2}\rfloor}$ and assume $\jv^{\lfloor\frac{L}{2}\rfloor}=(0,0)$.
Here the covering depends on the parity of $L$.
If $L$ is even, $a_h$ is defined as in \eqref{extrema} with $m=m_1$ and $b_h=2h$ for every $h\in\N$.
$\I(\ell_{m_1})$ is defined as in \eqref{indices}.
Then $C(\iv)$ is defined as in \eqref{covering-0} for every $\iv\in \I(\ell_{m_1})\setminus\{(b_h,2h)\}_{h\in\N}$ and $C((b_h,2h))=Q((b_h,2h))$ (Figure \ref{fig:ricoprimenti-m1-iii-ev}).
\begin{figure}[htbp]
\centering
\includegraphics{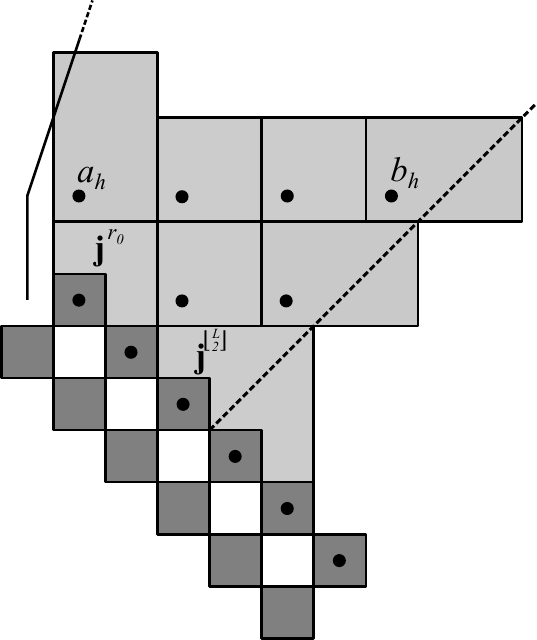}
\quad
\includegraphics{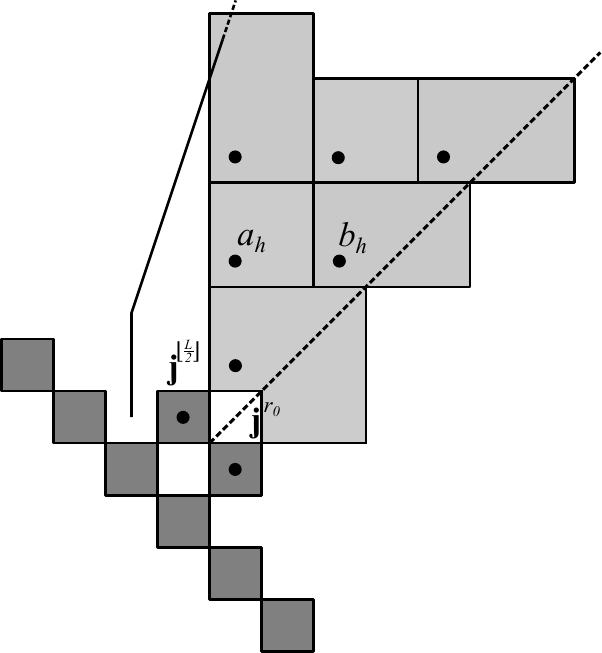}
\caption{The coverings defined in (iii), in the case $L$ odd, for $\jv^{r_0}\neq\jv^L$ (on the left) and $\jv^{r_0}=\jv^L$ (on the right).}
\label{fig:ricoprimenti-m1-iii-od}
\end{figure}
If $L$ is odd, analogously to what done in case (ii), for every $h\in\N$ we define
$$
\begin{aligned}
a_h &=\begin{cases}
\min\{h'\in2\Z\,:\, (h',2h+1)\in A(\ell_{m_1})\}&\text{if } \jv^{r_0}\neq\jv^L\\
\min\{h'\in2\Z+1\,:\, (h',2h+2)\in A(\jv^L)\}&\text{if } \jv^{r_0}=\jv^L
\end{cases}\\
b_h' &=\begin{cases}
2h&\text{if } \jv^{r_0}\neq\jv^L\\
2h+1&\text{if } \jv^{r_1}=\jv^L.
\end{cases}
\end{aligned}
$$
Then $\I(\ell_{m_1})$ is defined as in \eqref{indices} and $C(\iv)$ is defined as in \eqref{covering-0} for every $\iv\in\I(\ell_{m_1})\setminus\{(b_h,2h)\}_{h\in\N}$ and
$$
C_h((b_h,b_h)) = \begin{cases}
R^-((b_h,b_h)) & \text{if } h=0\\
R((b_h,b_h)) & \text{if } h>0,
\end{cases}
$$
see Fig.~\ref{fig:ricoprimenti-m1-iii-od}.

\begin{figure}[htbp]
\centering
\includegraphics[width=0.27\textwidth]{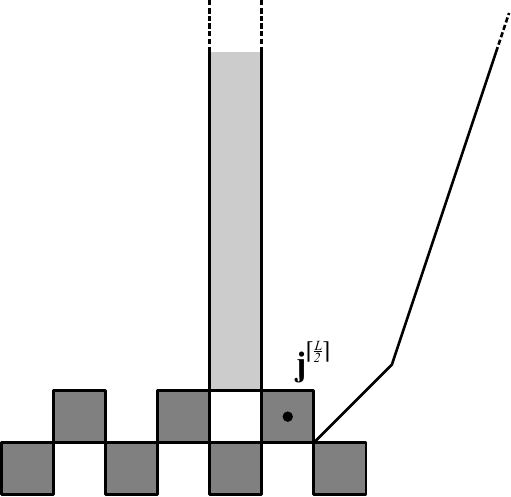}
\quad
\includegraphics[width=0.3\textwidth]{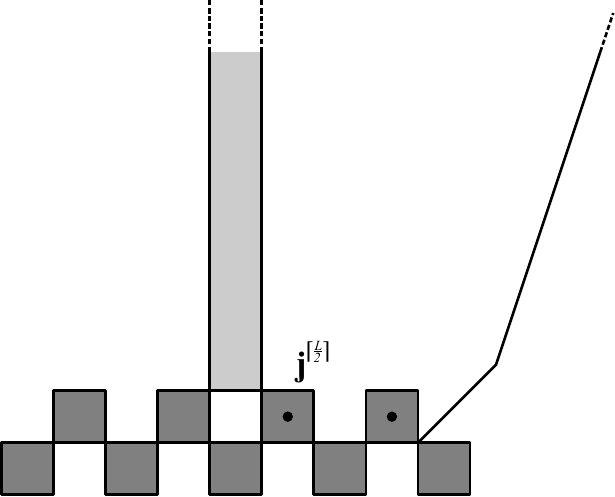}
\quad
\includegraphics[width=0.27\textwidth]{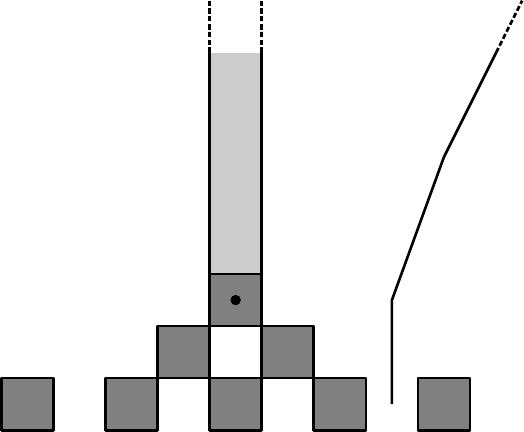}
\caption{The covering $\mathcal{S}_0$ in the cases listed in \eqref{axis-first-edge}.}
\label{fig:ricotrpimenti-top}
\end{figure}
{\bf Step 5: covering of the region outside $E$ projecting onto $\ell_0$.}
We define the set
\begin{equation}\label{axis-first-edge}
\mathcal{S}_0 = \begin{cases}
E(\{\iv\in\Z^2 \,:\, i_1=j_1^{\lceil\frac{L}{2}\rceil}-1,\, i_2\ge1\}) & \mbox{  if  }\,\, s(\ell_1)=0\,, \\
\emptyset & \displaystyle \mbox{  if  }\,\, 0<s(\ell_1)<\frac{1}{3}\,, \\
E(\{\iv\in\Z^2 \,:\, i_1=j_1^0,\, i_2\ge0\}) & \displaystyle \mbox{  if  }\,\, \frac{1}{3}<s(\ell_1)\le1\,.
\end{cases}
\end{equation}
(see Fig.~\ref{fig:ricotrpimenti-top}).

\begin{figure}[htbp]
\centering
\includegraphics{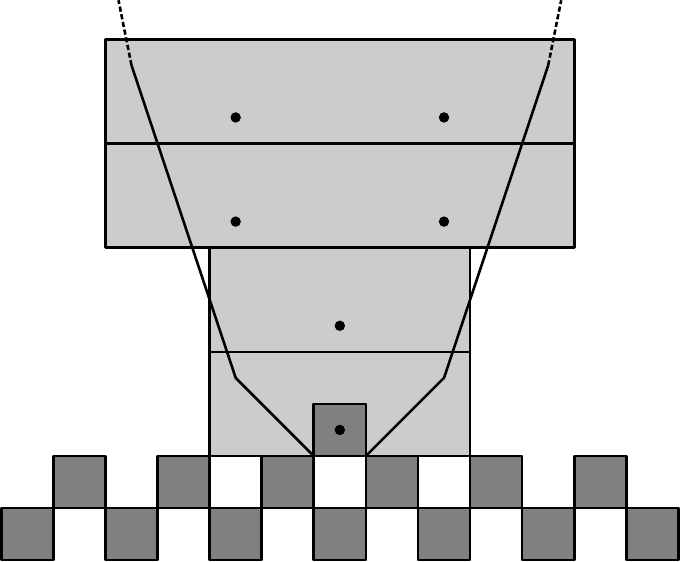}
\caption{Example of $C(\iv)$, $\iv\in\I(\ell_0)$.
The black dots represent the lattice points $(\pm b_h,2h).$}
\label{fig:ricotrpimenti-top-punta}
\end{figure}
If $\ell$ is such that $0<s(\ell)<\frac{1}{3}$,
we define $\I(\ell_0)=\{(0,2h)\}_{h\in\N}$ and for every $h\in\N$
$$
b_h = \max\{h'\in2\Z\,|\,(h'+1,2h)\in A(\jv^0)\},
$$
where $A(\ell_0)$ is as in \eqref{ext-edge-top0}.
Then, for every $\iv\in\I(\ell_0)$ we choose the tile 
\begin{equation}\label{axis-first-edge2}
C(\iv) = \bigcup\big\{q((k,2h))\cup q((k,2h+1)) \,:\, k\in\Z\,,\, -b_h-2\le k \le b_h+2\,,\, i_2=2h\big\},
\end{equation}
see Fig.~\ref{fig:ricotrpimenti-top-punta}.

{\bf Step 6: compatibility between different coverings.}
Here, we note that the family of sets $\{C(\iv)\,:\, \iv\in\I(\ell_m),\, 0\le m\le m_1,\, \iv\in\I(\ell_{m_1}')\}$ is a covering of $E(\{\iv\in\Z^2\,:\, \inf_{\jv\in Z(E)}\|\iv-\jv\|_1,\, i_2\ge i_1\})$, which is the region of plane ``outside'' the edges as in Step 1.
We point out that, if case (ii) of Step 4 does not hold, then $\I(\ell_{m_1}')=A(\ell_{m_1}')=\emptyset$.
Indeed, for every pair $\ell,\ell'\in\mathcal{E}(E)$ with $\ell'$ preceding $\ell$, the sets
$$\bigcup_{\iv\in\I(\ell')}C(\iv)
\quad \text{and} \quad
\bigcup_{\iv\in\I(\ell)}C(\iv)$$
are non-overlapping and their union does not leave uncovered regions.

\begin{figure}[htbp]
\centering
\includegraphics{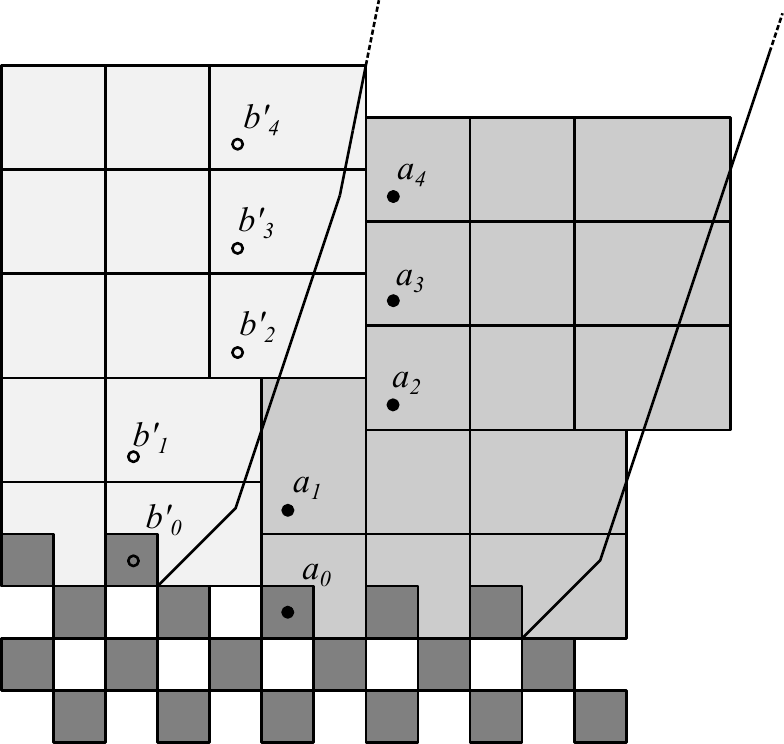}
\caption{Matching of the coverings outside a pair of adjacent discrete edges.}
\label{fig:compatibilita}
\end{figure}

We denote by $a_h, b_h$ and $a_h', b_h'$ the values defined in \eqref{extrema} corresponding to $\ell$ and $\ell'$, respectively.
We assume, for simplicity, that $\jv^L=(0,0)$.
Hence, every $\iv\in\I(\ell)$ and $\iv'\in\I(\ell')$ are such that $i_2=2h$ and $i'_2=2h+1+2h_0$, where $h_0=0$ if $0\le s(\ell)\le\frac{1}{3}$ and $2h_0=r_1-r_0$ otherwise, where $r_0$ and $r_1$ are defined in Step 3.
Therefore, in this coordinate system, the definition of $b_{h-h_0}'$ reads
$$
b_{h-h_0}'=\max\{h'\in2\Z+1 \,:\, (h'+1,2h+1)\in A(\ell')\}\,.
$$
Now, it is sufficient to note that, if $Q((a_h,2h))=R^{\rm ver}((a_h,2h))$ then $a_{h+1}=a_h+2$, while if $Q((a_h,2h))=Q((a_h,2h))$ then $a_{h+1}=a_h$, as it immediately follows from \eqref{covering-0} (see Fig.~\ref{fig:compatibilita}).

The covering of the regions projecting onto discrete edges $\ell\in\mathcal{E}(E)$ not fulfilling \eqref{edge-simpl} can be obtained symmetrically; we use the notation $\I(\ell)$ and $C(\iv)$ to denote the sets obtained symmetrically as in \eqref{indices} and \eqref{covering-0} respectively.
With $\mathcal{C}_0$ we denote the union of the set $\mathcal{S}_0$ defined in \eqref{axis-first-edge} and its symmetric analogs.

\begin{figure}[h]
\centering
\def\svgwidth{250pt}
\begingroup%
  \makeatletter%
  \providecommand\color[2][]{%
    \errmessage{(Inkscape) Color is used for the text in Inkscape, but the package 'color.sty' is not loaded}%
    \renewcommand\color[2][]{}%
  }%
  \providecommand\transparent[1]{%
    \errmessage{(Inkscape) Transparency is used (non-zero) for the text in Inkscape, but the package 'transparent.sty' is not loaded}%
    \renewcommand\transparent[1]{}%
  }%
  \providecommand\rotatebox[2]{#2}%
  \newcommand*\fsize{\dimexpr\f@size pt\relax}%
  \newcommand*\lineheight[1]{\fontsize{\fsize}{#1\fsize}\selectfont}%
  \ifx\svgwidth\undefined%
    \setlength{\unitlength}{400.93177135bp}%
    \ifx\svgscale\undefined%
      \relax%
    \else%
      \setlength{\unitlength}{\unitlength * \real{\svgscale}}%
    \fi%
  \else%
    \setlength{\unitlength}{\svgwidth}%
  \fi%
  \global\let\svgwidth\undefined%
  \global\let\svgscale\undefined%
  \makeatother%
  \begin{picture}(1,0.27416915)%
    \lineheight{1}%
    \setlength\tabcolsep{0pt}%
    \put(0,0){\includegraphics[width=\unitlength,page=1]{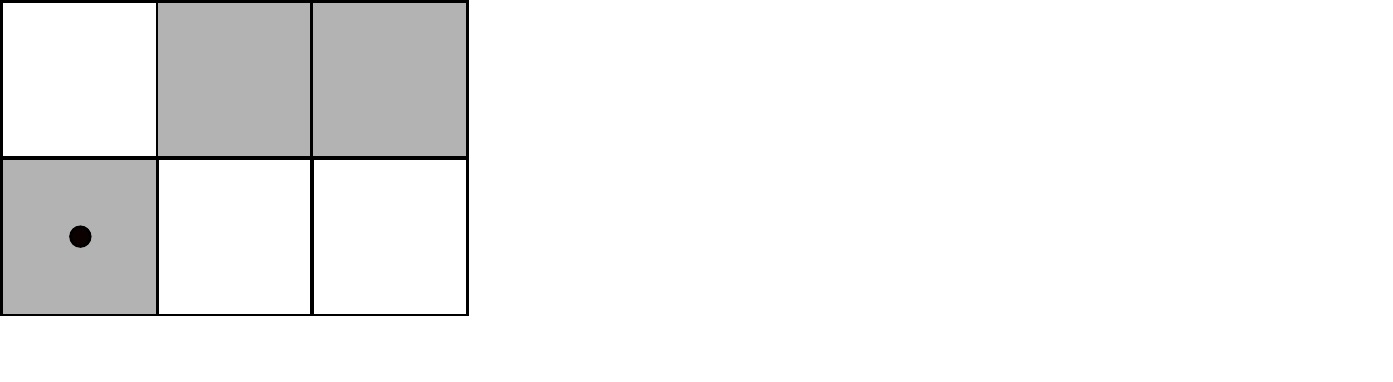}}%
    \put(0.08287961,0.00366089){\color[rgb]{0,0,0}\makebox(0,0)[lt]{\lineheight{1.25}\smash{\begin{tabular}[t]{l}$R^{\rm hor}(\iv)$\end{tabular}}}}%
    \put(0,0){\includegraphics[width=\unitlength,page=2]{retthorconf.pdf}}%
    \put(0.08306953,0.08528548){\color[rgb]{0,0,0}\makebox(0,0)[lt]{\lineheight{1.25}\smash{\begin{tabular}[t]{l}$\iv$\end{tabular}}}}%
  \end{picture}%
\endgroup%

\caption{The checkerboard configurations are energetically favorable inside each $R^{\rm hor}(\iv)$.}\label{fig:retthorconf}
\end{figure}
{\bf Step 7: local minimum problems on $C(\iv)$.}
As a next step, we prove that the configuration with minimal energy inside each tile $C(\iv)$ is the even checkerboard, for any $\iv\in\I(\ell)$, $\ell\in\mathcal{E}(E)$; \emph{i.e.},
\begin{equation}\label{local-min}
\mathcal{F}_\alpha^\varphi(E(\Ze)\cap C(\iv), E) \le \mathcal{F}_\alpha^\varphi(F\cap C(\iv), E)
\end{equation}
for every $F\in\mathcal{D}$, and the same for $\mathcal{C}_0$; \emph{i.e.},
\begin{equation}\label{local-min-croce}
\mathcal{F}_\alpha^\varphi(E(\Ze)\cap\mathcal{C}_0, E) \le \mathcal{F}_\alpha^\varphi(F\cap \mathcal{C}_0, E).
\end{equation}

Indeed, if $C(\iv)=Q(\iv)$ from Remark \ref{BS-proj} either \eqref{eq:intersection-flat} or \eqref{eq:intersection-slant} holds.
Hence, by arguing as in the proof of Proposition \ref{firststep}, from \eqref{normestimate} we get \eqref{local-min}.

If $C_h(\iv)=R^{\rm hor}(\iv)$, we can restrict the minimization in \eqref{local-min} to the checkerboard configurations.
Indeed, if $\jv\in Z(R^{\rm hor}(\iv))$ has a nearest neighbor $\jv'$ then by suitably shifting one of them towards an ``empty'' location the corresponding variation of the energy is at most $-2+\alpha<0$ (see Fig.~\ref{fig:retthorconf}); the case $\alpha>2$ is trivial.
Moreover, by the definition of $b_h$ we have that either \eqref{eq:intersection-flat} or \eqref{eq:intersection-slant} is satisfied, thence from Remark \ref{rectremk}, \eqref{rect} holds yielding \eqref{local-min}.
The cases of $C(\iv)=R^{\rm ver}(\iv)$, $C(\iv)=R^+(\iv)$ and $C(\iv)=R^-(\iv)$ can be treated analogously.

\begin{figure}[htbp]
\centering
\includegraphics{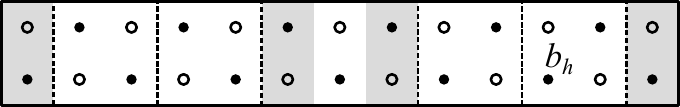}
\caption{The lattice points involved in \eqref{energy-top}.
The black dots are points of $\Ze$, circles are points of $\Zo$.
The energy contribution of the even checkerboard in the white regions is negative.}
\label{fig:energia-top}
\end{figure}
Now consider the case $C(\iv)$ as in \eqref{axis-first-edge2} with $\iv\in \I(\ell_0)$.
Reasoning as above, we can reduce minimum problem \eqref{local-min} to a comparison between the energies of the two checkerboards.
Then, for every $h\in\N$, $k\in2\Z$ with $0<|k|\le b_h$, the even checkerboard has minimum energy in $Q((k,2h))$, as above.
Hence \eqref{local-min} is proved if
\begin{multline*}
\varphi(0,2h)+2\varphi(1,2h+1)+2d^\varphi((b_h+2,2h),E) \\
\le \varphi(0,2h+1)+2\varphi(1,2h)+2d^\varphi((b_h+2,2h+1),E)\,;
\end{multline*}
that is,
\begin{equation}\label{energy-top}
\varphi(1,2h+1)+d^\varphi((b_h+2,2h),E)\le \frac{1}{2}+\varphi(1,2h)+d^\varphi((b_h+2,2h+1),E),
\end{equation}
see Fig.~\ref{fig:energia-top}.
If $(b_h+2,2h+1)\not\in A(\ell_0)$ the inequality above is trivial, since $d^\varphi((b_h+2,2h),E)\le\varphi(1,2h)$ and $d^\varphi((b_h+2,2h),E)\le\varphi(1,2h+1)$.
If, instead, $(b_h+2,2h+1)\in A(\ell_0)$ \eqref{energy-top} reduces to
$$
\varphi(1,2h+1)+\varphi(b_h+2,2h)\le \frac{1}{2}+\varphi(1,2h)+\varphi(b_h+2,2h+1),
$$
which holds from \eqref{normestimate} and {\ref{norm-derivative}}.

Reasoning as in point (c) of the proof of Proposition \ref{firststep} there holds
$$
\mathcal{F}_\alpha^\varphi(E(\Ze)\cap\mathcal{C}_0,E) \le \mathcal{F}_\alpha^\varphi(F\cap\mathcal{C}_0,E)\,.
$$

As a final remark, we note that for every $\iv\in E(\Ze)$ such that $d^\varphi(\iv,E)>\frac{4}{\alpha}$ the variation of removing $q(\iv)$ is negative, hence
$$
\argmin{E'\supset E}\mathcal{F}_\alpha^\varphi(E',E) \subset E\Big(\Big\{\iv\in\Z^2 \,:\, d^\varphi(\iv,E)<\frac{4}{\alpha}\Big\}\Big)\,.
$$

{\bf Step 8: conclusion.}
Set
$$
\I:=\Big(\bigcup_{\ell\in\mathcal{E}(E)}\I(\ell)\Big)\cap \Big\{\iv\in\Z^2 \,:\, d^\varphi(\iv,E)<\frac{4}{\alpha}\Big\}\,.
$$
An analogous argument as that in the proof of Proposition~\ref{firststep} (see \eqref{subaddmin}) shows that
$$
\mathcal{F}_\alpha^\varphi(E',E)\ge\sum_{\iv\in\I}\mathcal{F}_\alpha^\varphi(E'\cap C(\iv),E)+\mathcal{F}_\alpha^\varphi(E'\cap\mathcal{C}_0,E)
$$
for every $E'\supset E$, $E'\in\D$.
By virtue of Step 7 we get
$$
\min_{E'\supset E}\mathcal{F}_\alpha^\varphi(E',E)\ge\sum_{\iv\in\I}\mathcal{F}_\alpha^\varphi(E(\Ze)\cap C(\iv), E)+\mathcal{F}_\alpha^\varphi(E(\Ze)\cap\mathcal{C}_0,E)=\mathcal{F}_\alpha^\varphi(E(\I\cup Z(E)),E)
$$
which implies that the ground state of the energy is achieved by the even checkerboard configuration.
Lastly, the monotonicity constraint yields the uniqueness of the solution.
\end{proof}

%

We will apply Proposition~\ref{steps} iteratively to each $E=E_\alpha^k$, $k\geq1$  in order to characterize the solutions of the recursive scheme \eqref{MM-scheme2} (see Theorem~\ref{thm:nucleation}).
Indeed, as shown with Proposition~\ref{firststep}, the first step $E_\alpha^1$ coincides with $E(B_\frac{4}{\alpha}^\varphi\cap \Ze)$ which satisfies the symmetry conditions \eqref{E-symm} and, thanks to the following Lemma, the non-degeneracy condition \eqref{non-degeneracy}.

{
\begin{lemma}\label{non-deg-lem}
If $\varphi$ satisfies {\rm\ref{ass-H1}} and {\rm\ref{ass-H2}}, then for every $r>2$ the set $E=B_r^\varphi\cap\Ze$ satisfies \eqref{non-degeneracy}.
\end{lemma}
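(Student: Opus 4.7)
My plan is to verify \eqref{non-degeneracy} for every $\iv \in \partial^{\rm eff} E$ by combining the symmetries, convexity, and monotonicity of $B_r^\varphi$ with the hypothesis $r > 2$. By the invariance of $B_r^\varphi$ under the dihedral group generated by the axis reflections (since $\varphi$ is absolute in this section) and by $(x_1,x_2)\mapsto(x_2,x_1)$ (assumption \ref{ass-H1}), I reduce to considering $\iv=(i_1,i_2)\in\Ze$ with $i_1\ge i_2\ge 0$. I rule out the origin at once: by \ref{ass-H2}, $\varphi(\pm 2,0)=\varphi(0,\pm 2)=2<r$, so all four axis neighbors of the origin lie in $B_r^\varphi$; convexity of $\varphi$ then gives $\varphi(\pm 1,\pm 1)\le\tfrac{1}{2}(\varphi(2,0)+\varphi(0,2))=2<r$, so no diagonal neighbor of $(0,0)$ lies outside $E$ and therefore $(0,0)\notin\partial^{\rm eff}E$.

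For a generic $\iv\ne(0,0)$ in the closed first octant, the plan is to show that the set of offsets $\Delta$ in $N:=\{(0,\pm 2),(\pm 2,0),(\pm 1,\pm 1)\}$ for which $\iv+\Delta\in E$ forms a contiguous arc in the cyclic order of $N$, centered about the direction towards the origin. The monotonicity \eqref{eq:monotonic} of the absolute norm $\varphi$ forces all inward-facing offsets---those whose target has componentwise-smaller absolute values than $\iv$---to satisfy $\varphi(\iv+\Delta)\le\varphi(\iv)<r$, so the corresponding neighbors lie in $E$. When $\iv$ lies on the positive $x_1$-axis ($i_2=0$) and monotonicity alone does not cover the oblique offsets $(-1,\pm 1)$, I would use the convex-combination identities $(i_1-1,1)=\tfrac{1}{2}[(i_1,0)+(i_1-2,2)]$ and $(i_1-2,2)=\tfrac{i_1-2}{i_1}(i_1,0)+\tfrac{2}{i_1}(0,i_1)$, combined with \ref{ass-H2}, to obtain $\varphi(i_1-1,\pm 1)\le i_1=\varphi(\iv)$; an analogous bound handles the diagonal case $i_1=i_2$ by permutation symmetry. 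The contiguity of the in-$E$ arc inside $N$ then follows from convexity of $B_r^\varphi$: for every consecutive ``diagonal-axis-diagonal'' triple (e.g., $(0,2),(1,1),(2,0)$) the central offset is the midpoint of the outer two, so convexity forbids a gap; and for every ``axis-diagonal-axis'' triple the contiguity on the inward side is supplied by the monotonicity/convex-combination bound just described.

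With this contiguous-arc structure, $\mP:=\conv(\I)$ is a simple polygon having $\iv$ as a boundary vertex (the diagonal neighbor missing from $E$ lies opposite the arc), and the collection of segments $[\jv_1,\jv_2]$ with $|\jv_1-\jv_2|\le 2$ and $\jv_1,\jv_2\in\I$ forms the 1-skeleton of a triangulation of $\mP$; any two chords meeting in the interior (for instance $[\iv,\iv+(-2,0)]$ and $[\iv+(-1,1),\iv+(-1,-1)]$, crossing at $\iv+(-1,0)$) are absorbed as implicit Steiner vertices of the subdivision. The discrete vertices $\iv^-,\iv^+$ are then defined as the vertices of $\partial\mP$ respectively preceding and following $\iv$ in the clockwise orientation. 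The main obstacle will be the case analysis: although the guiding principle is uniform, a careful verification is required in the sub-cases $i_2=0$ (axis), $i_1=i_2$ (diagonal), and $i_1>i_2>0$ (octant interior); the axis case is the most delicate, since it relies on the convex-combination bound above to secure the oblique inward offsets, and in each sub-case one must separately check that $\iv$ genuinely remains a vertex of $\partial\mP$ even when the inward arc produces a flat boundary segment through $\iv$.
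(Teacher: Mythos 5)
Your overall strategy (reduction to one octant, then monotonicity and convexity of the symmetric absolute ball) is the same as the paper's, and your treatment of the origin, of the inward offsets, and of the axis points is fine. The genuine gap is in the contiguity step, which is the heart of the argument. The two mechanisms you invoke do not establish it: monotonicity only yields the offsets $(-2,0),(-1,-1),(0,-2)$ (for $i_1\ge i_2\ge1$), and midpoint convexity gives implications in the wrong direction --- it shows that if \emph{both} $\iv+(2,0)$ and $\iv+(0,2)$ lie in $B_r^\varphi$ then so does $\iv+(1,1)$, whereas the pattern threatening contiguity is the reverse one, $\iv+(1,1)\in B_r^\varphi$ with both $\iv+(2,0)$ and $\iv+(0,2)$ outside (and similarly $\iv+(-1,1)$ present with $\iv+(0,2)$ absent), which neither tool excludes. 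In fact, for a generic $\iv$ in the octant the contiguity claim is \emph{false} unless one uses the hypothesis $\iv\in\partial^{\rm eff}E$, which your argument never invokes: take $\varphi$ the Euclidean norm, $r=4.3$ and $\iv=(2,2)$; then the neighbors of $\iv$ lying in $E$ are exactly those with offsets $(\pm1,\pm1),(-2,0),(0,-2)$, while $(2,0)$ and $(0,2)$ are missing, so the in-$E$ set has two separate gaps. (This $\iv$ has all four diagonal neighbors in $E$ and hence is not in $\partial^{\rm eff}E$, so the Lemma itself is not contradicted; but it shows your intermediate claim cannot be proved by the tools you cite.)

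To close the gap you need the bisector-reflection estimate $\varphi(a-1,b+1)\le\varphi(a,b)$ for $a>b\ge0$ (write $(a-1,b+1)$ as a convex combination of $(a,b)$ and $(b,a)$ and use {\ref{ass-H1}}) applied not only on the coordinate axis, where you do use precisely this identity, but also at the \emph{outward} neighbors of $\iv$: e.g.\ $\varphi(i_1,i_2+2)\le\varphi(i_1+1,i_2+1)$ for $i_1>i_2$, so that the presence of the offset $(1,1)$ forces that of $(0,2)$. Alternatively, use $\iv\in\partial^{\rm eff}E$ directly: since $(-1,-1)$ and, by the reflection estimate, $(-1,1)$ are present (in the open octant), some diagonal offset among $(1,\pm1)$ must be absent; monotonicity applied at the neighbors ($\varphi(i_1+1,|i_2-1|)\le\varphi(i_1+1,i_2+1)$ and $\le\varphi(i_1+2,i_2)$) then forces $(1,1)$ to be absent, after which midpoint convexity does give contiguity. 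Either repair is in substance what the paper's proof does: for each neighbor $\jv\in Z(E)$ of $\iv$ it puts the whole dihedral orbit of $\jv$ in $Z(E)$ and uses convexity of $B_r^\varphi$ to pull in the lattice points lying between $\jv$ and the bisector.
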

\begin{proof}
By the symmetric assumption {\ref{ass-H1}} we can restrict our analysis to points $\iv\in\partial^{\rm eff}E$ with $i_2\ge i_1\ge0$.
We subdivide the proof into two cases.
If $i_1=0$, then $i_2>0$ from {\ref{ass-H2}} and the condition $r>2$.
Since $(0,0)\in Z(E)$ we have that $(0,i_2-2)\in Z(E)$.
By {\ref{ass-H1}} the point $(\pm i_2,0)\in Z(E)$ then, by the $\Ze$-convexity of $E$ we get that $(\pm1,i_2-1)\in Z(E)$.
Since for every $\iv'$ with $i_2'>i_2$, $\iv'\not\in Z(E)$ thus $\iv$ is non-degenerate.
If, instead, $i_1>0$, for every $\jv\in Z(E)$ such that $\|\jv-\iv\|_1\le 2$, by the symmetry with respect to the coordinate axes of $\varphi$ we get that $(-j_1,j_2)$, $(j_1,-j_2)\in Z(E)$.
The $\Ze$-convexity and the fact that $(0,0)\in Z(E)$ yield that $(j_1-2,j_2)$, $(j_1,j_2-2)$, $(j_1+1,j_2-2)\in Z(E)$.
This implies that $\iv$ is non-degenerate.
\end{proof}
}

\begin{figure}[t]
\begin{minipage}{0.3\linewidth}
\begin{center}
\includegraphics[width=0.6\textwidth]{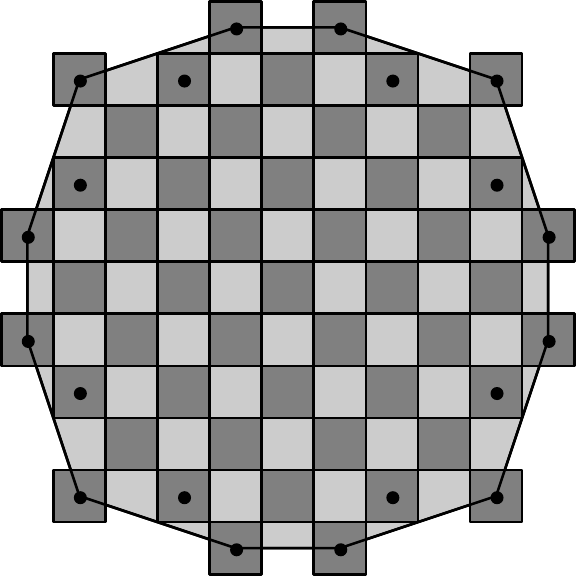}
\end{center}
\end{minipage}
\begin{minipage}{0.32\linewidth}
\begin{center}
\includegraphics[width=\textwidth]{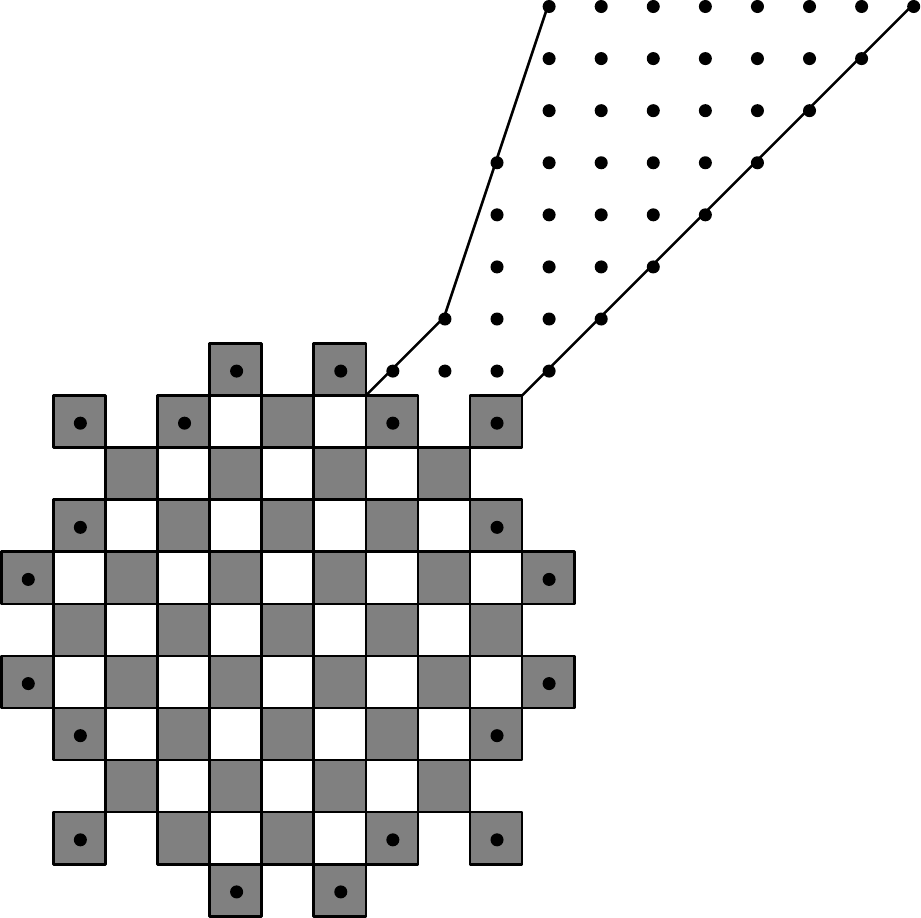}
\end{center}
\end{minipage}
\quad
\begin{minipage}{0.32\linewidth}
\begin{center}
\includegraphics[width=\textwidth]{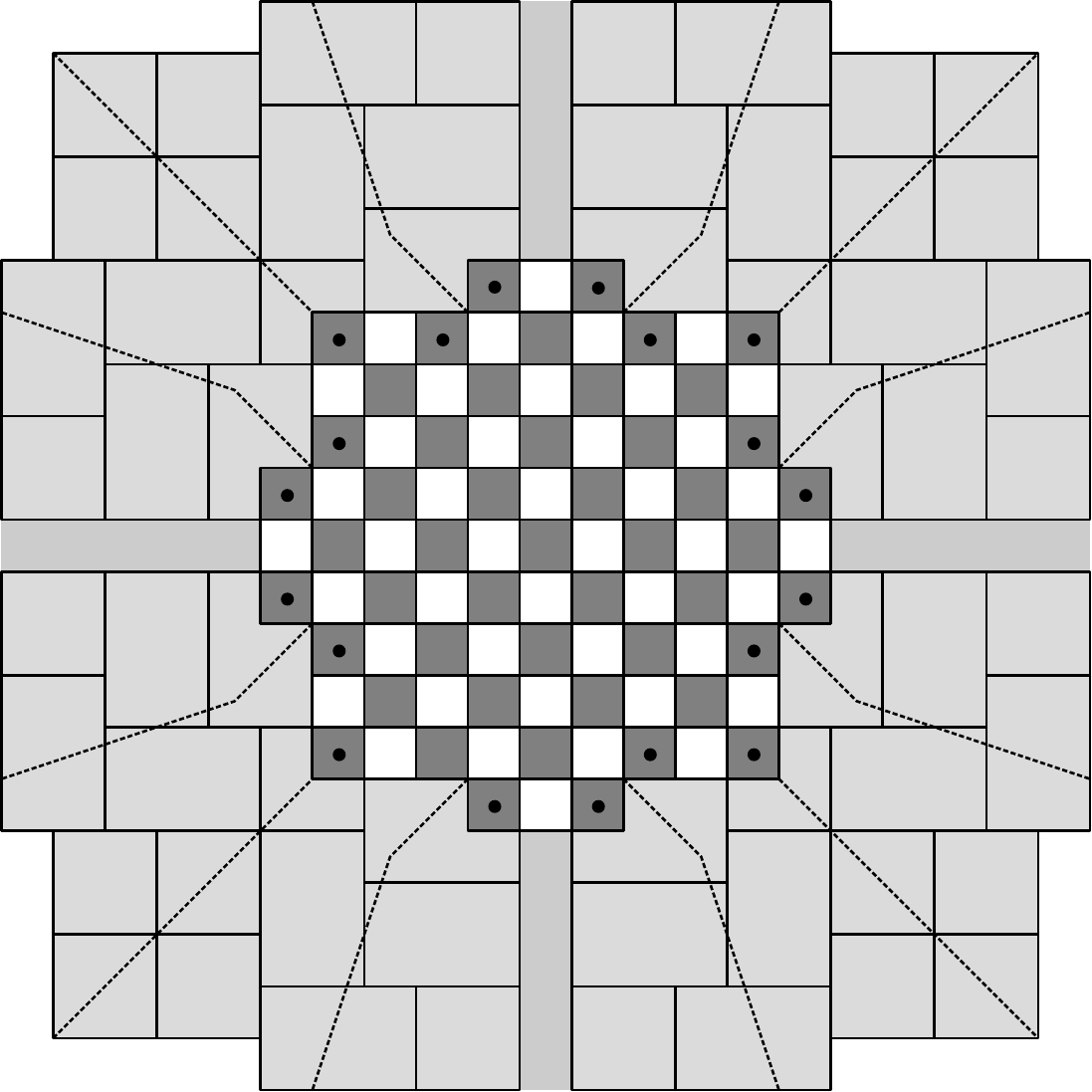}
\end{center}
\end{minipage}
\caption{From the left, the set $E^1_\alpha$ and the polygon $\conv(Z(E^1_\alpha))$,
the lattice sets $A(\ell)$, $\ell\in\mathcal{E}(E)$ and, lastly, the corresponding covering.}
\label{fig:ex-tech-ass}
\end{figure}

We conclude this section with some examples clarifying the role of compatibility assumption \eqref{monotone-edges} and non-degeneracy condition {\ref{ass-H2}}.

\begin{example}\label{ex:tech-ass}

Consider $\varphi$ the Euclidean norm and set $\alpha=0.7$. Then the resulting $E^1_\alpha$ complies with \eqref{monotone-edges} and the lattice sets $A(\ell)$ fulfill \eqref{tech-ass}, as it can be noted in Fig.~\ref{fig:ex-tech-ass}.

\begin{figure}[h]
\centering
\includegraphics[width=0.9\textwidth]{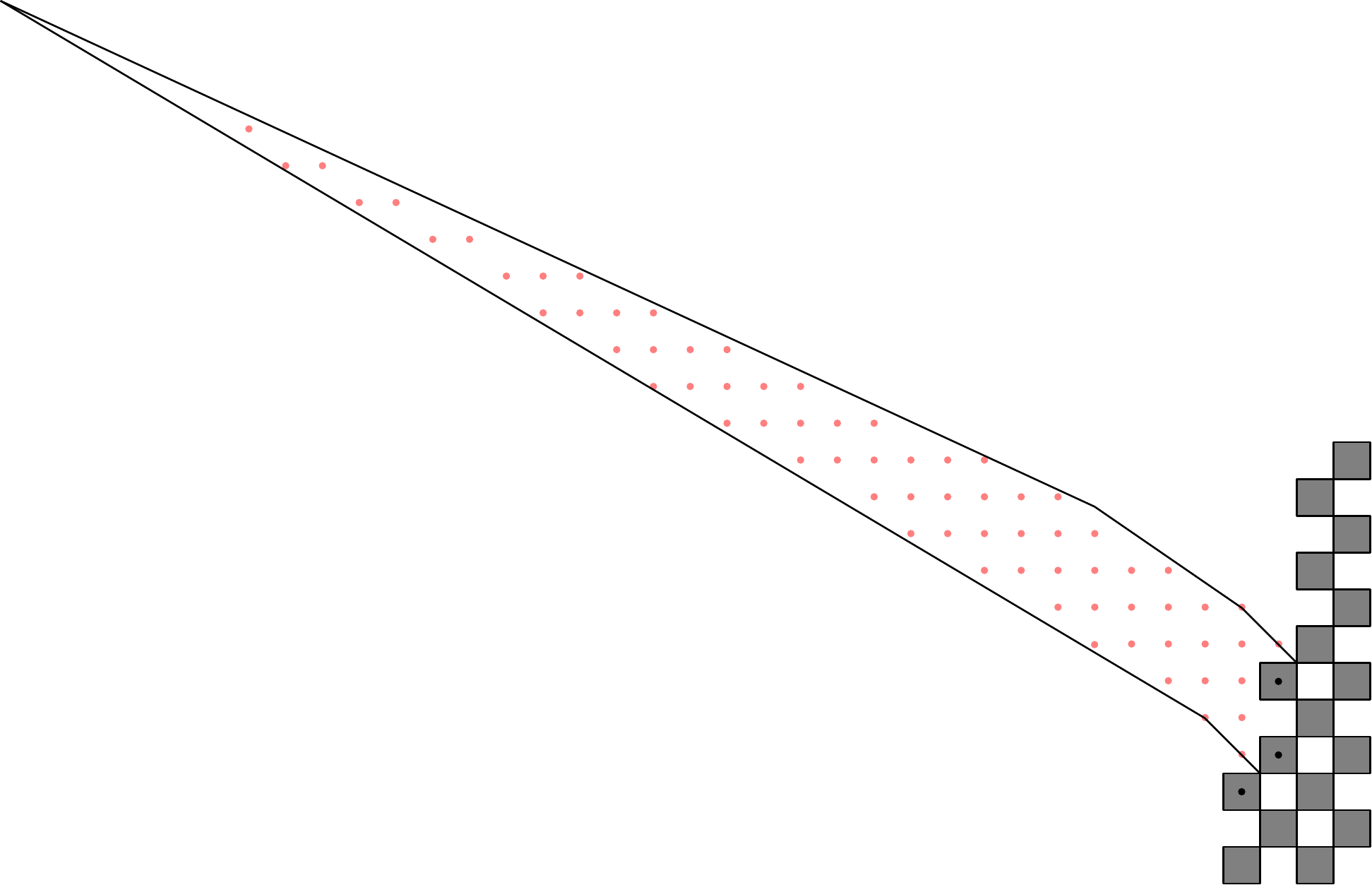}
\caption{The discrete edge $\ell$ represented with black dots does not satisfy the compatibility condition \eqref{monotone-edges}.
The red dots denote the points of $A(\ell)$ which does not comply with \eqref{tech-ass}.}
\label{fig:counterexample}
\end{figure}
If we choose instead $\varphi=\|\cdot\|_3$ and $\alpha=0.71$, the compatibility condition \eqref{monotone-edges} is violated for $E^1_\alpha$ as shown in Fig.~\ref{fig:counterexample}.
This also provides an example in which \eqref{tech-ass} is not satisfied, hence the indices $a_h$ and $b_h$ introduced in Step 2 of Proposition \ref{steps} are not well defined.
\end{example}

\begin{figure}[htp]
\begin{minipage}{0.4\linewidth}
\begin{center}
\includegraphics{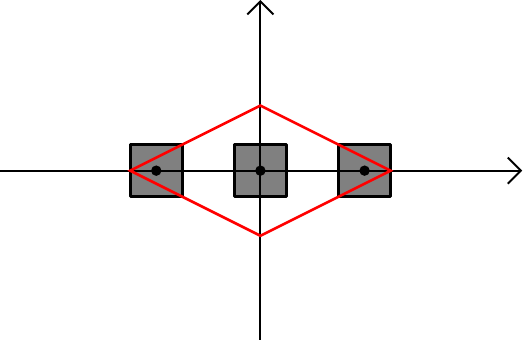}
\end{center}
\end{minipage}
\begin{minipage}{0.6\linewidth}
\begin{center}
\includegraphics{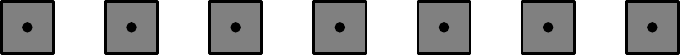}
\end{center}
\end{minipage}
\caption{On the left the set $E_\alpha^1$ and the boundary of $B_\frac{4}{\alpha}^\varphi$ in red.
On the right the discrete solution $E_\alpha^k$ after two steps.}
\label{fig:1D-ex}
\end{figure}

\begin{example}[one-dimensional motion]\label{1D-ex}
We consider an absolute norm which does not satisfy the normalization assumption $\varphi(1,0)=\varphi(0,1)=1$; that is,
$$
\varphi(\x)=|x_1|+2|x_2|, \quad \text{for every } \x\in\R^2,
$$
and take $\frac{4}{3}<\alpha<2$.
Then Proposition \ref{firststep} applies in this case and gives
$$
E_\alpha^1=q((-2,0))\cup q((0,0))\cup q((2,0))\,.
$$
Even though Proposition \ref{steps} cannot be applied on such set, it is straightforward to see in a direct way that the solution of \ref{MM-scheme2} $\{E_\alpha^k\}$ is given by
$$
E_\alpha^k=\bigcup_{h=0}^k \big(q((-2h,0))\cup q((2h,0))\big)\,,\,\,k\geq0\,,
$$
see Fig.~\ref{fig:1D-ex}.
The resulting minimizing movement will be the family of horizontal line segments
$$
E(t) =\lim_{\e\to0} E_{\e,\tau}(t) = \lim_{\e\to0} \e E_\alpha^{\lfloor\frac{t}{\tau}\rfloor} = [-2\alpha t,2\alpha t]\times\{0\}\,,\,\,t\geq0\,.
$$
\end{example}

\subsection{Nucleation and growth of a set}\label{sec:nucleation}
By virtue of Proposition~\ref{steps}, we can characterize the time-discrete flow $\{E_\alpha^k\}_{k\geq0}$ solution of \eqref{MM-scheme2}.
This evolution admits an alternative interpretation, based on a geometric iterative process that we will call \emph{nucleation of the initial set}.
Indeed, the set of centers of the $k$-th step of the discrete evolution $Z(E_\alpha^k)$ can be obtained from that of the previous step $Z(E_\alpha^{k-1})$ by adding (in the Minkowski sense) the nucleus $\mN_\alpha^\varphi$ (see Definition \ref{nucleus-def}); \emph{i.e.}, a lattice set that characterizes the motion.

\begin{theorem}\label{thm:nucleation}
Let $\varphi$ be a symmetric absolute normalized norm satisfying {\rm\ref{norm-derivative}}, and let $\alpha>0$ be such that $\alpha\not\in\Lambda^\varphi$.
If $E(\mN_\alpha^\varphi)$ satisfies assumption \eqref{monotone-edges} then there exists a unique discrete solution $\{E_\alpha^k\}$ of \eqref{MM-scheme2}  which is given, for any $k\ge1$, by
\begin{equation}\label{nucleation1}
Z(E^k_\alpha) = \underbrace{\mN_\alpha^\varphi + \dots + \mN_\alpha^\varphi}_{k\text{-times}} \,.
\end{equation}
In particular, $E^k_\alpha\in\A^e_{\rm conv}$ for every $k\ge1$.
\end{theorem}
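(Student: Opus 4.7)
The plan is to proceed by induction on $k$, applying Proposition \ref{steps} at each step with $E = E_\alpha^{k-1}$, and recognising the set of centers produced in \eqref{even-steps} as a Minkowski sum with the nucleus. The base case $k=1$ is exactly Proposition \ref{firststep}. For the inductive step, suppose $Z(E_\alpha^{k-1}) = \mN_\alpha^\varphi[k-1]$ and that $E_\alpha^{k-1} \in \A^e_{\rm conv}$ satisfies \eqref{non-degeneracy}, \eqref{monotone-edges} and \eqref{E-symm}. Then Proposition \ref{steps} yields uniqueness of $E_\alpha^k$ and
\[ Z(E_\alpha^k) = \Big\{\iv \in \Ze \,:\, d^\varphi(\iv, E_\alpha^{k-1}) < \frac{4}{\alpha}\Big\}. \]

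I would then establish $Z(E_\alpha^k) = Z(E_\alpha^{k-1}) + \mN_\alpha^\varphi$ by a direct double inclusion. If $\iv = \jv + \nv$ with $\jv \in Z(E_\alpha^{k-1})$ and $\nv \in \mN_\alpha^\varphi = B_{4/\alpha}^\varphi \cap \Ze$, the sublattice property of $\Ze$ forces $\iv \in \Ze$, while $\varphi(\iv - \jv) = \varphi(\nv) < 4/\alpha$. Conversely, for any $\iv$ in the right-hand side of the displayed equality above, one picks $\jv \in Z(E_\alpha^{k-1})$ realising the distance and sets $\nv := \iv - \jv$, which lies in $\Ze \cap B_{4/\alpha}^\varphi = \mN_\alpha^\varphi$. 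To close the induction, I have to verify that $E_\alpha^k$ itself satisfies the structural hypotheses of Proposition \ref{steps}: symmetry \eqref{E-symm} is inherited from the invariance of $\mN_\alpha^\varphi$ under reflections across the axes and the bisectors $x_2 = \pm x_1$ (a consequence of \ref{ass-H1} and the absoluteness of $\varphi$), combined with the compatibility of Minkowski sum with isometries; membership in $\A^e_{\rm conv}$ comes from Proposition \ref{prop:iden} applied with $\Lambda = \Ze$, which yields $\mN_\alpha^\varphi[k] = (k\mathcal{Q}) \cap \Ze$ for $\mathcal{Q} := \conv(\mN_\alpha^\varphi)$; and \eqref{monotone-edges} follows because, by Proposition \ref{min-sum-edges}, the outer-normal set of $k\mathcal{Q}$ coincides with that of $\mathcal{Q}$, so the clockwise ordering of normals of the discrete edges of $E_\alpha^k$ is inherited from that of $E_\alpha^1 = E(\mN_\alpha^\varphi)$.

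The main obstacle I expect is the propagation of the non-degeneracy condition \eqref{non-degeneracy} together with the precise discrete-edge patterns of Remark \ref{flat-slant-edge} under Minkowski summation. While Proposition \ref{min-sum-edges} tracks the set of normal directions, one must still verify that for every discrete edge $\ell$ of $E_\alpha^k$ the boundary points of $\partial^{\rm eff} E_\alpha^k$ arrange themselves according to one of the four slope patterns of Remark \ref{flat-slant-edge}, so that no spurious ``steps'' appear along edges as $k$ increases. This should be handled by a case-by-case inspection in the four slope regimes, combined with Lemma \ref{non-deg-lem} applied to $E_\alpha^1$ and the observation that subsequent iterates only dilate $\mathcal{Q}$ without introducing new normal directions; Pick's formula \eqref{counting} may also be useful to count boundary points along each edge of $k\mathcal{Q}$.
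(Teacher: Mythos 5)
Your proposal is correct and follows essentially the same route as the paper: induction on $k$ using Proposition~\ref{steps}, the double inclusion identifying \eqref{even-steps} with the Minkowski sum $Z(E_\alpha^{k-1})+\mN_\alpha^\varphi$, and the propagation of the structural hypotheses via Proposition~\ref{prop:iden} (for $\Ze$-convexity) and Proposition~\ref{min-sum-edges} (for the preservation of the outer normals, hence of \eqref{monotone-edges}). The ``main obstacle'' you flag is resolved in the paper exactly along the lines you sketch, by observing that \eqref{monotone-edges} is equivalent to all discrete vertices lying on $\partial\conv(\I)$, so that each edge of $k\mathcal{Q}$ carries a chain of discrete edges with the same normal and $\Ze$-convexity then gives the needed structure.
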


\begin{proof}
We first note that, for a lattice set $\I$ such that $E(\I)$ belongs to $\mathcal{A}^e_{\rm conv}$ and satisfies \eqref{monotone-edges}
\begin{equation}
\mbox{$E(\underbrace{\I+\I+\dots+\I}_{m\text{-times}})\in\A^e_{\conv}$ still satisfies \eqref{monotone-edges}, for every $m\in\N$.}
\label{rem:minkowski}
\end{equation}
{It will suffice to show \eqref{rem:minkowski} for $m=2$, as the claim for $m\geq3$ will follow by an induction argument on the number $m$ of the summands. Setting $\mathcal{Q}:={\rm conv}(\I)$, property \eqref{iden} with $\Lambda=\Ze$ and $m=2$ reads as $(\mathcal{Q}\cap\Ze)+(\mathcal{Q}\cap\Ze)=2\mathcal{Q}\cap\Ze$, yielding that $E(\I+\I)\in\A^e_{\rm conv}$.
Moreover, a property equivalent to \eqref{monotone-edges} is that all the discrete vertices of $E(\I)$ belongs to $\partial\mathcal{Q}$.
This implies that the set of outward unit normal vectors of $\mathcal{Q}$ coincide with the set of (discrete) outward unit normal vectors of $E(\I)$.
In particular, every edge $l$ of $\mathcal{Q}$ identifies a finite chain of discrete edges of $E(\I)$ having the same unit normal vector $\nu(l)$.
This fact depends only on $\nu(l)$ and not on the length of $l$.
Proposition~\ref{min-sum-edges} with $A=B=\mathcal{Q}$ implies that the set of outward unit normal vectors of $2\mathcal{Q}$ coincide with that of $\mathcal{Q}$.
Hence, the edge $l+l$ of $2\mathcal{Q}$ corresponds to a chain of a finite number of discrete edges of $E(\I+\I)$ having the same unit normal vector $\nu(l)$.
The $\Ze$-convexity of $E(\I+\I)$ implies \eqref{monotone-edges}.}

Going back to the proof of \eqref{nucleation1}, we argue by induction on the step $k$.
By Proposition \ref{firststep} and Lemma \ref{non-deg-lem} $Z(E^1_\alpha)=\mN_\alpha^\varphi$ complies with all the assumptions on $E$ of Proposition \ref{steps}. 
Now, let $k\geq2$ and assume that 
$$
Z(E^{k-1}_\alpha) = \underbrace{\mN_\alpha^\varphi + \dots + \mN_\alpha^\varphi}_{(k-1)\text{-times}} \,.
$$
For what remarked in \eqref{rem:minkowski}, all the hypotheses of Proposition \ref{steps} are satisfied.
Then, taking into account \eqref{even-steps} with $E=E_\alpha^{k-1}$, we have that
\begin{equation}
Z(E^k_\alpha)=Z(E^{k-1}_\alpha)+\mN_\alpha^\varphi\,.
\label{eq:minkowskik}
\end{equation}
Indeed, setting $\I_k:=Z(E^{k-1}_\alpha)+\mN_\alpha^\varphi$, we have
\begin{equation*}
\max\{d^\varphi(\iv,\jv) \,:\, \iv\in\I_k,\, \jv\in Z(E^{k-1}_\alpha)\}\leq\max_{\iv\in\mN_\alpha^\varphi}\varphi(\iv)<\frac{4}{\alpha},
\end{equation*}
and this shows that $\mathcal{I}_k\subseteq Z(E^k_\alpha)$.
On the other hand, if $\iv\in Z(E^k_\alpha)$, there exist $\iv'\in Z(E^{k-1}_\alpha)$ and $\iv''\in\mN_\alpha^\varphi$ such that $\iv=\iv'+\iv''$.
This comes by noting that by \eqref{even-steps} there exists $\iv'\in Z(E_\alpha^{k-1})$ such that $\varphi(\iv-\iv')=d^\varphi(\iv,E^{k-1}_\alpha)<\frac{4}{\alpha}$, thus $\iv''=\iv-\iv'\in\mN_\alpha^\varphi$.
This yields \eqref{nucleation1}.
Moreover, again by \eqref{rem:minkowski} we get that the Minkowski sum in \eqref{eq:minkowskik} preserves assumption \eqref{monotone-edges}, so $E^k_\alpha$ still satisfies \eqref{monotone-edges} and the thesis is proved.
\end{proof}

\section{The limit motion}\label{sec:limitmotion}

In this section we characterize the minimizing movements of scheme \eqref{MM-scheme} as $\tau,\e\to0$ in the critical regime $\e=\alpha\tau$ for any positive value of the parameter $\alpha$ outside the singular set $\Lambda^\varphi$, under the assumption that $E(\mN_\alpha^\varphi)$ complies with \eqref{monotone-edges}.

As already explained at the beginning of Section \ref{sec:scaledcheckbd}, we also prove the existence of a value for $\alpha$ depending only on the chosen norm $\varphi$, above which every minimizing movement is trivial.
For every $\alpha$ below the pinning threshold, instead, the limit motion is a family of expanding sets, \emph{nucleating} from the origin with constant velocity, as the limit set $E(t)$ turns out to be a dilation of the (renormalized) polygon
\begin{equation}\label{eq:nucleuspol}
P_\alpha^\varphi := \begin{cases}\displaystyle
\Big(\max_{\iv\in\mN_\alpha^\varphi}i_1\Big)^{-1}\conv(\mN_\alpha^\varphi) & \displaystyle \text{if } \mN_\alpha^\varphi\neq\{(0,0)\} 
\\
\{(0,0)\} & \displaystyle \text{if } \mN_\alpha^\varphi=\{(0,0)\}
\end{cases}
\end{equation}
Note that $\max_{\iv\in\mN_\alpha^\varphi}i_1\in\{2\lfloor\frac{2}{\alpha}\rfloor,\lfloor\frac{4}{\alpha}\rfloor\}$, from the definition of $\mN_\alpha^\varphi$ and the fact that $\varphi(i_1,0)=i_1$, $i_1\in \N$.

\begin{theorem}\label{limit1}
Let $\varphi$ be a symmetric absolute normalized norm satisfying {\rm\ref{norm-derivative}}, let $\alpha>0$ be given such that $\alpha\not\in\Lambda^\varphi$ and let $\mathcal{F}_{\e,\tau}^\varphi$ be defined by \eqref{energy}.
Let $\mN_\alpha^\varphi$ be as in Definition {\rm\ref{nucleus-def}}.
If the set $E(\mN_\alpha^\varphi)$ satisfies assumption \eqref{monotone-edges}, then there exists a unique minimizing movement $E:[0,+\infty)\to\mathcal{X}$ for the scheme \eqref{MM-scheme} at regime $\e=\alpha\tau$ defined by
\begin{equation}\label{MM-limit-formula}
E(t) = v_\alpha^\varphi \, t \, P_\alpha^\varphi \quad \text{for every } t\ge0\,,
\end{equation}
where $P_\alpha^\varphi$ is defined in \eqref{eq:nucleuspol} and $v_\alpha^\varphi=\alpha\max_{\iv\in\mN_\alpha^\varphi}i_1$.
Moreover, there exists a unique discrete solution $E_{\e,\tau}(t)$ of scheme \eqref{MM-scheme} at regime $\e=\alpha\tau$ and there holds
\begin{equation}\label{weakconv}
\chi_{E_{\varepsilon,\tau}(t)}\weakcs\frac{1}{2}\,\chi_{E(t)}, \quad \text{for every } t\ge0 \quad \text{as }\e\to0.
\end{equation}
\end{theorem}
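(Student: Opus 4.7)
The plan is to reduce the statement to a direct reading of Theorem \ref{thm:nucleation} through the lattice point-counting identity of Proposition \ref{prop:iden}, and then to extract the Hausdorff (respectively, weak-$*$) limit of the piecewise-constant interpolations by a straightforward rescaling. First I would observe that, by the scaling identity preceding \eqref{MM-scheme2}, the unique discrete solution of \eqref{MM-scheme} at regime $\e=\alpha\tau$ is $E_{\e,\tau}^k=\e E_\alpha^k$, where $\{E_\alpha^k\}$ is furnished by Theorem \ref{thm:nucleation}. Since $\mN_\alpha^\varphi\subset\Ze$ and $\Ze$ is a two-dimensional lattice, Proposition \ref{prop:iden} applied with $\Lambda=\Ze$ and $\mathcal{Q}=\conv(\mN_\alpha^\varphi)$ gives
$$
Z(E_\alpha^k)=\underbrace{\mN_\alpha^\varphi+\cdots+\mN_\alpha^\varphi}_{k\text{-times}}=\bigl(k\,\conv(\mN_\alpha^\varphi)\bigr)\cap\Ze,
$$
so that, setting $M:=\max_{\iv\in\mN_\alpha^\varphi}i_1$, we have $E_\alpha^k=E\bigl((kM\,P_\alpha^\varphi)\cap\Ze\bigr)$ and $v_\alpha^\varphi\,t\,P_\alpha^\varphi=\alpha t\,\conv(\mN_\alpha^\varphi)$.

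Next I would pass to the limit in the interpolation $E_{\e,\tau}(t)=\e E_\alpha^{\lfloor t/\tau\rfloor}$. Using $\tau=\e/\alpha$, we get
$$
E_{\e,\tau}(t)=\e\,E\Bigl(\bigl(\lfloor\alpha t/\e\rfloor\,\conv(\mN_\alpha^\varphi)\bigr)\cap\Ze\Bigr).
$$
Because $\e\,\lfloor\alpha t/\e\rfloor\to\alpha t$ uniformly on compact time intervals and $\conv(\mN_\alpha^\varphi)$ is a fixed convex polygon, the dilated polygon $\e\lfloor\alpha t/\e\rfloor\conv(\mN_\alpha^\varphi)$ converges to $\alpha t\,\conv(\mN_\alpha^\varphi)$ in Hausdorff distance. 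Discretizing on $\e\Ze$ and filling with $\e$-squares perturbs the set only by an $O(\e)$ boundary layer, hence
$$
d_{\mathcal H}\bigl(E_{\e,\tau}(t),v_\alpha^\varphi\,t\,P_\alpha^\varphi\bigr)\to 0,\qquad t\ge 0,
$$
which yields \eqref{MM-limit-formula} and, by uniqueness of $\{E_\alpha^k\}$, the uniqueness of the minimizing movement. In the pinned case $\mN_\alpha^\varphi=\{(0,0)\}$ (which by Lemma \ref{lemball} occurs whenever $4/\alpha$ is below the smallest value of $\varphi$ on $\Ze\setminus\{0\}$), the formula degenerates to $E(t)\equiv\{(0,0)\}$, consistently with the definition of pinning threshold.

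For the weak-$*$ statement \eqref{weakconv}, I would exploit that $E_{\e,\tau}(t)\in\A_\e^e$ is a genuine even $\e$-checkerboard set whose support is Hausdorff-close to $E(t)$ and which contains $E(\e\Ze\cap K)$ for every compact $K\subset\mathrm{int}\,E(t)$ once $\e$ is small. The density of $\e\Ze$-centers in any ball is $1/(2\e^2)$, each contributing an $\e$-square of area $\e^2$, so on any test function $\psi\in C_c(\R^2)$ a Riemann-sum argument gives
$$
\int_{\R^2}\chi_{E_{\e,\tau}(t)}(\x)\,\psi(\x)\,\mathrm d\x\ \longrightarrow\ \tfrac{1}{2}\int_{E(t)}\psi(\x)\,\mathrm d\x,
$$
which is exactly $\chi_{E_{\e,\tau}(t)}\weakcs\tfrac12\chi_{E(t)}$; alternatively, this is Remark \ref{remk-haus-top} read for the set $E(t)$.

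The main (and only) non-routine step is the structural one, namely invoking Theorem \ref{thm:nucleation}; everything downstream is a rescaling and a passage to the Hausdorff limit of dilations of a fixed lattice polygon, with the factor $\tfrac12$ in \eqref{weakconv} being the natural density of the even checkerboard. I do not expect real obstacles beyond the careful bookkeeping of the $O(\e)$ boundary layer and the identification of the constants $M$ and $v_\alpha^\varphi$ so that $v_\alpha^\varphi\,t\,P_\alpha^\varphi=\alpha t\,\conv(\mN_\alpha^\varphi)$.
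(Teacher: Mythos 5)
Your proposal is correct and follows essentially the same route as the paper's proof: rescale to $E_{\e,\tau}^k=\e E_\alpha^k$, invoke Theorem~\ref{thm:nucleation} together with Proposition~\ref{prop:iden} to identify $Z(E_\alpha^k)$ with $\bigl(k\,\conv(\mN_\alpha^\varphi)\bigr)\cap\Ze$, control the Hausdorff distance by an $O(\e)$ boundary layer plus the $\lfloor\alpha t/\e\rfloor$ time-discretization error, and obtain \eqref{weakconv} from the density $1/2$ of the even checkerboard. No gaps; the bookkeeping of $M$ and $v_\alpha^\varphi$ matches the paper's identity $v_\alpha^\varphi\,t\,P_\alpha^\varphi=\alpha t\,\conv(\mN_\alpha^\varphi)$.
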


\begin{proof}
By a scaling argument, for every discrete solution $E_{\e,\tau}^k$ of \eqref{MM-scheme} in the regime $\e=\alpha\tau$ we have $E_{\e,\tau}^k=\e E_\alpha^k$ for every $k\ge0$, where $E_\alpha^k$ denotes a discrete solution of \eqref{MM-scheme2}.
Then, by Theorem \ref{thm:nucleation} there exists a unique minimizing movement of scheme \eqref{MM-scheme} at regime $\e=\alpha\tau$.
Since, by Proposition \ref{prop:iden} and \eqref{eq:nucleuspol},
$$
\underbrace{\mN_\alpha^\varphi+\dots+\mN_\alpha^\varphi}_{k\text{-times}} = \Big(k \frac{v_\alpha^\varphi}{\alpha} P_\alpha^\varphi\Big) \cap\Ze,
$$
we get that $\conv(Z_\e(E_{\e,\tau}(t))) = \e \frac{v_\alpha^\varphi}{\alpha} \lfloor\frac{\alpha t}{\varepsilon}\rfloor P_\alpha^\varphi$.
Moreover, noting that $d_{\mathcal{H}}(F,\conv(Z_\e(F)))<\e$ for any $F\in\A_\e$, we get
$$d_{\mathcal{H}}\Big(E_{\e,\tau}(t),v_\alpha^\varphi \, t \, P_\alpha^\varphi\Big) < \e + v_\alpha^\varphi \, \Big(t-\frac{\varepsilon}{\alpha}\Big\lfloor\frac{\alpha t}{\varepsilon}\Big\rfloor\Big)$$
which tends to zero as $\e\to0$, for any $t\ge0$, whence \eqref{MM-limit-formula} follows.
Eventually, from the fact that
$|E_{\e,\tau}(t)\cap A|\to\frac{1}{2}|A| $ as $\e\to0$
for any open set $A\subset E(t)$, we get \eqref{weakconv}.
\end{proof}

\begin{definition}[pinning threshold]\label{pin-def}
We define the \emph{pinning threshold} of the motion obtained by solving scheme \eqref{MM-scheme}
as
\begin{equation}
\alpha_\varphi:=\inf\bigl\{\alpha>0\,:\,\, E^\alpha(t)\equiv\{(0,0)\} \mbox{ for every $E^\alpha$ minimizing movement of \eqref{MM-scheme}}\bigr\}\,.
\label{eq:pinn-thresh}
\end{equation}
\end{definition}

It turns out that $\alpha_\varphi$ is related to the singular set $\Lambda^\varphi$ defined in \eqref{singularset} as follows.

\begin{proposition}\label{prop:pinning}
The pinning threshold is given by $\displaystyle\alpha_\varphi=\frac{4}{\varphi(1,1)}=\max\Lambda^\varphi$.
\end{proposition}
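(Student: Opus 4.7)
The plan is to prove the two asserted equalities separately. First I would establish $\max \Lambda^\varphi = 4/\varphi(1,1)$ by reducing to the claim $\varphi(1,1) = \min_{\iv \in \Ze \setminus \{(0,0)\}} \varphi(\iv)$. For $\iv \in \Ze \setminus \{(0,0)\}$ with both components nonzero, monotonicity of the absolute norm \eqref{eq:monotonic} combined with the symmetry \ref{ass-H1} and absoluteness gives $\varphi(\iv) \geq \varphi(\mathrm{sgn}(i_1),\mathrm{sgn}(i_2)) = \varphi(1,1)$. If instead a component vanishes, the other is an even integer of modulus at least $2$, so $\varphi(\iv) \geq 2\varphi(1,0) = 2$, while the triangle inequality and the normalization \ref{ass-H2} yield $\varphi(1,1) \leq \varphi(1,0) + \varphi(0,1) = 2$. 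This produces the minimum.

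For the upper bound $\alpha_\varphi \leq 4/\varphi(1,1)$, I fix $\alpha > 4/\varphi(1,1)$; automatically $\alpha \notin \Lambda^\varphi$, and by the previous step $B_{4/\alpha}^\varphi \cap \Ze = \{(0,0)\}$. Proposition \ref{firststep} then forces $\mN_\alpha^\varphi = \{(0,0)\}$, and since $E(\mN_\alpha^\varphi) = q$ has no discrete edges, assumption \eqref{monotone-edges} is vacuously satisfied. Theorem \ref{thm:nucleation} yields $E_\alpha^k = q$ for every $k \geq 0$, hence $E_{\e,\tau}(t) = q_\e \to \{(0,0)\}$ in the Hausdorff topology as $\e \to 0$, so the (unique) minimizing movement is trivial and such $\alpha$ lies in the pinning set.

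For the lower bound $\alpha_\varphi \geq 4/\varphi(1,1)$, I would exhibit a non-trivial minimizing movement for every $\alpha \in (0, 4/\varphi(1,1))$. When $\alpha \notin \Lambda^\varphi$ and \eqref{monotone-edges} is satisfied by $E(\mN_\alpha^\varphi)$, Theorem \ref{limit1} applies and yields $E(t) = v_\alpha^\varphi \, t \, P_\alpha^\varphi$, which is non-trivial because $(\pm 1, \pm 1) \in \mN_\alpha^\varphi$ entails $v_\alpha^\varphi = \alpha \max_{\iv \in \mN_\alpha^\varphi} i_1 \geq \alpha > 0$. For the remaining parameters, I rely on the fact that at the first minimization every admissible extension of $q$ must contain $q(\pm 1, \pm 1)$, since each of those squares contributes $-4 + \alpha\varphi(1,1) < 0$ to the functional; iterating via the monotonicity constraint and using the compactness from Lemma \ref{lemball} guarantees that any Hausdorff limit of discrete orbits has positive diameter for $t > 0$. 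Combining the two bounds yields $\alpha_\varphi = 4/\varphi(1,1)$, which together with the first step completes the proof.

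The main obstacle is making the lower bound uniform across $\alpha \in (0, 4/\varphi(1,1))$ outside the regime where Theorem \ref{limit1} is directly available—namely when $\alpha \in \Lambda^\varphi$ (producing non-uniqueness of the discrete scheme) or when $E(\mN_\alpha^\varphi)$ fails \eqref{monotone-edges}. A perturbative argument along a sequence $\alpha_n \notin \Lambda^\varphi$ with $\alpha_n \to \alpha$, together with the first-step strict inclusion $E_\alpha^1 \supsetneq q$ that holds for \emph{every} $\alpha < 4/\varphi(1,1)$, is expected to suffice to transfer non-triviality of the limit motion.
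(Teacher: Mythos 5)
Your core argument is the same as the paper's, which is essentially a one\hbox{-}line proof: observe that $B_{4/\alpha}^\varphi\cap\Ze=\{(0,0)\}$ if and only if $\alpha>4/\varphi(1,1)$ and then invoke Proposition~\ref{firststep} (and, implicitly, Theorem~\ref{limit1}) to conclude that the motion is pinned exactly above this value. Your explicit verification that $\varphi(1,1)=\min\{\varphi(\iv):\iv\in\Ze\setminus\{(0,0)\}\}$ -- using monotonicity for points with both coordinates nonzero and \ref{ass-H2} together with the triangle inequality for points on the axes -- is correct and is a detail the paper leaves implicit; note that it genuinely needs the normalization (Example~\ref{1D-ex} shows it fails otherwise). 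The upper bound and the lower bound in the generic case ($\alpha\notin\Lambda^\varphi$ and $E(\mN_\alpha^\varphi)$ satisfying \eqref{monotone-edges}) are handled exactly as the paper intends.

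The one step that would fail is your treatment of the exceptional parameters. You claim that the monotonicity constraint together with the compactness of Lemma~\ref{lemball} guarantees that any Hausdorff limit of discrete orbits has positive diameter for $t>0$. But Lemma~\ref{lemball} only bounds the growth from \emph{above}, and monotonicity only gives $E_\alpha^k\supseteq E_\alpha^1$, i.e.\ a diameter bounded below by a constant in the unscaled lattice; after multiplication by $\e$ this tends to $0$, so these two facts alone are compatible with a pinned limit. To rule out pinning you need the diameter of $E_\alpha^k$ to grow linearly in $k$, which is exactly what the Minkowski-sum structure of Theorem~\ref{thm:nucleation} provides in the generic case and what is missing for $\alpha\in\Lambda^\varphi$ or when \eqref{monotone-edges} fails. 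In fairness, the paper's own two-line proof does not address these exceptional values either (the proposition is effectively read under the standing assumptions of Section~\ref{sec:limitmotion}), so this is a gap in the extra generality you are attempting rather than in the route itself; but as written the justification you give for that extra step is not valid, and the concluding ``perturbative argument'' is only a sketch.
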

\begin{proof}
We note that $B_\frac{4}{\alpha}^\varphi\cap\Ze=\{(0,0)\}$ if and only if $\alpha>\frac{4}{\varphi(1,1)}$, thus Proposition \ref{firststep} yields the result.
\end{proof}

\begin{remark}\label{remk:nucleation}
The results of Theorems \ref{thm:nucleation} and \ref{limit1} can be extended to solutions of a minimizing-movements scheme with a more general initial datum $E^0$.
Indeed, let $E^0_{\rm disc}\in\A_{\rm conv}^e$ be a set satisfying \eqref{non-degeneracy}, \eqref{monotone-edges} and \eqref{E-symm}. We can apply Proposition \ref{steps} with $E'=E^0_{\rm disc}$ obtaining the first step of the discrete solution corresponding to scheme \eqref{MM-scheme2} with $E_\alpha^0=E^0_{\rm disc}$.
Then, if $E(\mN_\alpha^\varphi)$ satisfies assumption \eqref{monotone-edges}, from the same arguments of the proof of Theorem \ref{thm:nucleation} there exists a unique discrete solution of the scheme
$$
\begin{cases}
E^0_\alpha=E^0_{\rm disc} \\
E_\alpha^{k+1}\in\argmax{E'\in\D,\,E'\supset E}\mathcal{F}_\alpha^\varphi(E',E_\alpha^k) & k\ge1,
\end{cases}
$$
which is given, for any $k\ge1$, by $
Z(E^k_\alpha) = Z(E^0_{\rm disc})+\underbrace{\mN_\alpha^\varphi + \dots + \mN_\alpha^\varphi}_{k\text{-times}}$.

We therefore obtain a limit result analogous to that of Theorem \ref{limit1}, provided the initial datum $E^0$ can be approximated by a sequence of admissible sets $E^0_{\e_j}\in\A_{\e_j}$ whose rescaled sets
$\frac{1}{\e_j}E_{\e_j}^0\in\A^e_{\rm conv}$ satisfy \eqref{non-degeneracy}, \eqref{monotone-edges} and \eqref{E-symm}.
This implies, in particular, that $E^0$ must be a convex symmetric set with respect to coordinated axes and bisectors $x_1=\pm x_2$.
Then there exists (up to subsequences) a minimizing movement $E:[0,+\infty)\to\mathcal{X}$ for the scheme
\begin{equation}\label{initial-datum-scheme}
\begin{cases}
E^0_{\e,\tau}=E^0_\e \\
E^{k+1}_{\e,\tau}\in\argmin{E'\in\D_\e,\, E'\supset E^k_{\e,\tau}}\mathcal{F}_{\e,\tau}^\varphi(E',E) & k\ge1
\end{cases}
\end{equation}
at regime $\e=\alpha\tau$ defined by
\begin{equation}\label{MM-limit-formula2}
E(t) = E^0+v_\alpha^\varphi \, t \, P_\alpha^\varphi \quad \text{for every } t\ge0.
\end{equation}
Moreover, there exists a unique discrete flat flow $E_{\e_j,\tau_j}(t)$ of the scheme \eqref{initial-datum-scheme} along the sequence $\e_j=\alpha\tau_j$ and there holds
$\chi_{E_{\e_j,\tau_j}(t)}\weakcs\frac{1}{2}\,\chi_{E(t)}$ for every $t\ge0$
as $j\to+\infty$.
\end{remark}

\subsection{Examples of explicit evolutions}\label{sec:examples}

We continue our analysis by providing several examples of minimizing movements that can be completely characterized, which exhibit interesting phenomena that may appear due to the discrete nature of our problem.

\begin{figure}[h]
\centering
\def\svgwidth{270pt}
\begingroup%
  \makeatletter%
  \providecommand\color[2][]{%
    \errmessage{(Inkscape) Color is used for the text in Inkscape, but the package 'color.sty' is not loaded}%
    \renewcommand\color[2][]{}%
  }%
  \providecommand\transparent[1]{%
    \errmessage{(Inkscape) Transparency is used (non-zero) for the text in Inkscape, but the package 'transparent.sty' is not loaded}%
    \renewcommand\transparent[1]{}%
  }%
  \providecommand\rotatebox[2]{#2}%
  \newcommand*\fsize{\dimexpr\f@size pt\relax}%
  \newcommand*\lineheight[1]{\fontsize{\fsize}{#1\fsize}\selectfont}%
  \ifx\svgwidth\undefined%
    \setlength{\unitlength}{1274.74731445bp}%
    \ifx\svgscale\undefined%
      \relax%
    \else%
      \setlength{\unitlength}{\unitlength * \real{\svgscale}}%
    \fi%
  \else%
    \setlength{\unitlength}{\svgwidth}%
  \fi%
  \global\let\svgwidth\undefined%
  \global\let\svgscale\undefined%
  \makeatother%
  \begin{picture}(1,0.34432167)%
    \lineheight{1}%
    \setlength\tabcolsep{0pt}%
    \put(0,0){\includegraphics[width=\unitlength,page=1]{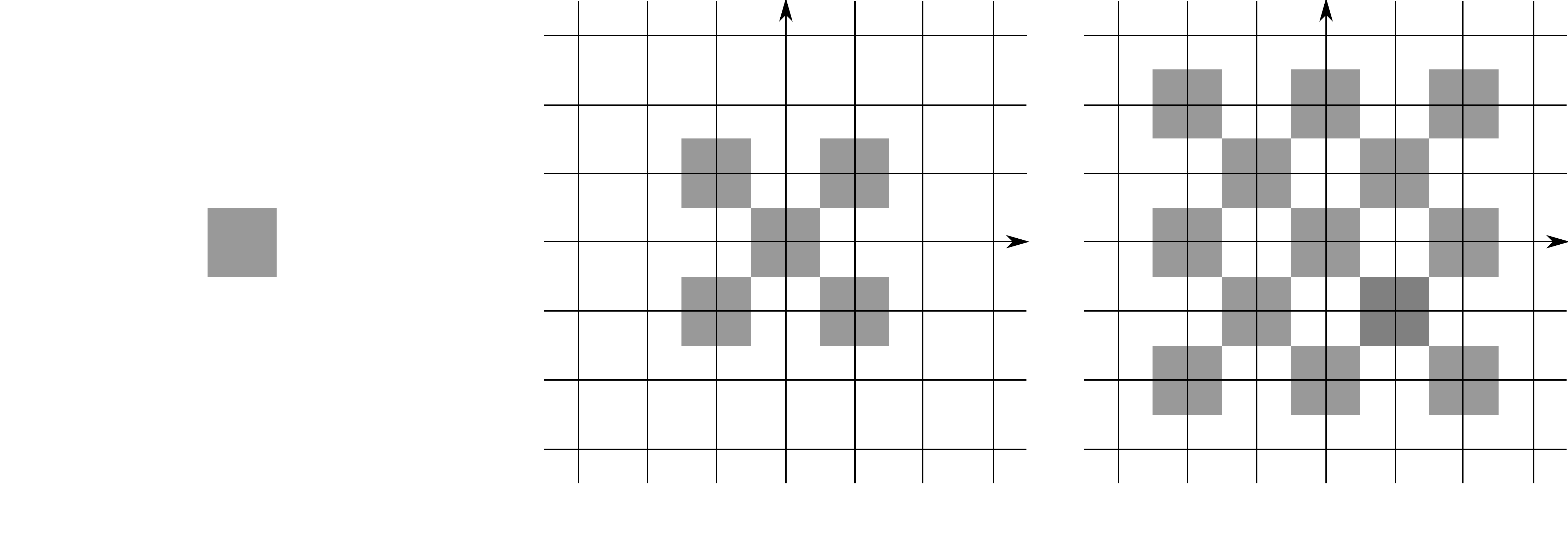}}%
    \put(0.07777961,0.00467693){\color[rgb]{0,0,0}\makebox(0,0)[lt]{\lineheight{0}\smash{\begin{tabular}[t]{l}$E_{\varepsilon,\tau}^{0}$\\ \end{tabular}}}}%
    \put(0.46090678,0.00467693){\color[rgb]{0,0,0}\makebox(0,0)[lt]{\lineheight{0}\smash{\begin{tabular}[t]{l}$E_{\varepsilon,\tau}^{1}$\\ \end{tabular}}}}%
    \put(0.80985752,0.00467693){\color[rgb]{0,0,0}\makebox(0,0)[lt]{\lineheight{0}\smash{\begin{tabular}[t]{l}$E_{\varepsilon,\tau}^{2}$\\ \end{tabular}}}}%
    \put(0,0){\includegraphics[width=\unitlength,page=2]{squarestep.pdf}}%
  \end{picture}%
\endgroup%

\caption{Some steps of the discrete evolution.}
\label{fig:square}
\end{figure}
\begin{example}[the $\ell^\infty$-norm]\label{ex:linfty}
The solutions of the unconstrained scheme \eqref{MM-scheme-unc} have already been analyzed in any dimension in the case $\varphi=\|\cdot\|_\infty$ in \cite{BraSci14}, where it has been proved that every step of the discrete evolution is an even $\e$-checkerboard (see Fig.~\ref{fig:square}). Thus, solutions of \eqref{MM-scheme} and \eqref{MM-scheme-unc} coincide.
The singular set \eqref{singularset} corresponds to $\Lambda^\varphi=\{\frac{4}{n}\}_{n\in\N}$ and the pinning threshold is $\alpha_\varphi=4$.
Here, since $
\mN_\alpha^\varphi=\Big[-\frac{4}{\alpha},\frac{4}{\alpha}\Big]^2\cap\Ze$ for every $\alpha\not\in\Lambda^\varphi$,
$E(\mN_\alpha^\varphi)$ always fulfills \eqref{monotone-edges}.
Therefore, from Theorem \ref{limit1}, for every $\alpha\not\in\Lambda^\varphi$ the minimizing movement is
$$
E(t)=\Big[-\alpha\Big\lfloor\frac{4}{\alpha}\Big\rfloor t,\alpha\Big\lfloor\frac{4}{\alpha}\Big\rfloor t\Big]^2,\quad \text{for every } t\ge0.
$$
We note that, for this choice of the norm $\varphi$, the polygon $P_\alpha^\varphi=[-1,1]^2$ does not depend on $\alpha$.
\end{example}

\begin{figure}[htbp]
\begin{minipage}{0.5\linewidth}
\begin{center}
\includegraphics[scale=.7]{ex-tech-ass1}
\end{center}
\end{minipage}
\begin{minipage}{0.5\linewidth}
\begin{center}
\includegraphics[scale=.7]{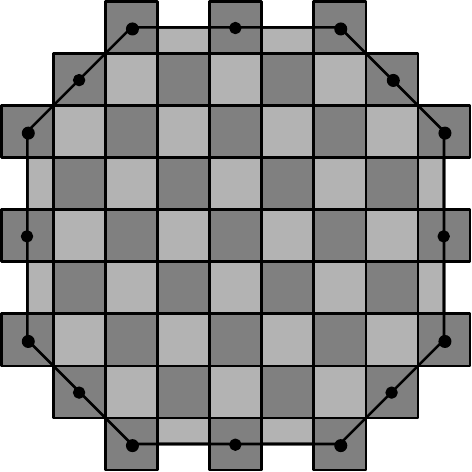}
\end{center}
\end{minipage}
\caption{For different choices of $\alpha$ the polygon $P_\alpha^\varphi$ may have different shapes.}
\label{fig:ex-euclidea}
\end{figure}

\begin{example}[$\alpha$-depending shape of $P_\alpha^\varphi$]

Contrarily to the previous example, in the case of the Euclidean norm the polygon $P_\alpha^\varphi$ may change wih  $\alpha$ (see, for instance, Fig.~\ref{fig:ex-euclidea} corresponding to $\alpha=0.85$ on the left and  $\alpha=0.7$ on the right).
Therefore, the limit motions corresponding to the two different values of $\alpha$ are not homothetic. This phenomenon may happen for those norms $\varphi$ whose balls are not polygons or are polygons having a unit normal vectors different from $(0,\pm1),$ $(\pm1,0)$ or $(\pm\frac{1}{\sqrt{2}},\pm\frac{1}{\sqrt{2}})$.
\end{example}

\begin{figure}[h]
\centering
\def\svgwidth{150pt}
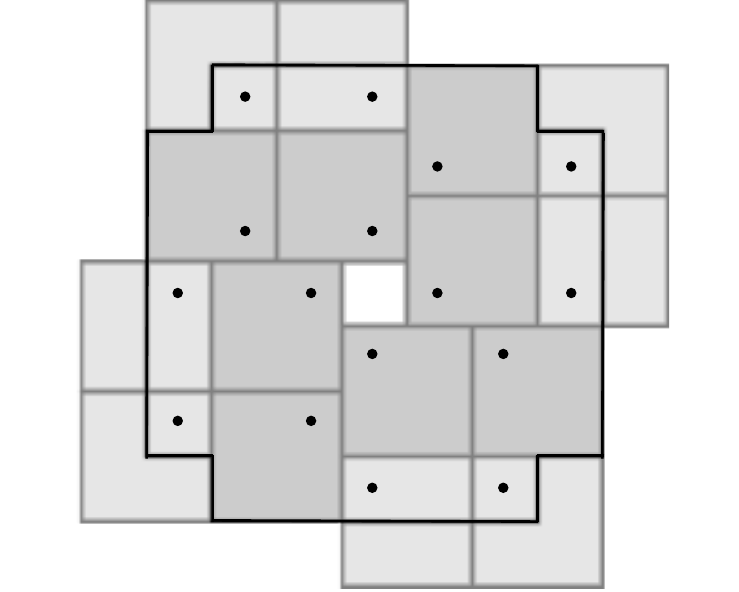
\caption{\small The picture clarifies the $2\times2$-square covering $\mathcal{S}_o(E)$ for a set $E$, whose boundary is marked by a bold black line. The darker $2\times2$-squares are respectively in $\mathcal{S}_o^c(E)$, the lighter ones in $\mathcal{S}_o^b(E)$. The areas in white are those left uncovered.
}
\label{bsc2}
\end{figure}
\begin{example}[the $\ell^1$-norm]\label{ex:l1-norm}
We consider now $\varphi=\|\cdot\|_1$.
Also in this case, as for the $\infty$-norm, the structure of $\varphi$ facilitates the analysis of the unconstrained scheme \eqref{MM-scheme-unc}. 
We then study the rescaled problem
\begin{equation}\label{MM-scheme-l1}
\begin{cases}
E^0=q \\
E_\alpha^{k+1}\in\argmin{E'\in\D}\mathcal{F}_\alpha^\varphi(E',E) & k\ge1\,,
\end{cases}
\end{equation}
where we separately examine the cases in which the minimizer of the first step contains $q$ or not.
For this, in addition to $\mathcal{S}_e(E)$ of Definition~\ref{coverings} we introduce the family 
\begin{equation}\label{eq:oddcovering}
\mathcal{S}_o(E):=\left\{Q(\jv):\, Q(\jv)\cap E\neq\emptyset \mbox{ and } j_1 \mbox{ even},  j_2 \mbox{ odd}\right\}
\end{equation}
which is a covering of $E\setminus q$ (see Fig.~\ref{bsc2}) and, accordingly, we consider the partition $\mathcal{S}_o(E)=\mathcal{S}^b_o(E)\cup\mathcal{S}^c_o(E)$.

\begin{figure}[htp]
\begin{minipage}{0.5\linewidth}
\begin{center}
\includegraphics[width=0.5\linewidth]{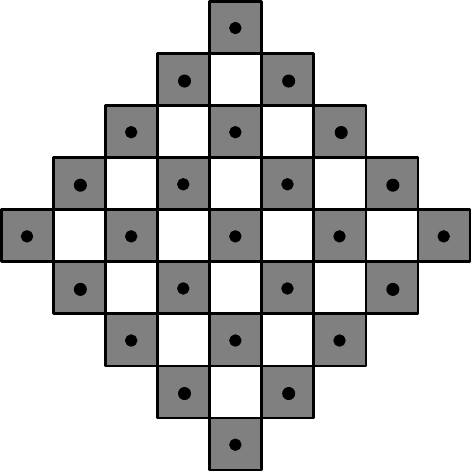}
\end{center}
\end{minipage}
\begin{minipage}{0.5\linewidth}
\begin{center}
\includegraphics[width=0.6\linewidth]{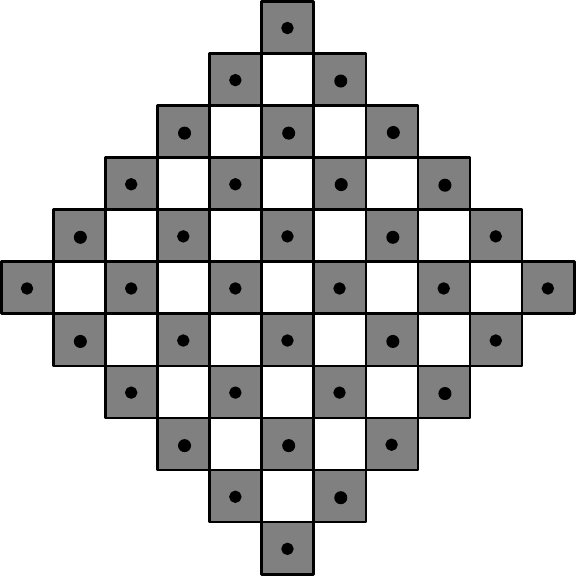}
\end{center}
\end{minipage}
\caption{An example of $E_\alpha^e$ (on the left) and $E_\alpha^o$ (on the right).}
\label{fig:rhombus}
\end{figure}

Now, in the case of scheme \eqref{MM-scheme-l1} with the monotonicity constraint, Proposition \ref{firststep} and \eqref{singularset} ensure that, if $\alpha\not\in\{\frac{2}{n} : n\in\N\}$ then
$
\argmin{E\supset q}\mathcal{F}_\alpha^\varphi(E,q)=E(\Ze\cap B_\frac{4}{\alpha}^\varphi) =: E^e_\alpha$.
In the unconstrained case, an analogous argument as in the proof of Proposition \ref{firststep}, with $\mathcal{S}_o$ in place of $\mathcal{S}_e$, 
shows that if $\alpha\not\in\{\frac{4}{2n-1} : n\in\N\}$ then
$
\argmin{E\not\supset q}\mathcal{F}_\alpha^\varphi(E,q)=E(\Zo\cap B_\frac{4}{\alpha}^\varphi) =: E^o_\alpha$.
This proves that $E^1_\alpha$ is either an even or an odd checkerboard.
We remark that $B_r^\varphi$ is a regular rhombus (of radius $r$) and so are the convex hulls of $Z(E_\alpha^e)$ and $Z(E_\alpha^o)$. The checkerboard sets $E^e_\alpha$ and $E^o_\alpha$ are pictured in Fig.~\ref{fig:rhombus}.

\begin{figure}[htp]
\begin{minipage}{0.5\linewidth}
\begin{center}
\includegraphics[width=0.6\linewidth]{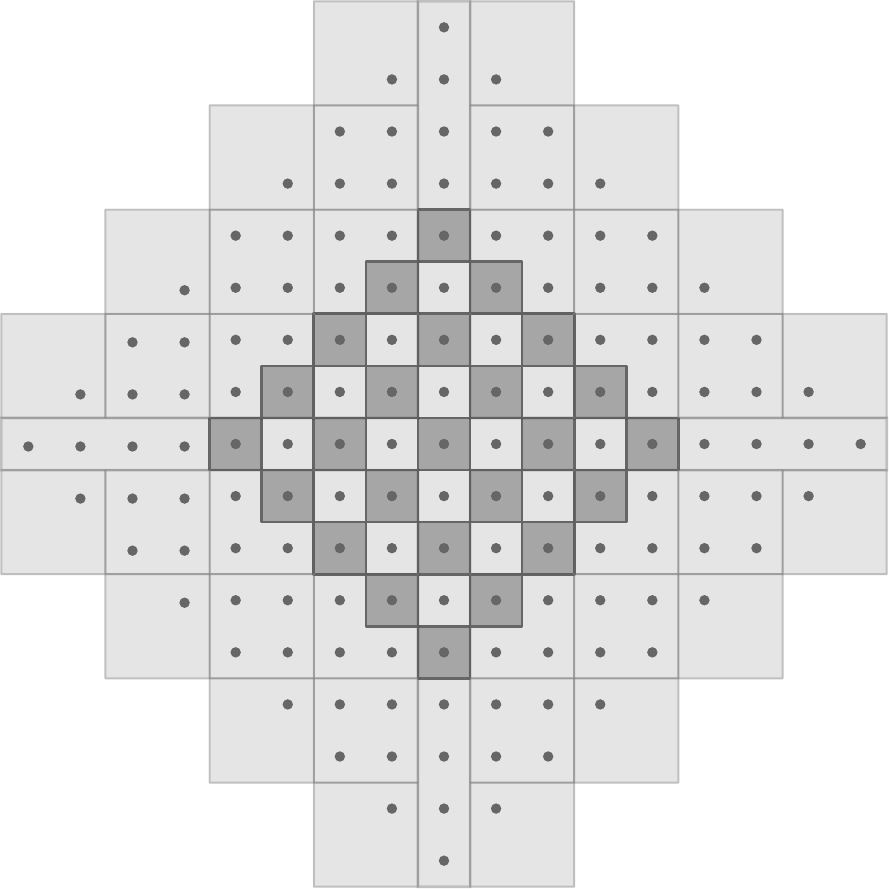}
\end{center}
\end{minipage}
\begin{minipage}{0.5\linewidth}
\begin{center}
\includegraphics[width=0.6\linewidth]{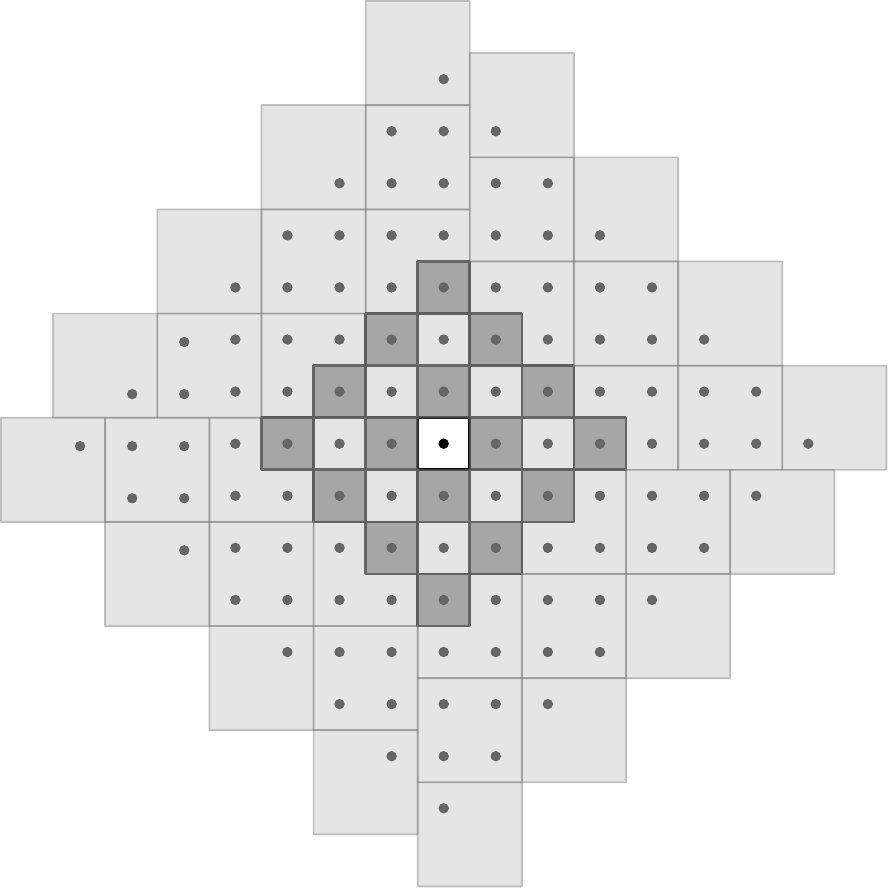}
\end{center}
\end{minipage}
\caption{On the left the covering $\mathcal{S}_e(E(\I^e))$ and the set $E_\alpha^e$.
On the right the covering $\mathcal{S}_o(E(\I^o))$ and the set $E_\alpha^o$.
The darker dots represent $\I^e$ and $\I^o$ respectively.}
\label{fig:1norm}
\end{figure}
The relevant point of this example is that, for this choice of the norm, the shape of the minimizers is very simple and the $2\times2$-square covering argument of Section~\ref{sec:bigsquares} directly applies to the $k^{th}$-step $E_\alpha^k$, $k\geq1$, without any further adjustment.
Moreover, it provides a covering of $\R^2$ (in the even case) or $\R^2\setminus q$ (in the odd case) and not only of $\R^2\setminus E(\conv(Z(E^{k-1}_\alpha)))$, see Figure \ref{fig:1norm}.
Thus, the corresponding localization argument allows to study the unconstrained problem.
Indeed, if $E^1_\alpha=E^e_\alpha$ then
for every $Q(\jv)\in\mathcal{S}_e(E(\I^e))$ we get
$$
\mathcal{F}_\alpha^\varphi(Q(\jv)\cap E(\Ze),E^1_\alpha) = \min_{E\in\mathcal{D}} \mathcal{F}_\alpha^\varphi(Q(\jv)\cap E,E^1_\alpha),
\quad
\mathcal{F}_\alpha^\varphi(\mathcal{C}_0\cap E(\Ze),E^1_\alpha) \le \min_{E\in\mathcal{D}} \mathcal{F}_\alpha^\varphi(\mathcal{C}_0\cap E,E^1_\alpha)
$$
whereas if $E^1_\alpha=E^o_\alpha$ then for every $Q(\jv)\in\mathcal{S}_o(E(\I^o))$ we get
$$
\mathcal{F}_\alpha^\varphi(Q(\jv)\cap E(\Zo),E^1_\alpha) \le \min_{E\in\mathcal{D}} \mathcal{F}_\alpha^\varphi(Q(\jv)\cap E,E^1_\alpha),
$$
where
$$
\I^e=\Big\{\iv\in\Ze\,:\, d^\varphi(\iv,E_\alpha^1)<\frac{4}{\alpha}\Big\},
\quad
\I^o=\Big\{\iv\in\Zo\,:\, d^\varphi(\iv,E_\alpha^1)<\frac{4}{\alpha}\Big\}
$$
which gives that $Z(E^2_\alpha)\in\{\I^e,\I^o\}$.
This yields, after an inductive argument, that $E^k_\alpha$ is either an even or an odd checkerboard.
The parity of $E^k_\alpha$ will be determined by a comparison between the two possible (checkerboard) configurations.
Nevertheless, a change of parity is eventually not energetically favorable. Indeed, assume $E_\alpha^k$ to be \emph{e.g.} an even checkerboard and set $\I=\{\iv\in\Zo\,:\, d^\varphi(\iv,E_\alpha^k)<\frac{4}{\alpha}\}$, we then get
\begin{align*}
\mathcal{F}_\alpha^\varphi(E(\I),E_\alpha^k)-\mathcal{F}_\alpha^\varphi(E_\alpha^k,E_\alpha^k) &\ge -4\#Z(E(\I))+2\alpha\#Z(E_\alpha^k)+4\#Z(E_\alpha^k)+c \\
&\ge -8\Big(\frac{4(k+1)}{\alpha}\Big)^2+2\alpha\Big(\frac{4(k+1)}{\alpha}\Big)^2+-8\Big(\frac{4k}{\alpha}\Big)^2+c \\
&= -c' k+c'' k^2+c,
\end{align*}
for some positive constants $c, c', c''$.
Since for $k$ large enough the contribution above is positive, for every fixed $\alpha\not\in\{\frac{4}{n}\}_{n\in\N}$ there exists an index $k_\alpha\in\N$ such that
$$
Z(E^k_\alpha)=Z(E^{k_\alpha}_\alpha)+\underbrace{\mN_\alpha^\varphi+\dots+\mN_\alpha^\varphi}_{(k-k_\alpha)\text{-times}}, \quad \text{for every } k\ge k_\alpha.
$$


\begin{figure}[h]
\centering
\def\svgwidth{350pt}
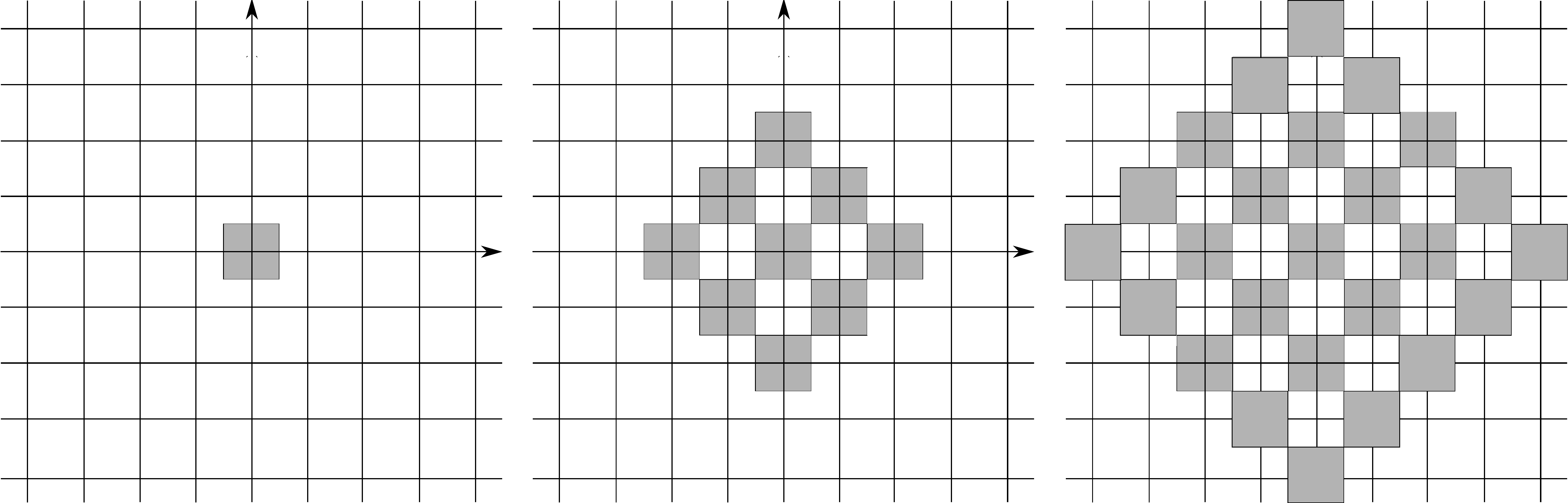
\caption{Some steps of the even evolution.}
\label{fig:rhombusevol}
\end{figure}

We can characterize the limit motion as follows.
For every $\alpha>0$ such that $\alpha\not\in\{\frac{4}{n}\}_{n\in\N}$ there exists a unique minimizing movement of unconstrained scheme \eqref{MM-scheme-unc} $E:[0,+\infty)\to\mathcal{X}$ and it satisfies
\begin{equation}\label{eq:limitrhombus}
E(t)=2\alpha\Big\lfloor\frac{2}{\alpha}\Big\rfloor t \mathcal{R}, \quad \text{for every } t\ge0,
\end{equation}
where $\mathcal{R}$ is the regular rhombus of radius $1$.
Note that, by Theorem \ref{limit1}, this coincides with the minimizing movement of the constrained scheme \eqref{MM-scheme}.

At least for the first step, the comparison between the energies of the two possible minimizers; \emph{i.e.}, $E^e_\alpha$ and $E^o_\alpha$, can be performed by a straightforward computation.
This induces a partition into subintervals of the set $(0,+\infty)\setminus\{\frac{4}{n} : n\in\N\}$, wherein one configuration is energetically more favourable than the other one.
Setting $R:=\lfloor\frac{4}{\alpha}\rfloor$, we get
\begin{align}
\mathcal{F}_\alpha^\varphi(E^e_\alpha,q) &= -4\Big(2\Big\lfloor\frac{R}{2}\Big\rfloor+1\Big)^2+4\alpha\sum_{j=1}^{\lfloor\frac{R}{2}\rfloor} (2j)^2\,, \label{vareven}\\
\mathcal{F}_\alpha^\varphi(E^o_\alpha,q) &= -4\Big(2\Big\lfloor\frac{R+1}{2}\Big\rfloor\Big)^2+4\alpha\sum_{j=1}^{\lfloor\frac{R+1}{2}\rfloor} (2j-1)^2+\alpha\,.\label{varodd}
\end{align}
After comparing the values in \eqref{vareven} and \eqref{varodd} we get that when $R$ is even
$$
\mathcal{F}_\alpha^\varphi(E^e_\alpha,q)<\mathcal{F}_\alpha^\varphi(E^o_\alpha,q) \mbox{ \, if and only if \,  }\alpha<\frac{4(2R+1)}{2R(R+1)-1},
$$
while when $R$ is odd
$$
\mathcal{F}_\alpha^\varphi(E^e_\alpha,q)<\mathcal{F}_\alpha^\varphi(E^o_\alpha,q)\mbox{ \, if and only if \,  }\alpha>\frac{4(2R+1)}{2R(R+1)+1}.
$$
Thus, for the following values of $\alpha$
\begin{equation*}
\alpha_C(R):=
\begin{cases}\displaystyle
\frac{4(2R+1)}{2R(R+1)+1} &\mbox{ if $R$ is odd,}  
\\
\displaystyle\frac{4(2R+1)}{2R(R+1)-1} &\mbox{ if $R$ is even,}
\end{cases}
\end{equation*}
the energies of the two checkerboards coincide and we also obtain that
\begin{equation}\label{l1-first}
E_\alpha^1 =
\begin{cases}
E^e_\alpha & \text{if } \displaystyle \alpha \in \bigcup_{h\ge1}(\alpha_C(2h+1),\alpha_C(2h))\cup(\alpha_C(1),+\infty), \\
E^o_\alpha & \text{if } \displaystyle \alpha \in \bigcup_{h\ge0}(\alpha_C(2h+2),\alpha_C(2h+1)).
\end{cases}
\end{equation}
In particular, \eqref{l1-first} provides an example of a discrete solution having an oscillating behavior; that is, a change of parity from a step to another, at least from $E^0_\alpha=q$ to $E_\alpha^1=E_\alpha^o$.

In this case, the pinning threshold of unconstrained  problem \eqref{MM-scheme-unc} is $\alpha_p=2$, as can be seen in formula \eqref{eq:limitrhombus}.
This is the same as that of the constrained problem \eqref{MM-scheme}, given by Proposition \ref{prop:pinning}.
In the constrained problem, for every $\alpha>2$, since $\mN_\alpha^\varphi=\{(0,0)\}$, $E^k_\alpha=q$ for every $k\ge1$.
Whereas, in the unconstrained problem, by \eqref{l1-first} we get that if $2<\alpha<\frac{12}{5}$ the discrete motion is not trivial; that is, $E^k_\alpha=\bigcup_{\|\iv\|_1=1}q(\iv)$ for every $k\ge1$, even though the limit motion is pinned.
\end{example}

%

\begin{figure}[h]
\centering
\def\svgwidth{130pt}
\begingroup%
  \makeatletter%
  \providecommand\color[2][]{%
    \errmessage{(Inkscape) Color is used for the text in Inkscape, but the package 'color.sty' is not loaded}%
    \renewcommand\color[2][]{}%
  }%
  \providecommand\transparent[1]{%
    \errmessage{(Inkscape) Transparency is used (non-zero) for the text in Inkscape, but the package 'transparent.sty' is not loaded}%
    \renewcommand\transparent[1]{}%
  }%
  \providecommand\rotatebox[2]{#2}%
  \newcommand*\fsize{\dimexpr\f@size pt\relax}%
  \newcommand*\lineheight[1]{\fontsize{\fsize}{#1\fsize}\selectfont}%
  \ifx\svgwidth\undefined%
    \setlength{\unitlength}{393.76185139bp}%
    \ifx\svgscale\undefined%
      \relax%
    \else%
      \setlength{\unitlength}{\unitlength * \real{\svgscale}}%
    \fi%
  \else%
    \setlength{\unitlength}{\svgwidth}%
  \fi%
  \global\let\svgwidth\undefined%
  \global\let\svgscale\undefined%
  \makeatother%
  \begin{picture}(1,0.99999824)%
    \lineheight{1}%
    \setlength\tabcolsep{0pt}%
    \put(0,0){\includegraphics[width=\unitlength,page=1]{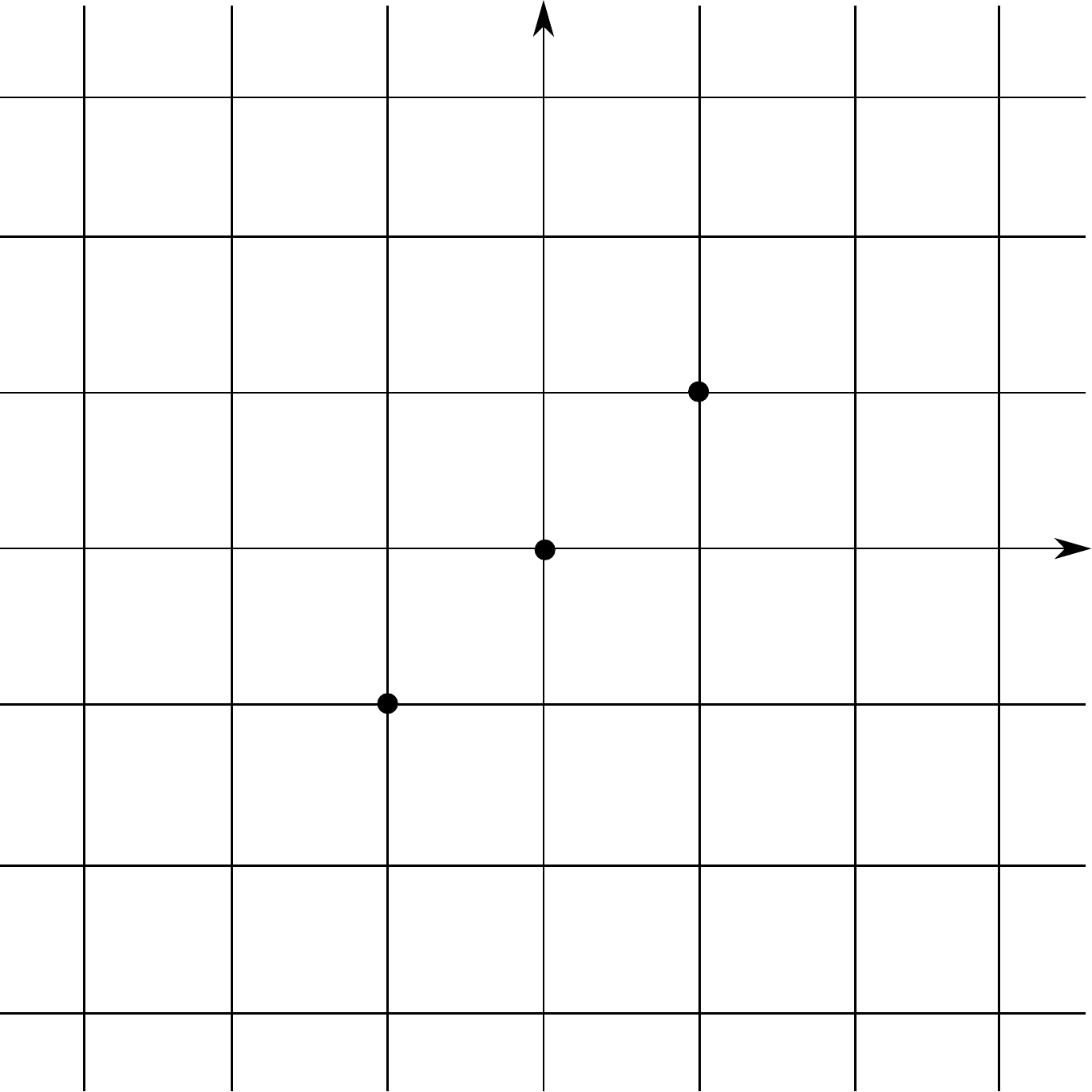}}%
    \put(0.43440851,0.42989249){\color[rgb]{0,0,0}\makebox(0,0)[lt]{\lineheight{1.25}\smash{\begin{tabular}[t]{l}$O$\end{tabular}}}}%
    \put(0,0){\includegraphics[width=\unitlength,page=2]{ellipse.pdf}}%
  \end{picture}%
\endgroup%

\caption{The unit ball of $\varphi$ for $a_{11}=2$ and $a_{12}=-\frac{5}{3}$.}
\label{fig:ellipse}
\end{figure}
\begin{example}[a strongly anisotropic norm]\label{ellypticnorm}
We now give, along the lines of Example~\ref{1D-ex}, another example where the discrete minimizers are (degenerate) checkerboard sets and the limit set is one-dimensional; \emph{i.e.}, a linearly growing segment. For this, we construct \emph{ad hoc} a strongly anisotropic \emph{non-absolute} norm $\varphi$ such that $\varphi(1,1)<\varphi(1,0)=\varphi(0,1)$. Namely, we consider the symmetric positive definite matrix ${\bf A}=(a_{ij})$ such that $a_{11}=a_{22}>1$, $a_{12}<0$ and
\begin{equation}
\frac{1}{8}<a_{11}+a_{12}<\frac{1}{2}\,,\quad 2<a_{11}-a_{12}\,.
\label{ellipassump}
\end{equation}
Correspondingly, we define the elliptic norm
\begin{equation}
\varphi(\x):=\sqrt{\x^t{\bf A}\x}=\sqrt{a_{11}(x_1^2+x_2^2)+2a_{12}x_1x_2}\,,
\label{asymmnorm}
\end{equation}
whose unit ball is pictured in Fig.~\ref{fig:ellipse}.

Assumption \eqref{ellipassump} ensures that $\varphi(1,1)=\sqrt{2(a_{11}+a_{12})}<\sqrt{a_{11}}=\varphi(1,0)=\varphi(0,1)$. In addition, we assume that
\begin{equation}
\frac{4}{\sqrt{a_{11}}}<\alpha \leq\frac{2\sqrt{2}}{\sqrt{a_{11}+a_{12}}}\,.
\label{pinnthresh}
\end{equation}
In this case, if we let $E^0_\alpha=q$, 
the set of centers of the first step is
\begin{equation}
 \mN_\alpha^\varphi=Z(E^1_\alpha)=\left\{\iv\in\Z^2:\quad \varphi(\iv)\leq \frac{4}{\alpha}\right\}=\{(-1,-1), (0,0), (1,1)\},
\label{eq:line1}
\end{equation}
whence, arguing by induction on the step $k$, we infer that
\begin{equation}
Z(E^k_\alpha)=\{(j,j):|j|=0,1,\dots, k\}\,,\quad k\geq1\,.
\label{eq:linek}
\end{equation}
\begin{figure}[h]
\centering
\def\svgwidth{250pt}
\input{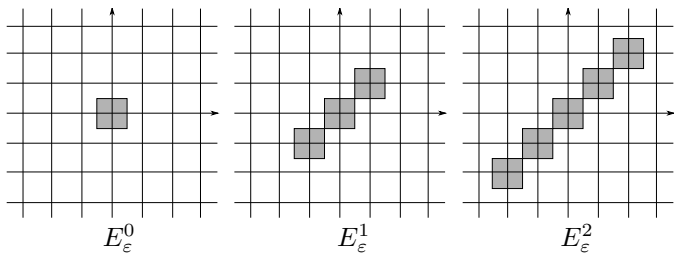}
\caption{Some steps of the evolution.}
\label{zonot}
\end{figure}
A similar computation as for the proof of Theorem~\ref{thm:nucleation} shows that an analogous characterization for $Z(E_\alpha^k)$ by means of the Minkowski sum as in \eqref{nucleation1} holds. The polygon $P_\alpha^\varphi$ here reduces to the line segment $\mathcal{L}$ of length $2\sqrt{2}$ centered at $0$ with slope $1$. In Fig.~\ref{zonot} some steps of the discrete evolution are represented.
Note that the proof of \eqref{eq:line1}-\eqref{eq:linek} does not require any covering argument in the fashion of Section~\ref{sec:coveringstep1} or any monotonicity assumption \eqref{monotone-edges}. 
The following characterization of the limit evolution immediately follows from the proof of Theorem~\ref{limit1}.
\begin{proposition}
Let $\alpha$ be such that \eqref{pinnthresh} holds. Then there exists a unique minimizing movement of \eqref{MM-scheme} $E^\alpha(t)=\alpha\mathcal{L}t$ where $\mathcal{L}$ is the line segment above.
%
\end{proposition}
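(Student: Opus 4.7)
My plan is to follow the blueprint of Theorem~\ref{limit1}, taking as granted the discrete characterization \eqref{eq:linek} that has just been announced. The argument splits into two parts: (i) establishing uniqueness and the explicit Minkowski-sum form of the discrete trajectory $\{E_\alpha^k\}$ of the rescaled scheme \eqref{MM-scheme2}, and (ii) passing to the Hausdorff limit to recover $E^\alpha(t)=\alpha t\,\mathcal{L}$.

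For (i), I would argue inductively on $k$, mimicking the proof of Theorem~\ref{thm:nucleation}. At each step Lemma~\ref{lemball} confines the new admissible centers to the $\varphi$-ball of radius $4/\alpha$ around $Z(E_\alpha^{k-1})$; under assumption \eqref{pinnthresh} the only integer points $\iv\neq(0,0)$ with $\alpha\varphi(\iv)\le 4$ lie on the diagonal $\pm(1,1)$, since $\alpha>4/\sqrt{a_{11}}=4/\varphi(\pm1,0)=4/\varphi(0,\pm1)$ excludes the axis directions and $\varphi(\pm1,\mp1)^2=2(a_{11}-a_{12})>a_{11}$ (by the second inequality in \eqref{ellipassump}) excludes the antidiagonal, with higher-norm directions excluded \emph{a fortiori}. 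Because the squares $q(\iv)$ centered on the diagonal are pairwise non-adjacent, their contributions to $\mathcal{F}_\alpha^\varphi(\cdot,E_\alpha^{k-1})$ decouple, and each admissible diagonal point gives a strictly negative contribution (since $\alpha\notin\Lambda^\varphi$); hence such points are uniquely included, yielding $Z(E_\alpha^k)=Z(E_\alpha^{k-1})+\mN_\alpha^\varphi=\{(j,j):|j|\le k\}$.

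For (ii), I would pass to the limit exactly as in Theorem~\ref{limit1}. By the scaling $E_{\e,\tau}^k=\e E_\alpha^k$ and the choice $\e=\alpha\tau$, the discrete orbit \eqref{piece} is
\[
E_{\e,\tau}(t)=\bigcup_{|j|\le\lfloor t/\tau\rfloor} q_\e\big(\e(j,j)\big),
\]
whose convex hull of centers is $\e\lfloor t/\tau\rfloor\,\mathcal{L}$. Using $d_{\mathcal{H}}\bigl(F,\mathrm{conv}(Z_\e(F))\bigr)<\sqrt{2}\,\e$ for $F\in\D_\e$ and the fact that $\e\lfloor t/\tau\rfloor\to\alpha t$ as $\tau\to 0$, I obtain
\[
d_{\mathcal{H}}\bigl(E_{\e,\tau}(t),\alpha t\,\mathcal{L}\bigr)\le \sqrt{2}\,\e+\alpha\Bigl(t-\tfrac{\e}{\alpha}\lfloor t/\tau\rfloor\Bigr)\xrightarrow[\e\to 0]{}0
\]
for every $t\ge 0$, which identifies $E^\alpha(t)=\alpha t\,\mathcal{L}$; uniqueness of the minimizing movement is inherited from uniqueness of the discrete trajectory established in (i).

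The main obstacle is step (i), since neither Proposition~\ref{steps} nor Theorem~\ref{thm:nucleation} is directly applicable here: the norm $\varphi$ is non-absolute, and the discrete iterates $E_\alpha^k$ are one-dimensional, thus violating the non-degeneracy condition \eqref{non-degeneracy}. Fortunately, the very anisotropy responsible for this degeneracy also trivializes the geometric difficulty, because it forces $\mN_\alpha^\varphi$ to concentrate on a single diagonal direction with non-adjacent representatives; the covering and four-point submodularity arguments of Section~\ref{sec:proofsteps} can therefore be bypassed in favor of the direct per-square bookkeeping sketched above.
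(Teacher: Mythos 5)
Your proof is correct and follows essentially the same route as the paper: the paper likewise obtains the proposition by first establishing the discrete characterization \eqref{eq:line1}--\eqref{eq:linek} through direct inspection of the $\varphi$-ball and an induction on $k$ (explicitly noting, as you do, that no covering argument and no assumption \eqref{monotone-edges} are needed because the diagonal squares decouple), and then invoking verbatim the Hausdorff-limit computation from the proof of Theorem~\ref{limit1}. The one point you leave at the same level of rigor as the paper --- which simply asserts \eqref{eq:line1} --- is that excluding the diagonal points $(j,j)$ with $|j|\ge 2$ from the nucleus is not literally ``a fortiori'': it requires $2\varphi(1,1)>4/\alpha$, which under \eqref{pinnthresh} follows only when $8(a_{11}+a_{12})\ge a_{11}$ (true for the displayed example $a_{11}=2$, $a_{12}=-\frac{5}{3}$, but not for every matrix admitted by \eqref{ellipassump}).
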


\end{example}

\subsection{Further results and conjectures}\label{sec:conjectures}

In this section we focus on the non-trivial issue of addressing our problem without the monotonicity constraint. If on the one hand in the case of the $\ell^\infty$-norm (Example~\ref{ex:linfty}), the monotonicity constraint did not play any role, on the other hand in Example~\ref{ex:l1-norm} we proved that the first step of the unconstrained scheme \eqref{MM-scheme-l1} for the $\ell^1$-norm can be either an even or an odd checkerboard set. The idea of the proof was to follow the argument of Proposition~\ref{firststep}, replacing, when using the $2\times2$-square coverings, the family $\mathcal{S}_e(E)$ with $\mathcal{S}_o(E)$ defined in \eqref{eq:oddcovering} 
in the case $E\not\supset q$. This approach works for every absolute norm $\varphi$. Therefore, when removing the monotonicity constraint in the minimization scheme, we find the following generalization of Proposition~\ref{firststep}.

\begin{proposition}\label{firststep-gen}
Let $\varphi$ be an absolute norm, let $\alpha>0$ be such that $\alpha\not\in\Lambda^\varphi$ and let $\mathcal{F}_\alpha^\varphi$ be as in \eqref{scaled}.
Then the first minimization problem of scheme \eqref{MM-scheme-l1} admits the only solutions
\begin{equation*}
E_\alpha^1 = \argmin{E\in\D}\mathcal{F}_\alpha^\varphi(E,q)=
\begin{cases}
E(\Ze\cap B_{4\over\alpha}^\varphi)\in\A^e\,, & \mbox{ if } q\subset E^1_\alpha\,,\\
E(\Zo\cap B_{4\over\alpha}^\varphi)\in\A^o\,, & \mbox{ if } q\not\subset E^1_\alpha\,.
\end{cases}
\end{equation*}
\end{proposition}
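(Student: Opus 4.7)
The plan is to mirror Proposition~\ref{firststep} via a dichotomy on whether the minimizer $E_\alpha^1$ contains the origin square $q$. By Lemma~\ref{lemball} I may restrict competitors $E\in\D$ to those with $Z(E)\subset E_{\alpha,\varphi}:=\Z^2\cap B_{4/\alpha}^\varphi$.

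If $q\subset E_\alpha^1$, Proposition~\ref{firststep} applies verbatim and yields $E_\alpha^1 = E(\Ze\cap B_{4/\alpha}^\varphi)$. The non-trivial case is $q\not\subset E_\alpha^1$; there I would replace the even covering $\mathcal{S}_e(E_{\alpha,\varphi})$ with its odd counterpart $\mathcal{S}_o(E_{\alpha,\varphi})$ from \eqref{eq:oddcovering}. Together with the axial strip $\mathcal{C}_0^o:=\bigcup_{i_1\neq 0}q(i_1,0)$ left uncovered by $\mathcal{S}_o$, this provides a decomposition of $E_{\alpha,\varphi}\setminus q$ on which the energy can be localized.

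Within each tile $Q(\jv)\in\mathcal{S}_o(E_{\alpha,\varphi})$, with $\jv\in\Zo$ (so $j_1$ even and $j_2$ odd), the local analysis faithfully transposes steps (a)--(b) of the proof of Proposition~\ref{firststep}: de-clustering any pair of nearest-neighbour squares strictly lowers the energy, so one reduces to checkerboard configurations inside $Q(\jv)$; then the submodularity inequality \eqref{normestimate} applied at $\iv=\jv$ reads
\begin{equation*}
\varphi(\jv) + \varphi(\jv + \ev_\jv^3) \le \varphi(\jv + \ev_\jv^1) + \varphi(\jv + \ev_\jv^2),
\end{equation*}
which now selects the diagonal sitting in $\Zo$ as the locally optimal one. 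On the strip $\mathcal{C}_0^o$ a one-dimensional comparison (as in step (c) of Proposition~\ref{firststep}), accounting for the dissipation contribution $\alpha\min_{\jv\neq \mathbf{0}}\varphi(\jv)$ paid by the vacancy at the origin, selects the odd checkerboard. Summing these local inequalities via a superadditivity estimate analogous to \eqref{subaddmin} I conclude $E_\alpha^1 = E(\Zo\cap B_{4/\alpha}^\varphi)$.

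The hard part will be the bookkeeping near the coordinate axes: the tiles $Q(\jv)$ with $j_1=0$ degenerate to $1\times 2$ rectangles (as $\ev_\jv^1=\mathbf{0}$), and one must verify that the submodularity argument still delivers the odd configuration as the strict local minimum inside these degenerate tiles. Handling them jointly with the separate strip $\mathcal{C}_0^o$, while properly accounting for the boundary interactions and the dissipation penalty from the missing square at the origin, is where the argument requires most care.
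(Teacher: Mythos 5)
Your proposal matches the paper's intended argument: the paper itself does not write out a detailed proof of Proposition~\ref{firststep-gen}, but states (in Example~\ref{ex:l1-norm} and Section~\ref{sec:conjectures}) that it follows by splitting according to whether $q\subset E_\alpha^1$ and rerunning the localization of Proposition~\ref{firststep} with $\mathcal{S}_o$ in place of $\mathcal{S}_e$ in the case $E\not\supset q$ — exactly your dichotomy, with \eqref{normestimate} now selecting the $\Zo$-diagonal and with the extra dissipation term $\alpha\min_{\jv\neq\mathbf{0}}\varphi(\jv)$ for the vacancy at the origin. You also correctly flag the genuine delicate point (the degenerate tiles and the uncovered axial strips left by $\mathcal{S}_o$), which the paper glosses over as well.
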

At this point, we are forced to depart from Example~\ref{ex:l1-norm} for the determination of the sets $E_\alpha^k$, $k\geq2$, as the delicate construction of a covering needed in the proof of Proposition~\ref{steps} strongly relies on the monotonicity constraint on the discrete evolution and thence is no longer enough to infer an analogous result for the subsequent steps of the evolution. The investigation of this issue has therefore to be deferred to further contributions. Anyway, motivated by the previous ``positive'' examples, we do believe that under suitable assumptions on the norm $\varphi$ and the geometry of the competitors in the minimization problem one can still infer a (checkerboard) structure result as in Proposition~\ref{steps} and a characterization by means of Minkowski sums, analogous to that of Theorem~\ref{thm:nucleation}. Within this scenario, oscillations of the minimizers between checkerboards of different parity, in principle, cannot be excluded. However, energetic considerations suggest that these may occur only for a finite number of steps, depending on $\alpha$: heuristically, a change of parity at step $k$ involves a variation of the perimeter term of order $k$ which cannot match, for $k$ large, the corresponding increasing of the bulk term of order $k^2$.
In order to see this we may assume, without loss of generality, that $Z(E_\alpha^{k+1})=(k+1)\mathcal{N}_\alpha^\varphi\subset\Z_o^2$ and $Z(E_\alpha^{k})=k\mathcal{N}_\alpha^\varphi\subset\Z_e^2$ for some $k\geq1$, as an interchanging of the parity of the sets would provide an analogous estimate. 
Then, by virtue of \eqref{pick}--\eqref{counting}, the variation of the energy $\mathcal{F}_\alpha^\varphi$ from an even checkerboard $E_\alpha^k$ to the odd one $E_\alpha^{k+1}$ is bounded from below by 
\begin{equation}
\begin{split}
&-4\left(\#Z(E_\alpha^{k+1})-\#Z(E_\alpha^{k})\right) + \alpha \min\{\varphi(1,0),\varphi(0,1)\} \left(\#Z(E_\alpha^{k})+\#Z(E_\alpha^{k+1})\right)\\
& =-4\#((k+1)\mathcal{N}_\alpha^\varphi\cap\Z_o^2) + 4\#(k\mathcal{N}_\alpha^\varphi\cap\Z_e^2) + \alpha \bigl(\#(k\mathcal{N}_\alpha^\varphi\cap\Z_e^2)+\#((k+1)\mathcal{N}_\alpha^\varphi\cap\Z_o^2)\bigr)\\
& \geq -4\#((k+1)\mathcal{N}_\alpha^\varphi\cap\Z_o^2) + 4\#(k\mathcal{N}_\alpha^\varphi\cap\Z_e^2) + \alpha \#(k\mathcal{N}_\alpha^\varphi\cap\Z^2) \\
& = \alpha |{\rm conv}(\mathcal{N}_\alpha^\varphi)| k^2 + C'_\alpha k + C''_\alpha\,.
\end{split}
\label{rough-estim}
\end{equation}
Thus, there exists $k_\alpha:=k(\alpha)$ such that the right-hand side in \eqref{rough-estim} is positive for $k\geq k_\alpha$. As a consequence, the change of parity is not energetically favorable (definitely in $k$), and we expect either $E_\alpha^k\in\A^e_{\rm conv}$ or $E_\alpha^k\in\A^o_{\rm conv}$ for every $k\geq k_\alpha$ to hold as a result of iterated Minkowski sums with the even nucleus $\mN_\alpha^\varphi$ of \eqref{MM-scheme-unc}. In conclusion, since a finite number of oscillations is neglected in the limit, an analogous characterization of the limit evolution as in Theorem~\ref{limit1} holds.

We summarize our conjecture as follows.

\begin{conjecture}
Under suitable assumptions on $\varphi$ and for suitable values of $\alpha$, the discrete solutions $\{E^k\}$ of scheme \eqref{MM-scheme-unc} satisfy
\begin{equation*}\label{even-steps-conj}
\mbox{ either\,\, } Z(E_\alpha^k)=\Big\{\iv\in\Ze \,:\, d^\varphi(\iv,E_\alpha^{k-1})<\frac{4}{\alpha}\Big\} \mbox{ \,\, or\,\, }Z(E_\alpha^k)=\Big\{\iv\in\Zo \,:\, d^\varphi(\iv,E_\alpha^{k-1})<\frac{4}{\alpha}\Big\}\,.
\end{equation*}
Moreover, there exists an index $k_\alpha\in\N$ such that
$$
Z(E^k_\alpha)=Z(E^{k_\alpha}_\alpha)+\underbrace{\mN_\alpha^\varphi+\dots+\mN_\alpha^\varphi}_{(k-k_\alpha)\text{-times}}, \quad \text{for every } k\ge k_\alpha\,.
$$
As for the limit evolution, there exists a unique minimizing movement $E:[0,+\infty)\to\mathcal{X}$ for scheme \eqref{MM-scheme-unc} defined by
$E(t) = v_\alpha^\varphi \, t \, P_\alpha^\varphi$ for every $t\ge0$,
where $P_\alpha^\varphi$ and $v_\alpha^\varphi$ are as in the statement of Theorem {\rm \ref{limit1}}.
\end{conjecture}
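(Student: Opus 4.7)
The plan is to couple a suitable generalization of the covering argument of Proposition \ref{steps} to both checkerboard parities with the energetic estimate \eqref{rough-estim}. The first minimization step is already resolved by Proposition \ref{firststep-gen} via a parity dichotomy, so the main task is to propagate a checkerboard structure (of either parity) through the iteration and then to exploit \eqref{rough-estim} to show that parity must stabilize after finitely many steps.

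Proceeding by induction on $k$, suppose that $E_\alpha^{k-1}$ is a convex checkerboard set of some parity. By Lemma \ref{lemball} any minimizer $E_\alpha^k$ is contained in the $4/\alpha$-tubular neighborhood of $E_\alpha^{k-1}$; moreover, I would first rule out the removal of any square $q(\iv) \subset E_\alpha^{k-1}$, since each such square is isolated in the checkerboard and its removal produces a perimeter loss $+4$ together with a nonnegative dissipation penalty $\alpha d^\varphi(\iv,\partial E_\alpha^{k-1})$. Next, I would build a covering of $\{\iv : 0 < d^\varphi(\iv, E_\alpha^{k-1}) < 4/\alpha\}$ using tiles $Q(\iv)$, $R^{\rm hor}(\iv)$, $R^{\rm ver}(\iv)$, $R^\pm(\iv)$ placed consistently with the parity of $E_\alpha^{k-1}$ (for an odd $E_\alpha^{k-1}$, the entire covering of Section \ref{sec:proofsteps} gets shifted by $(1,0)$ and is handled symmetrically). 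Applying the submodularity-type inequality \eqref{normestimate}, or \eqref{rect} near the effective boundary, in each tile then forces the added squares to form a checkerboard of the \emph{same} parity as $E_\alpha^{k-1}$. Finally a single global comparison between the two parity candidates, analogous to the computation \eqref{vareven}--\eqref{varodd} of Example \ref{ex:l1-norm}, selects a unique minimizer.

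Once each $E_\alpha^k$ has been shown to be a convex checkerboard, the estimate \eqref{rough-estim} becomes decisive: a change of parity at step $k$ induces a positive variation of order $\alpha|\conv(\mN_\alpha^\varphi)|\,k^2+\mathcal{O}(k)$, so flips are forbidden beyond some index $k_\alpha=k(\alpha)$. For $k\ge k_\alpha$ the structural formula of the conjecture, combined with Proposition \ref{prop:iden} as in the proof of Theorem \ref{thm:nucleation}, yields the iterated Minkowski-sum representation $Z(E_\alpha^k)=Z(E_\alpha^{k_\alpha})+(k-k_\alpha)\mN_\alpha^\varphi$; here I use that $\mN_\alpha^\varphi\subset\Ze$ so that Minkowski summation with it preserves parity. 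Finally, the scaling argument already employed in the proof of Theorem \ref{limit1} produces the limit evolution $E(t)=v_\alpha^\varphi\,t\,P_\alpha^\varphi$, the finite initial phase $k<k_\alpha$ contributing a vanishing error of order $\e k_\alpha$ in the rescaling $\e=\alpha\tau$ as $\e\to 0$.

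The main obstacle is precisely the inductive step that propagates the checkerboard structure from $E_\alpha^{k-1}$ to $E_\alpha^k$ without the monotonicity constraint. The covering construction of Section \ref{sec:proofsteps} heavily exploits $E\supset E_\alpha^{k-1}$, since only the outer region is tiled and the projection $\pi_{E_\alpha^{k-1}}^\varphi$ is used only there. Without monotonicity one must also rule out \emph{mixed} configurations, in which the minimizer removes a portion of $E_\alpha^{k-1}$ along the effective boundary and replaces it with a shifted-parity piece; controlling the cross-interaction between removed and added squares across $\partial^{\rm eff}E_\alpha^{k-1}$ may require either strengthening \eqref{normestimate} to a strict submodularity assumption on $\varphi$, or performing a global perimeter-versus-dissipation trade along the entire boundary of the symmetric difference. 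Resolving this point should also identify the precise ``suitable assumptions'' on $\varphi$ and $\alpha$ under which the conjecture holds.
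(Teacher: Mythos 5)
This statement is left as a \emph{conjecture} in the paper: the authors give only the heuristic energy estimate \eqref{rough-estim} and explicitly defer the proof to future work, so there is no paper proof to compare against. Your proposal reproduces their intended strategy almost verbatim (Proposition~\ref{firststep-gen} for the first step, a parity-adapted covering for the inductive step, the quadratic-versus-linear comparison \eqref{rough-estim} to freeze the parity after finitely many steps, then Proposition~\ref{prop:iden} and the scaling of Theorem~\ref{limit1} for the limit motion), but it does not close the one step on which everything hinges, and you say so yourself in the final paragraph.

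The gap is the inductive propagation of the checkerboard structure without the monotonicity constraint, and it is genuine rather than cosmetic. The covering of Section~\ref{sec:proofsteps} tiles only the region \emph{outside} $E_\alpha^{k-1}$ and localizes the energy there; this is meaningful only because the constraint $E\supset E_\alpha^{k-1}$ guarantees that the competitor agrees with $E_\alpha^{k-1}$ on the inside. Without the constraint, your square-by-square argument that removal is never convenient (perimeter loss $4$ plus dissipation $\alpha d^\varphi\ge\alpha\varphi_{\min}$) is valid only for a square that remains isolated in the competitor; a competitor that simultaneously removes $q(\iv)$, $\iv\in Z(E_\alpha^{k-1})$, and adds neighboring squares of the opposite parity pays far less than $4$ in perimeter for the removal, because the perimeter is not additive across adjacent squares. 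Ruling out these mixed configurations requires controlling the cross-interaction of removed and added squares across $\partial^{\rm eff}E_\alpha^{k-1}$, which neither \eqref{normestimate} nor \eqref{rect} addresses; moreover \eqref{normestimate} can degenerate to an equality (e.g.\ for $\varphi=\|\cdot\|_1$), so the local comparisons do not even strictly select a parity inside each tile. Until this step is supplied, together with the precise hypotheses on $\varphi$ and $\alpha$ under which it works, the argument remains an outline of the conjecture rather than a proof of it.
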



\section*{Acknowledgements}

A. Braides acknowledges the MIUR Excellence Department Project awarded to the Department of Mathematics, University of Rome Tor Vergata, CUP E83C18000100006.
G. Scilla has been supported by the Italian Ministry of Education, University and Research through the Project “Variational methods for stationary and evolution problems with singularities and interfaces” (PRIN 2017).

\Addresses

\end{document}